\newcommand{\Arrow}[1]{%
	\parbox{#1}{\tikz{\draw[->](0,0)--(#1,0);}}
}
\newcommand{\rsss}{\rotatebox[]{90}{$\boxminus$}\kern-0.7em{\mathrel{\raisebox{.2ex}{\Arrow{.35cm}}}}\!\!}
\newcommand{\csss}{\text{$\boxminus\kern-0.655em{\mathrel{\raisebox{-.2ex}{\rotatebox[]{-90}{\Arrow{.34cm}}}}}$\ }}
\newtheorem{remark}[theorem]{ Remark}
\newtheorem{exam}[theorem]{\bf Example}
\newtheorem{observation}[theorem]{ Observation}
\newcommand{\ba}{\begin{array}}
\newcommand{\ea}{\end{array}}
\newcommand{\be}{\begin{equation}}
\newcommand{\ee}{\end{equation}}
\newcommand{\beano}{\begin{eqnarray*}}
\newcommand{\eeano}{\end{eqnarray*}}
\def\C{{\mathbb C}}
\def\G{{\mathbb G}}
\def\lam{\lambda}
\def\diag{\mathrm{diag}}
\def\rank{\mathrm{rank}}
\def \nrank{\mathrm{nrk}}
\def \sp{\mathrm{Sp}}
\title{Vector spaces of linearizations for multivariable state-space systems
 }
\author{Avisek Bist \thanks{Department of Mathematics, Sikkim University, Sikkim-737102, India, ({\tt avisek.bista@gmail.com}).} \and  Namita Behera \thanks{Department of Mathematics, Sikkim University, Sikkim-737102, India, ({\tt nbehera@cus.ac.in}, niku.namita@gmail.com). } 
%\and Ranjan Kumar Das \thanks{Department of Mathematics, }
}
\begin{document}

\maketitle

\begin{abstract}
Consider a multivariable state space system and associated transfer function $G(\lam).$ The
aim of this paper is to define and analyze two vector spaces
of matrix pencils associated with the matrix $G(\lam)$ and show that almost all of these pencils are  linearizations of $G(\lam).$ We also construct symmetric/Hermitian linearizations of $G(\lam)$ when $G(\lam)$ is regular and symmetric/Hermitian.
\end{abstract}

\begin{keywords} Multivariable state-space  system, System matrix, transfer function, matrix polynomial, eigenvalue,  eigenvector,  minimal realization,  matrix pencil, linearization.
\end{keywords}

\begin{AMS}
65F15, 15A57, 15A18, 15B57, 15A22, 65F35
\end{AMS}
	
\section{Introduction}
Consider a matrix polynomial $A(\lam) =\sum_{j=0}^{m} \lam^{j}A_j , \,\, A_j \in \C^{n \times n}$. Then  a matrix polynomial $A(\lam)$ is said to be regular if $\det(A(\lam)) \neq 0$ for some $\lam \in \C$. Linearization is a standard method for solving polynomial eigenvalue problems $A(\lam) x = 0$,  see~\cite{gohberg82,mmmm06,AV04,TDM} and references therein. Let $ A(\lam)$ be an $n\times n$ matrix polynomial (regular or singular) of degree $m.$ Then an $ mn\times mn$  matrix pencil $L(\lam) := X+ \lam Y$ is said to be a {\em linearization}~\cite{gohberg82} of  $A(\lam)$ if there are $mn\times mn$ unimodular matrix polynomials $U(\lam)$ (the determinant of $U(\lam),$ is a nonzero constant for all $\lam \in \mathbb{C}.$) and $V(\lam)$ such that $$  U(\lam) L(\lam) V(\lam) = \diag(I_{(m-1)n}, \,\, A(\lam)) $$   for all $\lam \in \C,$ where $I_k$ denotes the $k\times k$ identity matrix. 

% Linearization is a standard method for computing eigenvalues, eigenvectors, minimal bases and minimal indices of a matrix polynomial and has been studied extensively
% over the years,. 

There are two important class of vector space of linearizations of matrix polynomial which have been studied in \cite{mmmm06}. Recently, the idea presented in \cite{mmmm06} has been used in \cite{DA19} to study affine spaces of linearizations for LTI state-space system and associated transfer function.  In \cite{beh22, BehB22} Fiedler linearizations and recovery of eigenvectors of multivariable state-space systems have been studied extensively. One of the main objectives of this paper is to expand the arena in which to look for linearizations of multivariable state-space systems. This would provide us with more options to choose a linearization that possesses additional properties such as symmetric or Hermitian structure whenever $G(\lambda)$ is symmetric or Hermitian. To that end, we
construct two vector spaces of matrix pencils almost all of which are linearizations of $G(\lambda)$.

Now, consider a \textit{linear time invariant} (LTI) multivariable system $\Sigma$ given by

\begin{equation} \label{mulsss}
\begin{aligned}
	A\left( \frac{d}{dt}\right)x(t) &= B\left(\frac{d}{dt}\right)u(t) \\
	y(t)&= C\left( \frac{D}{dt}\right)x(t) + D\left( \frac{d}{dt}\right)u(t), \,\,\,\,\ t \geq 0,
\end{aligned}
\end{equation}
where $\frac{d}{dt}$ is the differential operator, $A(\lambda) \in \mathbb{C}[\lambda]^{n\times n}$ is regular, $B(\lambda) \in \mathbb{C}[\lambda]^{n\times m}$, $C(\lambda) \in \mathbb{C}[\lambda]^{p\times n}, D(\lambda) \in \mathbb{C}^{p\times m}, u(t): \mathbb{R}^+ \rightarrow \mathbb{R}^m$ is the output vector, $x(t): \mathbb{R}^+ \rightarrow \mathbb{R}^n$ is the state vector $y(t): \mathbb{R}^+ \rightarrow \mathbb{R}^p$ is the output vector.

Then the matrix polynomial $\mathcal{S}(\lambda)$ given by 

\begin{equation}
\mathcal{S}(\lambda) = \left[ \begin{array}{c|c} A(\lambda) & - B(\lambda) \\ \hline C(\lambda) & D(\lambda) \end{array} \right] \in \mathbb{C}[\lambda]^{(n+p)\times (n+m)} \label{rsm}  
\end{equation}
is called the \textit{Rosenbrock system matrix} or the \textit{Rosenbrock system polynomial} or simply called system matrix of the system $\Sigma$. The rational matrix $G(\lambda)$ given by
\begin{equation} \label{mtf}
 G(\lambda) = D(\lambda) + C(\lambda) A(\lambda)^{-1} B(\lambda) \in \mathbb{C}(\lambda)^{p\times m}     
\end{equation}
is called the transfer function of the system $\Sigma$.

The system $\Sigma$ is said to be in \textit{State Space Form} if it is given by

\begin{equation}\label{ssf} 
	\begin{aligned} 
		E\dot{x}(t) &= Ax(t) + Bu(t) \\
		y(t) &= Cx(t) + D(\lambda) u(t),
	\end{aligned}
\end{equation}
where $D(\lambda) \in \mathbb{C}[\lambda]^{p\times m}$ is a matrix polynomial and $A, E \in \mathbb{C}^{n\times n}$ with $E$ being non-singular and $B \in \mathbb{C}^{n \times m}, C \in \mathbb{C}^{p\times n}$ are constant matrices. 
%We denote the system defined above by $(E, A, B, C, A(\lambda))$. 
For computing zeros of a linear time-invariant system $\Sigma$ in state-space form given in (\ref{ssf}), Fiedler-like pencils and Rosenbrock linearization of the Rosenbrock system polynomial $\mathcal{S}(\lam)$ associated with $\Sigma$ have been introduced. Also, it has been shown that the Fiedler-like pencils are Rosenbrock linearizations of the system polynomial $\mathcal{S}(\lam)$, see \cite{rafinami1,rafinami2,rafinami3,behera20}. 

Further, for the Rosenbrock system matrix $\mathcal{S}(\lambda)$ given in (\ref{rsm}) associated to the  multivariable linear time invariant (LTI) state-space system $\Sigma$ given in (\ref{mulsss}),  recently, Fiedler-like pencils and its linearizations of $\mathcal{S}(\lambda)$ for both square and rectangular case have been introduced in \cite{beh22, BehB22, BBM23} to study the eigenvalues of $\mathcal{S}(\lambda)$.  
%Other than this in \cite{DopMQV20} the multivariable state-space system is considerd and a definition for local linearizations of rational matrices associated to $\Sigma$ is presented. This definition allows  to introduce matrix pencils associated to a rational matrix function that preserve its structure of zeros and poles in subsets of any algebraically closed field and also at infinity.

%For rational eigenvalue problems, in \cite{DopMQV20} the problem is considered as a multivariable state space system and a definition for local linearizations of rational matrices is presented. This definition allows  to introduce matrix pencils associated to a rational matrix function that preserve its structure of zeros and poles in subsets of any algebraically closed field and also at infinity.
Furthermore, in  \cite{AmpDMZ18, DMQD23, PerQ22}, different kind of linearizations of  the system matrix  $\mathcal{S}(\lambda)$ in  (\ref{rsm}) were studied. In \cite{AmpDMZ18}   strong linearizations of arbitrary rational matrices are discussed,  as well as characterizations of  linear matrix pencils for general rational matrices which are explicitly constructed from a minimal state-space realization of   a rational matrix. 
%For these linearizations of arbitrary rational matrices also the  recovery of eigenvectors has been analyzed when  $S(\lambda)$ is regular, as well as minimal bases and minimal indices, when  $S(\lambda)$ is singular, see \cite{AmpDMZ18,AmpDMZ21}. 
Here we assume for simplicity that $B, C$  are constant matrices in $\lambda$. However, all the results can be extended  to the case that $B, C$ depend on $\lambda$.
In \cite{DMQD23}  a new family of linearizations of rational matrices,  called block full rank linearizations has been studied. But it is not clear from \cite{AmpDMZ18, DMQD23} whether the pencils of  $\mathcal{S}(\lambda)$ which are studied in this paper are block permutationally similar to block minimal pencils.  Therefore,  we study explicitly the vector space  linearizations of system matrices/rational matrices associated to multivariable state-space systems.
In this paper, we construct two vector  spaces of matrix pencils almost all of which are linearizations of $\mathcal{S}(\lambda)$ or $G(\lambda)$ associated to the system  $\Sigma$.

%Recently, in \cite{PerQ22, AmpDMZ18, DMQD23, DMQD22, DopMQV20}, different linearizations of  the system polynomial $S(\lambda)$ in (\ref{rsm}) were studied. For these linearizations of arbitrary rational matrices the  recovery of eigenvectors has been analyzed when $S(\lambda)$ is regular, as well as minimal bases and minimal indices, when $S(\lambda)$ is singular, see \cite{AmpDMZ18,AmpDMZ21}. 

The paper is organized as follows.  In section~2 we recall the definition and some properties of matrix polynomial which we need throughout this paper. In section~3  we construct two vector spaces of matrix pencils almost all of which are linearizations of system matrix $\mathcal{S}(\lambda)$. In section~5 we discuss the vector space linerizations of $G(\lambda)$ given in (\ref{mtf}). In section~5 we define structure preserving linerizations (symmetric/ Hermitian) of $G(\lambda)$. In Section~6 we consider a linearization $L(\lambda)$ of $A(\lambda), D(\lambda)$ via a non-monomial basis
$\{\phi_{0}(\lambda), \phi_{1}(\lambda), \ldots, \phi_{m-1}(\lambda)\}$ of the space of scalar polynomials of degree at most $m-1, k-1$, respectively 
and then construct a corresponding linearization $\mathbb{L}(\lambda)$ of $G(\lambda)$ associated to the system  $\Sigma$.

{\bf Notation.} An $m\times n$ rational matrix function $G(\lam)$ is an $m\times n$ matrix whose entries are rational functions of the form $\frac{p(\lam)}{q(\lam)},$ where $p(\lam)$ and $q(\lam)$ are scalar polynomials in $\C[\lam].$  We denote  the $j$-th column of the $n \times n$ identity matrix $I_n$ by $e_j$ and the transpose of a matrix $A$ by $A^T.$ We denote the Kronecker product of matrices $A$ and $B$ by $A \otimes B.$

\section{Basic results}
The normal rank~\cite{rosenbrock70} of a rational matrix$ G(\lam) \in \C(\lam)^{m\times n}$ is denoted by $ \nrank(G)$ and is given by $\nrank(G) := \max_{\lam}\rank(G(\lam))$ where the maximum is taken over all $\lam \in \C$ which are not poles of the entries of $G(\lam).$ If $\nrank(G) =n = m$ then $G(\lam)$ is said to be regular, otherwise $G(\lam)$ is said to be singular. If $G(\lam)$ is regular then $\mu \in \C$ is said to be an {\em eigenvalue} of $G(\lam)$ if $ \rank(G(\mu)) < \nrank(G).$ See \cite{rafinami1} for more on eigenvalues and zeros of $G(\lambda)$.

Let $G(\lam)\in \mathbb{C}(\lam)^{p \times m}$ be a rational matrix with normal rank $k$ and let

\begin{equation*}\label{smform} 
\mathbf{SM}(G(\lam)) = \diag\left( \frac{\phi_{1}(\lam)}{\psi_{1}(\lam)}, \cdots, \frac{\phi_{k}(\lam)}{\psi_{k}(\lam)}, 0_{p-k, m-k}\right)    
\end{equation*} 
be the Smith-McMillan form~\cite{rosenbrock70} of  $G(\lam),$ where the scalar polynomials $\phi_{i}(\lam)$ and $\psi_{i}(\lam)$ are monic, are pairwise coprime and, $\phi_{i}(\lam)$ divides $\phi_{i+1}(\lam)$ and $\psi_{i+1}(\lam)$ divides $\psi_{i}(\lam),$ for $i= 1, 2, \ldots, k-1$. The polynomials $\phi_1(\lam), \ldots, \phi_k(\lam)$ and $\psi_1(\lam), \ldots, \psi_k(\lam)$ are called {\em invariant zero polynomials} and {\em invariant pole polynomials} of $G(\lam),$ respectively. Define
$\phi_{G}(\lam) := \prod _{j=1}^{k} \phi_{j}(\lam) \,\,\, \mbox{ and } \,\,\, \psi_{G}(\lam) := \prod _{j=1}^{k} \psi_{j}(\lam).$ A complex number $\lam$ is said to be a  zero of $G(\lam)$ if $ \phi_G(\lam) =0$ and a complex number $ \lam$ is said to be a pole of $G(\lam)$ if $\psi_G(\lam) =0.$ The {\bf spectrum} $\sp(G)$ of $G(\lam)$ is given by  $\sp(G) :=\{\lam \in \C : \phi_G(\lam) = 0\}.$ That is $\sp(G)$ is the set of zeros of $G(\lam),$ see \cite{rafinami1}.

A complex number $\lambda$ is said to be an eigenvalue of the system $\mathcal{S}(\lambda)$ if $\rank(\mathcal{S}(\lambda)) < \nrank(\mathcal{S})$. Equivalently, $\lambda \in \mathbb{C}$ is an eigenvalue of $\mathcal{S}(\lambda) \Leftrightarrow \lambda$ is a root of the zero polynomial $\phi_s (\lambda)$ of $\mathcal{S}(\lambda)$ given by $\phi_s (\lambda) = \phi_1 (\lambda) \phi_2 (\lambda) \dots \phi_k (\lambda)$ where $\phi_i (\lambda)$ are the invariant polynomials of $\mathcal{S}(\lambda)$ for $i = 1, 2, \dots, k$. An eigenvalue $\lambda$ of $\mathcal{S}(\lambda)$ is called an \textbf{invariant zero} of the system $\Sigma$. The set of eigenvalues of $\mathcal{S}(\lambda)$ is denoted by $\sp(\mathcal{S})$. The zeros of $\phi_G (\lambda)$ are called the \textbf{transmission zeroes} of $\Sigma$. The zeroes of $\psi_G (\lambda)$ are called the \textbf{poles} of the system $\Sigma$.

For a matrix polynomial $A(\lambda) = \sum_{j=0}^{m} {\lambda^j A_j}\in\mathbb{C}[\lambda]^{n\times n}$, the pencils $C_1 (\lambda) = \lambda X_1 + Y_1 $ and $C_2 (\lambda) = \lambda X_2 + Y_2 $ respectively, are called the first and second companion pencils of $A(\lambda)$ where $ X_1 = X_2 = \diag(A_m, I_{(m-1)n})$ and,
	$$Y_1 = \left[ \begin{array}{ccccc} A_{m-1} & A_{m-2} & \cdots & A_1 & A_0 \\ -I_n & 0 & \cdots & 0 & 0 \\ \vdots & \vdots & \ddots & \vdots & \vdots \\ 0 & 0 & \cdots & -I_n & 0 \end{array} \right] \,\,\,\,\,\,\, Y_2 = \left[ \begin{array}{ccccc} A_{m-1} & -I_n & \cdots & 0 & 0 \\ A_{m-2} & 0 & \cdots & 0 & 0 \\ \vdots & \vdots & \ddots & \vdots \\ A_1 & 0 & \cdots & 0 & -I_n \\ A_0 & 0 & \cdots & 0 & 0 \end{array}    \right]$$

Generalizing the companion pencils of $A(\lambda)$, two vector spaces $\mathcal{L}_1$ and $ \mathcal{L}_2 $ of $ mn\times mn $ matrix pencils $ L(\lambda) = \lambda X + Y $ are defined as, \cite{mmmm06}
	\begin{align}
		\mathcal{L}_1(A) &= \left\{ L(\lambda) : L(\lambda) \left( \Lambda \otimes I_n \right) = v \otimes P(\lambda), v(\lambda) \in \mathbb{C}^m \right\} \\ 	      
		\mathcal{L}_2(A) &= \left\{ L(\lambda) : \left( \Lambda^T \otimes I_n \right) L(\lambda) = v \otimes P(\lambda), v(\lambda) \in \mathbb{C}^m \right\}
	\end{align}
	
For convenience, we write $(L(\lambda), v) \in \mathcal{L}_1 (A)$ to mean that $L(\lambda) \in \mathcal{L}_1 (A)$ with right ansatz vector $v$. Similarly, we write $(L(\lambda), w) \in \mathcal{L}_2 (A)$ to mean that $L(\lambda) \in \mathcal{L}_2 (A)$ with left ansatz vector $w$. For any $n \times n$  matrix polynomial $A$ of degree $m$, the double ansatz space of $A$ is given by $ \mathcal{DL}(A) := \mathcal{L}_{1}(A) \cap \mathcal{L}_{2}(A)$, see \cite{mmmm06}.
	
\begin{definition} \cite{mmmm06}
Let $X$ and $Y$ be block $d\times d$ matrices

$$ X =\left[\begin{array}{ccc}
           X_{11} & \cdots & X_{1d} \\
           \vdots & \ddots & \vdots \\
           X_{d1}& \cdots& X_{dd}\\
      \end{array}\right],  \,\,\,\,\, 
  Y=\left[\begin{array}{ccc}
           Y_{11} & \cdots & Y_{d1} \\
           \vdots & \ddots & \vdots \\
           Y_{d1}& \cdots & Y_{dd}\\
      \end{array}\right]$$
with blocks $X_{ij},Y_{ij}\in\,\mathbb{C}^{n\times n}$. Then the column shifted sum  of $X$ and $Y$ is defined to be

$$X\boxplus Y:=\left[\begin{array}{cccc}
                   X_{11} & \cdots & X_{1d}& 0 \\
                   \vdots &  &  & \vdots\\
                   X_{d1}& \cdots & X_{dd}& 0\\
                \end{array}\right]  + 
                \left[\begin{array}{cccc}
                   0 & Y_{11} & \cdots & Y_{d1} \\
                   \vdots &  & & \vdots \\
                   0 & Y_{d1} & \cdots  & Y_{dd}\\
                   \end{array}\right],$$
and the \textbf{row shifted sum} of $X$ and $Y$ is defined to be

$$X\widehat {\boxplus }Y:=\left[\begin{array}{cccc}
                              X_{11} & \cdots & X_{1d}\\
                              \vdots &  &  \vdots \\
                              X_{d1}& \cdots & X_{dd}\\
                              0 & \cdots & 0  \\
                          \end{array}
                          \right]  + 
                          \left[\begin{array}{cccc}
                             0  & \cdots & 0 \\
                             Y_{11} & \cdots & Y_{d1} \\
                             \vdots &  & \vdots \\
                             Y_{d1}& \cdots & Y_{dd}\\
                         \end{array}\right],$$  
where the zero blocks are of size $n\times n$.    
\end{definition}

\section{Vector Space of Linearizations of Rosenbrock System Matrix}

Consider the \emph{Multivariable linear time invariant (LTI) state-space system} $\Sigma$ given by

\begin{equation}
    \begin{aligned}
       A\left(\frac{d}{dt}\right) x(t) &= B u(t), \\
       y(t) &= C x(t) + D\left(\frac{d}{dt}\right) u(t),
    \end{aligned} \label{hssf01}
\end{equation}
where $A(\lambda) = \sum_{j=0}^{m}\lambda^{j}A_j \in \mathbb{C}[\lambda]^{n \times n}$ is regular matrix polynomial of degree $m$ and $ D(\lambda) = \sum_{j=0}^{k}\lambda^{j}D_j \in \mathbb{C}[\lambda]^{r \times r}$ is a  matrix polynomial of degree $k$,  $ C \in \mathbb{C}^{r \times n}, B \in \mathbb{C}^{n \times r}$. The associated system matrix is given by

\begin{equation}\label{sysmatrix01}
\mathcal{S}(\lambda) = \left[\begin{array}{c|c}
                            A(\lambda) & -B \\ \hline
                            C & D (\lambda) \\
                       \end{array}\right] 
                       \in \mathbb{C}[\lambda]^{(n+r)\times (n+r)} 
\end{equation}
and the transfer function of $\Sigma$ is given by

\begin{equation}\label{tfunction01}
G(\lambda) = C A(\lambda)^{-1} B + D(\lambda) \in \mathbb{C}(\lambda)^{r \times r}. 
\end{equation}

The eigenvalue problem associated with the system matrix $\mathcal{S}(\lambda)$ given in (\ref{sysmatrix01}) is to find a vector $u\in\mathbb{C}^{n+r}$ such that $\mathcal{S}(\lambda)u=0$. If we write $u=\left[\begin{array}{c} x \\ \hline y\end{array}\right]$ where $x\in\mathbb{C}^{n}$ and $y\in\mathbb{C}^{r}$ then, it is easy to see that, 
\begin{align*}
    \mathcal{S}(\lambda)u =0
   \iff &\left[\begin{array}{c|c}
                            A(\lambda) & -B \\ \hline
                            C & D (\lambda) \\
                       \end{array}\right]
                       \left[\begin{array}{c}
                         x \\ \hline y
                       \end{array}\right]=0 \\
    \iff&\left[\begin{array}{c} A(\lambda)x-By \\ Cx+D(\lambda)y\end{array}\right]=0\\
    \iff &  A(\lambda)x-By=0 \,\ \text{and}\,\ Cx+D(\lambda)y=0
\end{align*}

Consider $G(\lambda)$ given in (\ref{tfunction01}). Then first companion linearization of $\mathcal{S}(\lambda)$ or $G(\lambda)$ are given by 

\begin{eqnarray*}
\mathcal{C}_{1}(\lambda) &= & \lambda \left[
                         \begin{array}{cccc|cccc}
                           A_{m} &  &  &  & & & & \\
                            & I_{n} &  &  & & & & \\
                            &  & \ddots &  & & & & \\
                            &  &  & I_{n} &  & & &\\
                            \hline
                            &  &  &  & D_k & & & \\
                            &  &  &  &  & I_r& & \\
                            &  &  &  &  & & \ddots & \\
                            &  &  &  &  & & & I_r \\
                         \end{array}
                       \right] \nonumber\\
                       && \quad+ \left[
                \begin{array}{cccc|cccc}
                  A_{m-1} & A_{m-2}& \cdots & A_{0}& 0& \cdots & 0& -B  \\
                   -I_{n} & 0 & \cdots & 0 & 0 & \cdots & & 0\\
                   & \ddots &  & \vdots  & \vdots & & & \vdots\\
                   &  & -I_{n} & 0 & 0 &  \cdots & & 0 \\
                   \hline
                 0 & \cdots & 0 & C &  D_{k-1} & D_{k-2} &\cdots & D_0 \\
                  & 0 & \cdots & 0 &  -I_r & 0 & \cdots & 0 \\
                  &  & \ddots & \vdots &   & \ddots & & \vdots\\
                  &  &  & 0 &   & & -I_r & 0 \\
                \end{array}
              \right]
\end{eqnarray*}

Now, for all $x \in \mathbb{C}^{n}$ and $y\in\mathbb{C}^{r}$, we have,

\begin{equation}\label{baseeqn01}
\mathcal{C}_{1}(\lambda) 
                        \left[ \begin{array}{c}
                         \Lambda_{m-1} \otimes x \\
                         \Lambda_{k-1} \otimes y
                         \end{array}\right]
                = \left[\begin{array}{c} A(\lambda)x-By \\0\\ \vdots\\ 0 \\\hline Cx+D(\lambda)y\\ 0 \\ \vdots\\ 0 \end{array}\right],
\end{equation}
where $\Lambda_{p-1}^{T}=\left[\begin{array}{cccc} \lambda^{p-1} & \ldots & \lambda & 1 \end{array}\right]^{T}$. Thus, any solution of (\ref{baseeqn01}) leads to the solution of the problem $\mathcal{S}(\lambda)u=0,$ where 
$u=\left[\begin{array}{c} x \\ y \end{array}\right]\in\mathbb{C}^{n+r}$.
Let us suppose $r<n$ and define

$$I_{r\times n}=\left[\begin{array}{ccccccc} 
                1 & 0 & \cdots & 0 & 0 & \cdots & 0 \\
                0 & 1 & \cdots & 0 & 0 & \cdots & 0  \\
                \vdots& \vdots & \ddots & \vdots & \vdots & \ddots & \vdots \\
                0 & 0 & \cdots & 1 & 0 & \cdots & 0
                \end{array}\right]$$
that is, $I_{r\times n}$ is the $r\times r$ identity matrix augmented with $n-r$ zero columns. Also, let us denote by $e^{p}_{q}$, the $q$-th column of the identity matrix $I_p$. Then, (\ref{baseeqn01}) is equivalent to  the identity 
$$\mathcal{C}_{1}(\lambda)\left[\begin{array}{c}
                         \Lambda_{m-1} \otimes I_n\\
                         \Lambda_{k-1} \otimes I_{r\times n}
                         \end{array}\right] 
                    = \left[\begin{array}{c}
                         A(\lambda)-BI_{r\times n} \\
                         \vdots \\
                         0 \\ \hline
                        C+D(\lambda)I_{r\times n}  \\
                         0 \\
                         \vdots \\
                         0 \\
                         \end{array}\right]
                    = \left[\begin{array}{c}
                         e^{m}_{1}\otimes \left(A(\lambda)-BI_{r\times n}\right) \\
                         \hline
                         e^{k}_{1} \otimes \left(C+D(\lambda)I_{r\times n}\right) \\
                          \end{array}
              \right].$$
If we generalize this to any arbitrary pencil then we consider the set of pencils $\mathbb{L}(\lambda) := \lambda \mathbb{X} + \mathbb{Y}$ satisfying the property

{\scriptsize
\begin{equation} \label{ransv}
\mathbb{L}(\lambda) \left[\begin{array}{c}
                         \Lambda_{m-1} \otimes I_{n}\\
                         \Lambda_{k-1} \otimes I_{r\times n}
                         \end{array}\right] 
                    = \mathbb{L}(\lambda)\left[\begin{array}{c} 
                         \lambda^{m-1} I_{n} \\
                         \vdots \\
                          \lambda I_{n} \\
                          I_{n} \\ \hline
                          \lambda^{k-1} I_{r\times n} \\
                          \vdots \\
                          \lambda I_{r\times n} \\
                          I_{r\times n}
                         \end{array}\right]
                    = \left[\begin{array}{c}
                         v_{1}\left(A(\lambda)-BI_{r\times n}\right)\\
                         \vdots \\
                         v_{m}\left(A(\lambda)-BI_{r\times n}\right)\\
                         \hline 
                        w_{1}\left(C+D(\lambda)I_{r\times n}\right) \\
                         \vdots \\
                         w_{k}\left(C+D(\lambda)I_{r\times n}\right)\\
                         \end{array}\right] 
                    = \left[\begin{array}{c}
                      v\otimes\left(A(\lambda)-BI_{r\times n}\right)\\\hline
                      w \otimes \left(C+D(\lambda)I_{r\times n}\right)
                      \end{array} \right]     
\end{equation}} 
for some vector $\left[
                         \begin{array}{c}
                         v\\
                         \hline
                          w 
                         \end{array}\right]\in \mathbb{C}^{m+k} $ 
such that $v \in \mathbb{C}^m $ and $w \in \mathbb{C}^k $. This set of pencils will be denoted by $\mathbb{L}_{1}(\mathcal{S})$ as a reminder that it generalizes the first companion form of $\mathcal{S}$.

Define  $$\Gamma_{\mathcal{S}} = \left\{\left[\begin{array}{c}
                      v\otimes\left(A(\lambda)-BI_{r\times n}\right)\\\hline
                      w \otimes \left(C+D(\lambda)I_{r\times n}\right)
                      \end{array} \right]  : v\in\mathbb{C}^{m},w \in \mathbb{C}^{k}\right\}.$$

\begin{lemma}
$\Gamma_{\mathcal{S}}$ is a vector space under the Kronecker product.
\end{lemma}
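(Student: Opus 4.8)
The plan is to show that $\Gamma_{\mathcal{S}}$ is closed under addition and scalar multiplication, viewing it as a subset of the vector space $\mathbb{C}(\lambda)^{(m+k)n}$ (column vectors of rational functions). The defining feature of an element of $\Gamma_{\mathcal{S}}$ is that it has the special block form $\left[\begin{smallmatrix} v \otimes (A(\lambda) - B I_{r\times n}) \\ w \otimes (C + D(\lambda) I_{r\times n}) \end{smallmatrix}\right]$ for some $v \in \mathbb{C}^m$, $w \in \mathbb{C}^k$. First I would fix the two "building block" matrices, writing $P(\lambda) := A(\lambda) - B I_{r\times n} \in \mathbb{C}[\lambda]^{n\times n}$ and $Q(\lambda) := C + D(\lambda) I_{r\times n} \in \mathbb{C}[\lambda]^{r\times n}$, so that a generic element of $\Gamma_{\mathcal{S}}$ is $\left[\begin{smallmatrix} v \otimes P(\lambda) \\ w \otimes Q(\lambda) \end{smallmatrix}\right]$. (Strictly, here "$\otimes$" denotes the block column obtained by stacking $v_1 P(\lambda), \ldots, v_m P(\lambda)$, matching the usage in (\ref{ransv}).)

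Next I would take two arbitrary elements, say $X_1$ built from $(v, w)$ and $X_2$ built from $(v', w')$, and compute $X_1 + X_2$. Using bilinearity of the Kronecker/stacking operation in its first argument, the top block is $v \otimes P(\lambda) + v' \otimes P(\lambda) = (v + v') \otimes P(\lambda)$ and similarly the bottom block is $(w + w') \otimes Q(\lambda)$; hence $X_1 + X_2$ is again of the required form, with ansatz data $(v+v', w+w') \in \mathbb{C}^{m+k}$. The same bilinearity gives, for any scalar $\alpha \in \mathbb{C}$, that $\alpha X_1 = (\alpha v) \otimes P(\lambda)$ stacked over $(\alpha w) \otimes Q(\lambda)$, again in $\Gamma_{\mathcal{S}}$. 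Finally the zero element (take $v = 0$, $w = 0$) lies in $\Gamma_{\mathcal{S}}$, and the remaining vector-space axioms are inherited from the ambient space $\mathbb{C}(\lambda)^{(m+k)n}$, so $\Gamma_{\mathcal{S}}$ is a linear subspace.

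Honestly, there is no real obstacle here: the only thing to be careful about is the precise meaning of "$\otimes$" in the statement (the lemma says "under the Kronecker product," but what is actually used is that the map $(v,w) \mapsto \left[\begin{smallmatrix} v \otimes P(\lambda) \\ w \otimes Q(\lambda) \end{smallmatrix}\right]$ is linear, which is just bilinearity of $\otimes$ in each slot). So in the write-up I would state explicitly that $\Gamma_{\mathcal{S}}$ is the image of $\mathbb{C}^{m+k}$ under this linear map, and conclude that it is a subspace; this also records $\dim \Gamma_{\mathcal{S}} \le m+k$, with equality when $P(\lambda) \neq 0$ and $Q(\lambda) \neq 0$, which will be needed later.
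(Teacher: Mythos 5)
Your proposal is correct and follows essentially the same route as the paper: the paper omits a written proof for this lemma but proves the analogous statement for $\Gamma_{G}$ by exactly your argument, namely closure under addition and scalar multiplication via bilinearity of $\otimes$ in the vector slot, together with the observation that $\Gamma_{\mathcal{S}}$ is the image of $\mathbb{C}^{m+k}$ under the linear map $(v,w)\mapsto\left[\begin{smallmatrix} v\otimes(A(\lambda)-BI_{r\times n})\\ w\otimes(C+DI_{r\times n})\end{smallmatrix}\right]$. Your parenthetical caveat that the isomorphism with $\mathbb{C}^{m+k}$ requires the two building blocks to be nonzero is a small refinement the paper glosses over, but it does not change the argument.
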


Notice that from the properties of Kronecker product it is easy to see that $\Gamma_{\mathcal{S}}$ is a vector space isomorphic to $\mathbb{C}^{m+k}$.
Now, we have the following definition.

\begin{definition}
Consider the classes of pencils

{\small
\begin{align*}
& \mathbb{L}_{1}(\mathcal{S}) := \left\{ \mathbb{L}(\lam) =\left[
                                        \begin{array}{c|c}
                                          L(\lambda) & -v e_{k}^{T} \otimes B  \\
                                          \hline
                                           we_{m}^{T} \otimes C & K(\lambda) \\
                                        \end{array}
                                      \right] :  
(L(\lambda), v) \in \mathcal{L}_{1}(A),  \,\,  (K(\lambda), w) \in \mathcal{L}_{1}(D)
       \right\}.
\end{align*} }
\end{definition}

Note that if $\mathbb{L}(\lambda) \in \mathbb{L}_{1}(\mathcal{S})$,
then 
$\mathbb{L}(\lam).\left[\begin{array}{c}
                         \Lambda_{m-1} \otimes I_{n}\\
                         \Lambda_{k-1} \otimes I_{r\times n}
                         \end{array}\right] \in \Gamma_{\mathcal{S}}. $

We will sometimes use the phrase "$\mathbb{L}(\lambda)$ satisfies the right ansatz with vector $\left[\begin{array}{c}
                         v\\
                         w 
                         \end{array}\right] $ or we say
$\left[\begin{array}{c}
                         v\\
                         w 
                         \end{array}\right] $ is the right ansatz vector for $\mathbb{L}(\lambda)$" when $\mathbb{L}(\lambda) \in \mathbb{L}_{1}(\mathcal{S})$  and the vector $\left[\begin{array}{c}
                         v\\
                         w 
                         \end{array}\right] $ in (\ref{ransv}).

\begin{lemma}
    $\mathbb{L}_{1}(\mathcal{S})$ is a vectorspace isomorphic to $\mathcal{L}_{1}(A)\times \mathcal{L}_{1}(D)$.
\end{lemma}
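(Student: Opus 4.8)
The plan is to exhibit an explicit linear bijection between $\mathbb{L}_1(\mathcal{S})$ and $\mathcal{L}_1(A) \times \mathcal{L}_1(D)$ and verify that it respects the vector-space operations. Define the map
$$
\Phi : \mathbb{L}_1(\mathcal{S}) \longrightarrow \mathcal{L}_1(A) \times \mathcal{L}_1(D), \qquad
\Phi\!\left(\left[\begin{array}{c|c} L(\lambda) & -v e_k^T \otimes B \\ \hline w e_m^T \otimes C & K(\lambda) \end{array}\right]\right) = \bigl(L(\lambda),\, K(\lambda)\bigr),
$$
i.e. $\Phi$ simply reads off the two diagonal pencil blocks. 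By the very definition of $\mathbb{L}_1(\mathcal{S})$, every element has this block form with $(L(\lambda),v)\in\mathcal{L}_1(A)$ and $(K(\lambda),w)\in\mathcal{L}_1(D)$, so $\Phi$ is well defined; conversely, given any $(L(\lambda),K(\lambda))\in\mathcal{L}_1(A)\times\mathcal{L}_1(D)$ we recover the associated right ansatz vectors $v$ and $w$ and assemble the $4$-block pencil, which lies in $\mathbb{L}_1(\mathcal{S})$, so $\Phi$ is onto. For injectivity, note that $L$ and $K$ determine the diagonal blocks outright, and they determine $v$ and $w$ — hence the off-diagonal blocks $-ve_k^T\otimes B$ and $we_m^T\otimes C$ — uniquely, since the right ansatz vector of a pencil in $\mathcal{L}_1$ is unique (the companion-type construction of \cite{mmmm06} pins down $v$ from $L$, and likewise $w$ from $K$); thus $\Phi$ is a bijection.

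It then remains to check that $\mathbb{L}_1(\mathcal{S})$ is closed under addition and scalar multiplication and that $\Phi$ is linear. The first point reduces to the corresponding closure statements for $\mathcal{L}_1(A)$ and $\mathcal{L}_1(D)$, which are known from \cite{mmmm06}: if $\mathbb{L}=\left[\begin{smallmatrix} L & -v e_k^T\otimes B \\ w e_m^T\otimes C & K\end{smallmatrix}\right]$ and $\mathbb{L}'=\left[\begin{smallmatrix} L' & -v' e_k^T\otimes B \\ w' e_m^T\otimes C & K'\end{smallmatrix}\right]$ are in $\mathbb{L}_1(\mathcal{S})$ with ansatz vectors $\left[\begin{smallmatrix}v\\ w\end{smallmatrix}\right]$, $\left[\begin{smallmatrix}v'\\ w'\end{smallmatrix}\right]$, then $\alpha\mathbb{L}+\beta\mathbb{L}'$ has diagonal blocks $\alpha L+\beta L'\in\mathcal{L}_1(A)$ and $\alpha K+\beta K'\in\mathcal{L}_1(D)$ (with right ansatz vectors $\alpha v+\beta v'$ and $\alpha w+\beta w'$ respectively), while the off-diagonal blocks become $-(\alpha v+\beta v')e_k^T\otimes B$ and $(\alpha w+\beta w')e_m^T\otimes C$ because the Kronecker product is bilinear and $B$, $C$ are fixed. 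Hence $\alpha\mathbb{L}+\beta\mathbb{L}'\in\mathbb{L}_1(\mathcal{S})$, confirming $\mathbb{L}_1(\mathcal{S})$ is a vector space, and the same computation shows $\Phi(\alpha\mathbb{L}+\beta\mathbb{L}') = \alpha\Phi(\mathbb{L}) + \beta\Phi(\mathbb{L}')$.

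The only genuinely delicate point is the bookkeeping that ties the block structure of $\mathbb{L}_1(\mathcal{S})$ to the right ansatz identity (\ref{ransv}): one must make sure that the off-diagonal blocks of a sum are again of the required rank-one-in-block form $-(\text{vector})\,e_k^T\otimes B$ and that the vector appearing there is exactly the $v$-part (respectively $w$-part) of the ansatz vector of the summed diagonal pencil. This is where I expect to spend the most care, though it is ultimately a direct consequence of the uniqueness of ansatz vectors in $\mathcal{L}_1$ together with bilinearity of $\otimes$; no new idea is needed beyond carefully transporting the $\mathcal{L}_1$-theory of \cite{mmmm06} through the block partition. Once these pieces are in place, $\Phi$ is the desired vector-space isomorphism and the lemma follows.
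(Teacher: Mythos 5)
Your proposal is correct and follows essentially the same route as the paper: both define the block-projection map onto the two diagonal pencils and verify it is a linear bijection, the only cosmetic difference being that the paper's map outputs the pairs $((L(\lambda),v),(K(\lambda),w))$ explicitly while you output $(L(\lambda),K(\lambda))$ and recover $v,w$ from the uniqueness of the right ansatz vector (valid here since $A(\lambda)$ is regular and $D(\lambda)$ has degree $k$, so neither is the zero polynomial). Your write-up actually supplies the linearity, injectivity, and surjectivity checks that the paper merely asserts.
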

\begin{proof}
    Define a function $\phi:\mathbb{L}_{1}(\mathcal{S}\to\mathcal{L}_{1}(A)\times\mathcal{L}_{1}(D)$ such that
    $$\phi\left(\left[\begin{array}{c|c}
                    L(\lambda) & -v e_{k}^{T} \otimes B  \\\hline
                    we_{m}^{T} \otimes C & K(\lambda) \\
                    \end{array}\right]\right)       =\left(\left(L(\lambda,v)\right),\left(K(\lambda),w\right)\right).$$ Then $\phi$ is linear, one one and onto. 
\end{proof}

\begin{definition}(Block Column Shifted Sum)
Let $X=\left[\begin{array}{c|c}X_{11} & X_{12} \\\hline X_{21} & X_{22} \end{array}\right]$ and $Y=\left[\begin{array}{c|c}Y_{11} & Y_{12} \\\hline Y_{21} & Y_{22} \end{array}\right]$ be two $(mn+rk)\times(mn+rk)$ matrices such that $X_{11},Y_{11}\in\mathbb{C}^{mn\times mn}$ and $X_{22},Y_{22}\in\mathbb{C}^{rk\times rk}$. Then, we define the block column shifted sum of $X$ and $Y$ as,

$$X \dashv\vdash Y=\left[\begin{array}{c|c} X_{11}\boxplus Y_{11} & X_{12}\boxplus Y_{12} \\ \hline X_{21}\boxplus Y_{21} & X_{22}\boxplus Y_{22}\end{array}\right]$$
where $\boxplus$ denotes the column shifted sum defined in \cite{mmmm06}.
\end{definition}

\begin{lemma}
Let
$$\mathbb{L}(\lambda)=\left[\begin{array}{c|c}
                             L(\lambda) & -v e_{k}^{T} \otimes B  \\ 
                             \hline
                            we_{m}^{T} \otimes C & K(\lambda) \\ \end{array}\right]
                        =\left[\begin{array}{c|c}\lambda X+Y &ve^{k}_{k}\otimes B \\ \hline
                           we^{m}_{m}\otimes C & \lambda P+Q\end{array} \right] \\
                           $$$$=\lambda\left[\begin{array}{c|c}X &0 \\\hline
                           0 & P\end{array}\right]+\left[\begin{array}{c|c}Y &ve^{k}_{k}\otimes B \\\hline
                           we^{m}_{m}\otimes C & Q\end{array}\right]=\lambda\mathbb{X}+\mathbb{Y}$$
Then,\\ 
{\small $\mathbb{L}(\lambda) \in \mathbb{L}_{1}(\mathcal{S}) \iff \mathbb{X}\dashv\vdash\mathbb{Y}
=\left[\begin{array}{c|c} 
   v\otimes \left[\begin{array}{ccc}A_{m}&\cdots&A_0\end{array}\right] & v\otimes \left[\begin{array}{cccc} 0 & \cdots & 0 & -B\end{array}\right] \\ 
\hline 
w\otimes \left[\begin{array}{cccc} 0 & \cdots & 0 & C \end{array}\right] & w\otimes\left[\begin{array}{ccc} D_{k} &\cdots & D_{0}
\end{array}\right] 
\end{array}\right]$}
\end{lemma}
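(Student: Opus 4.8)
The plan is to reduce this block-structured equivalence to the known characterization of $\mathcal{L}_1$ from \cite{mmmm06}, applied separately to the $(1,1)$ and $(2,2)$ blocks, and to check the two off-diagonal blocks by direct inspection. First I would recall the basic fact from \cite{mmmm06}: for an $n\times n$ matrix polynomial $A(\lambda)=\sum_{j=0}^m\lambda^jA_j$, a pencil $L(\lambda)=\lambda X+Y$ lies in $\mathcal{L}_1(A)$ with right ansatz vector $v$ if and only if the column shifted sum satisfies $X\boxplus Y = v\otimes[\,A_m\ \cdots\ A_0\,]$; and likewise $(K(\lambda),w)\in\mathcal{L}_1(D)$ iff $P\boxplus Q = w\otimes[\,D_k\ \cdots\ D_0\,]$. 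So by the definition of $\mathbb{L}_1(\mathcal{S})$, the statement $\mathbb{L}(\lambda)\in\mathbb{L}_1(\mathcal{S})$ is \emph{by definition} the conjunction of these two identities together with the prescribed off-diagonal shapes $-ve_k^T\otimes B$ and $we_m^T\otimes C$. Hence the content of the lemma is purely that the block column shifted sum $\mathbb{X}\dashv\vdash\mathbb{Y}$ encodes exactly these four conditions, one per block.

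Next I would unwind the definition of $\dashv\vdash$. Writing $\mathbb{X}=\mathrm{diag}(X,P)$ and $\mathbb{Y}=\left[\begin{smallmatrix}Y & ve_k^k\otimes B\\ we_m^m\otimes C & Q\end{smallmatrix}\right]$ in $2\times2$ block form with blocks of sizes $mn$ and $rk$, the definition gives
\[
\mathbb{X}\dashv\vdash\mathbb{Y}=\left[\begin{array}{c|c} X\boxplus Y & 0\boxplus(ve_k^k\otimes B)\\ \hline 0\boxplus(we_m^m\otimes C) & P\boxplus Q\end{array}\right].
\]
The $(1,1)$ block equals $v\otimes[\,A_m\ \cdots\ A_0\,]$ iff $(L(\lambda),v)\in\mathcal{L}_1(A)$, and the $(2,2)$ block equals $w\otimes[\,D_k\ \cdots\ D_0\,]$ iff $(K(\lambda),w)\in\mathcal{L}_1(D)$, by the cited result. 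It remains to verify the two off-diagonal blocks. For the $(1,2)$ block: the column shifted sum of the $mn\times rk$ zero matrix with $ve_k^k\otimes B$ (viewed as a block $m\times k$ array of $n\times r$ blocks, whose only nonzero block column is the last, equal to $v\otimes B$) shifts that content one block-column to the right, producing an $mn\times r(k+1)$ array whose only nonzero block column is the new last one, namely $v\otimes B$; this is precisely $v\otimes[\,0\ \cdots\ 0\ {-(-B)}\,]$... here I must be careful with the sign convention coming from $-ve_k^T\otimes B$ versus $ve_k^k\otimes B$ in the Lemma's own restatement, and I would reconcile it so that the target reads $v\otimes[\,0\ \cdots\ 0\ -B\,]$ as written. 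Symmetrically, the $(2,1)$ block: the column shifted sum of the $rk\times mn$ zero matrix with $we_m^m\otimes C$ shifts the single nonzero block column $w\otimes C$ one place right into an $rk\times n(m+1)$ array, giving $w\otimes[\,0\ \cdots\ 0\ C\,]$.

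I would present this as: (i) state the scalar/diagonal-block characterization from \cite{mmmm06}; (ii) expand $\dashv\vdash$ into its four blocks; (iii) match the two diagonal blocks to membership in $\mathcal{L}_1(A)$ and $\mathcal{L}_1(D)$; (iv) compute the two off-diagonal shifted sums by hand and match them to the off-diagonal blocks of the target matrix. The only real subtlety — and the step I would watch most carefully — is bookkeeping: getting the block sizes right so that $X\boxplus Y$ lands in $\C^{mn\times n(m+1)}$ while $P\boxplus Q$ lands in $\C^{rk\times r(k+1)}$, confirming that the off-diagonal shifted sums have matching dimensions so the assembled matrix is genuinely $(mn+rk)\times(n(m+1)+r(k+1))$, and tracking the sign on the $B$-block consistently between the definition of $\mathbb{L}_1(\mathcal{S})$ and the displayed target. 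None of this is deep; once the shifted-sum definitions are unwound the equivalence is essentially immediate, since $\dashv\vdash$ was defined precisely as the blockwise application of $\boxplus$, and $\boxplus$ is exactly the operator whose output characterizes $\mathcal{L}_1$.
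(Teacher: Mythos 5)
Your proof is correct and is essentially the calculation the paper omits (its proof reads only ``Straightforward calculation''): you reduce the claim to the $\boxplus$-characterization of $\mathcal{L}_1(A)$ and $\mathcal{L}_1(D)$ from \cite{mmmm06} on the diagonal blocks and verify the two off-diagonal shifted sums directly, which is exactly the intended route. Your explicit flagging of the sign inconsistency between $-ve_k^T\otimes B$ and $ve_k^k\otimes B$ in the lemma's own restatement, and of the block-size bookkeeping, is appropriate; no gaps.
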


\begin{proof}
Straightforward calculation.
\end{proof}

The following theorem characterizes the pencils in $\mathbb{L}_{1}(\mathcal{S})$.

\begin{theorem}[Characterizations of Pencils in $\mathbb{L}_{1}(\mathcal{S})$] \label{cl1s}
Let $\mathcal{S}(\lambda)$ be an $(n+r)\times (n + r)$ system matrix given in (\ref{sysmatrix01}), and $\left[
                                  \begin{array}{c}
                                    v \\
                                    w \\
                                  \end{array}
                                \right]
 \in\, \mathbb{C}^{m+k}$ be any vector. Then the set
 of pencils in $\mathbb{L}_{1}(G)$ with right ansatz vector 
 $\left[
        \begin{array}{c}
        v \\
       w \\
     \end{array}
     \right]
 $ consists of all $\mathbb{L}(\lambda)= \lambda  \mathbb{X}+\mathbb{Y}$ such that
\[
\mathbb{X} = \bordermatrix{ & n  & (m-1)n & r & (k-1)r\cr
                   &  v\otimes A_{m} & W_1   & 0 & 0\cr
                   & 0 & 0 & w\otimes D_{k} & W_{2} \cr
                   }
\]
and \[
\mathbb{Y} = 
 \bordermatrix{ &  (m-1)n & n &  (k-1)r & r\cr
          &  v \otimes [A_{m-1} \cdots A_{1} ] -W_1 & v \otimes A_{0}  & 0 & 
          -ve_{k}^{T}\otimes B  \cr
        & 0 & we_{m}^{T}\otimes C  & -W_{2}+ w \otimes [D_{k-1} \cdots D_{1}] &  w \otimes D_{0}\cr
                   }
\]
with $W_1\in\, \mathbb{F}^{(nm)\times n(m-1)}$, $W_{2}\in \mathbb{F}^{(rk)\times r(k-1)}$ chosen arbitrarily.    
\end{theorem}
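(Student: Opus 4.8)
The plan is to reduce the statement about $\mathbb{L}_1(\mathcal{S})$ to the known characterization of $\mathcal{L}_1$ for ordinary matrix polynomials (Theorem~3.5 of \cite{mmmm06}), applied separately to the blocks $A(\lambda)$ and $D(\lambda)$, and then to stitch the two descriptions together using the block-column-shifted-sum lemma (the last Lemma before the theorem). First I would invoke that lemma: a pencil $\mathbb{L}(\lambda) = \lambda\mathbb{X} + \mathbb{Y}$ of the block form displayed there lies in $\mathbb{L}_1(\mathcal{S})$ with right ansatz vector $\left[\begin{smallmatrix} v \\ w\end{smallmatrix}\right]$ if and only if
\[
\mathbb{X}\dashv\vdash\mathbb{Y} =
\left[\begin{array}{c|c}
 v\otimes [A_m \cdots A_0] & v\otimes[0 \cdots 0 \ {-B}] \\ \hline
 w\otimes[0 \cdots 0 \ C] & w\otimes[D_k \cdots D_0]
\end{array}\right].
\]
Since $\dashv\vdash$ acts blockwise (by its definition), this single matrix equation splits into four independent equations, one for each of the four blocks: $X_{11}\boxplus Y_{11} = v\otimes[A_m\cdots A_0]$, $X_{12}\boxplus Y_{12} = v\otimes[0\cdots 0\ {-B}]$, $X_{21}\boxplus Y_{21} = w\otimes[0\cdots 0\ C]$, and $X_{22}\boxplus Y_{22} = w\otimes[D_k\cdots D_0]$.

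Next I would recognize that the $(1,1)$-block equation $X\boxplus Y = v\otimes[A_m\cdots A_0]$ is exactly the defining equation of $\mathcal{L}_1(A)$ with ansatz vector $v$, so by \cite[Thm.~3.5]{mmmm06} its solutions are precisely $X = [\, v\otimes A_m \mid W_1\,]$ and $Y = [\, v\otimes[A_{m-1}\cdots A_1] - W_1 \mid v\otimes A_0\,]$ with $W_1\in\mathbb{F}^{nm\times n(m-1)}$ arbitrary; this gives the first block-row of $\mathbb{X}$ (its first $nm$ columns) and the corresponding part of $\mathbb{Y}$. The same theorem applied to $D(\lambda)$ handles the $(2,2)$-block equation $P\boxplus Q = w\otimes[D_k\cdots D_0]$, yielding $P = [\, w\otimes D_k \mid W_2\,]$, $Q = [\, w\otimes[D_{k-1}\cdots D_1] - W_2 \mid w\otimes D_0\,]$ with $W_2$ arbitrary. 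For the off-diagonal blocks: in $\mathbb{X}$ the $(1,2)$ and $(2,1)$ blocks are zero by construction, so $X_{12}\boxplus Y_{12} = \lambda\cdot 0 + (v e_k^T\otimes B)$ forces the off-diagonal of $\mathbb{Y}$ to be $v e_k^T\otimes B$ shifted — a short direct check shows $0\boxplus(v e_k^T\otimes B)$ already equals $v\otimes[0\cdots 0\ B]$ after the block structure is unwound, consistent with the $-ve_k^T\otimes B$ entry in the claimed form (sign from $\mathcal{S}$), and symmetrically for $w e_m^T\otimes C$. So the off-diagonal equations impose no further constraint beyond fixing those blocks to the values already built into the ansatz of $\mathbb{L}_1(\mathcal{S})$.

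Finally I would reassemble: the four solved pieces, placed back into the block arrangement dictated by the sizes $n, (m-1)n, r, (k-1)r$ for $\mathbb{X}$ and $(m-1)n, n, (k-1)r, r$ for $\mathbb{Y}$, are exactly the two displayed matrices in the theorem statement, and conversely any pencil of that form satisfies all four blockwise equations, hence lies in $\mathbb{L}_1(\mathcal{S})$. The only genuinely delicate point — and the step I expect to be the main obstacle — is the bookkeeping of the column-shift in the off-diagonal blocks: because $\boxplus$ shifts by $n$ (resp.\ $r$) columns within a block that is $mn\times mn$ (resp.\ $rk\times rk$), while $B$ and $C$ are non-square ($n\times r$ and $r\times n$), one must be careful that the shifted sums land in the right positions and that the "last column block" conventions ($e_k^T$, $e_m^T$) match the companion-form placement of $-B$ and $C$; verifying that the shift produces precisely the $-ve_k^T\otimes B$ and $we_m^T\otimes C$ pattern with the indices as claimed is the part that requires care rather than the appeal to \cite{mmmm06}, which is essentially mechanical. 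I would present this verification as the one explicit computation and treat the rest as assembly.
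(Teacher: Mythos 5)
Your proposal is correct and follows essentially the same route as the paper: the paper's proof also reduces everything to the block column shifted sum equation $\mathbb{X}\dashv\vdash\mathbb{Y}=\bigl[\begin{smallmatrix} v\otimes[A_m\cdots A_0] & v\otimes[0\cdots 0\ {-B}]\\ w\otimes[0\cdots 0\ C] & w\otimes[D_k\cdots D_0]\end{smallmatrix}\bigr]$, exhibits the same particular solution, and obtains the $W_1,W_2$ freedom by solving $X\boxplus Y=0$ and $P\boxplus Q=0$ for the diagonal blocks --- which is exactly the computation you outsource to \cite[Thm.~3.5]{mmmm06}. The only difference is presentational (the paper packages the argument as image and kernel of the evaluation map $\mathcal{F}:\mathbb{L}_1(\mathcal{S})\to\Gamma_{\mathcal{S}}$, while you solve the four blockwise equations directly), and you correctly identify the off-diagonal shift bookkeeping for the non-square blocks $B$ and $C$ as the one step needing explicit verification.
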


\begin{proof}
Consider a map {\small $\mathcal{F}:\mathbb{L}_{1}(\mathcal{S})\to \Gamma_{\mathcal{S}}$} defined as {\small $\mathbb{L}(\lambda)\longmapsto \mathbb{L}(\lambda)\left[\begin{array}{c} \Lambda_{m-1} \otimes I_{n}\\\Lambda_{k-1} \otimes I_{r\times n}\end{array}\right]$}. Then, for any two pencils $\mathbb{L}_{1}(\lambda)$ and $\mathbb{L}_{2}(\lambda)$ in $\mathbb{L}_{1}(\mathcal{S})$, we have 
\begin{align*}
\mathcal{F} \left(\alpha \mathbb{L}_{1}(\lambda)+\mathbb{L}_{2}(\lambda)\right) & = (\alpha \mathbb{L}_{1}(\lambda)+\mathbb{L}_{2}(\lambda))
     \left[ \begin{array}{c}\Lambda_{m-1} \otimes I_{n} \\
                         \Lambda_{k-1} \otimes I_{r\times n}
                         \end{array}\right]\\
& = \alpha \mathbb{L}_{1}(\lambda) \left[
                         \begin{array}{c}
                     \Lambda_{m-1} \otimes I_{n} \\
                         \Lambda_{k-1} \otimes I_{r\times n}
                         \end{array}
              \right]+ \mathbb{L}_{2}(\lambda)\left[
                         \begin{array}{c}
                     \left(\Lambda_{m-1} \otimes I_{n} \right) \\
                         \Lambda_{k-1} \otimes I_{r\times n}
                         \end{array}\right] \\
&= \alpha \mathcal{F}(\mathbb{L}_{1}(\lambda)) +  \mathcal{F}(\mathbb{L}_{2}(\lambda))
\end{align*}
This shows that $\mathcal{F}$ is linear.
Next, let $\left[\begin{array}{c} v\otimes \left(A(\lambda)-BI_{r\times n}\right) \\\hline w\otimes\left(C+D(\lambda)I_{r\times n}\right)\end{array}\right]$ be an arbitrary element of $\Gamma_{\mathcal{S}}$. Construct $\mathbb{X}$ and $\mathbb{Y}$ as follows:
$$\mathbb{X}=\left[\begin{array}{cc|cc} 
              v\otimes A_{m} & 0 & 0 & 0 \\ \hline
              0 & 0 & w\otimes D_{k} & 0
               \end{array}\right]$$
and 
$$\mathbb{Y}=\left[\begin{array}{c|c}
             v\otimes\left[\begin{array}{cccc} A_{m-1}&\cdots & A_{1} & A_{0} \end{array}\right] & v \otimes \left[\begin{array}{cccc} 0 & \cdots & 0 & -B \end{array}\right] \\ \hline
             w\otimes \left[\begin{array}{cccc} 0 & \cdots & 0 & C \end{array}\right] & w\otimes \left[\begin{array}{cccc}D_{k-1} & \cdots & D_{1} & D_{0} \end{array}\right]\end{array}\right]$$
Then, we see that
\begin{align*}
\mathbb{X}\dashv\vdash \mathbb{Y} 
&=  \left[\begin{array}{ccc|ccc}  
       v \otimes A_{m} & 0 & 0 & 0 & 0 & 0 \\\hline
       0 & 0 & 0 &  w \otimes D_{k} & 0  & 0 \\
    \end{array}\right] \\
    &\hspace{2mm}+ 
    \left[\begin{array}{ccc|ccc}
       0 &  v \otimes [A_{m-1} \cdots A_{1}] & v \otimes A_0 & 0  & 0& -ve_{k}^{T} \otimes B  \\\hline
       0 & 0 & w e_{m}^{T} \otimes C & 0 & w \otimes [D_{k-1} \cdots D_{1}] & w \otimes D_0\\
    \end{array}\right] \\            
 & = \left[\begin{array}{ccc|ccc}
       v \otimes A_{m} &  v \otimes [A_{m-1} \cdots A_{1}] &  v \otimes A_0 & 0 & 0 & -ve_{k}^{T} \otimes B  \\\hline
       0 & 0 & w e_{m}^{T} \otimes C & w \otimes D_{k} & w \otimes [D_{k-1} \cdots D_{1}] & w \otimes D_0\\
     \end{array}\right] \\
 &= \left[\begin{array}{cc|cc}
       v \otimes [A_{m} \,\, A_{m-1} \cdots A_{1}] &  v \otimes A_0 & 0 & -ve_{k}^{T} \otimes B \\\hline
       0 & w e_{m}^{T} \otimes C & w \otimes [D_{k} \,\, D_{k-1} \cdots D_{1}] & w\otimes D_0 \\
    \end{array}\right] \\
    & =\left[\begin{array}{c|c}
             v\otimes\left[\begin{array}{ccccc} A_{m} & A_{m-1}&\ldots & A_{1} & A_{0} \end{array}\right] & v \otimes \left[\begin{array}{cccc} 0 & \cdots & 0 & -e_{k}^{T}\otimes B \end{array}\right] \\ \hline
             w\otimes \left[\begin{array}{cccc}0 & \cdots & 0 & e_{m}^{T}\otimes C \end{array}\right] & w\otimes \left[\begin{array}{ccccc} D_{k} & D_{k-1} & \ldots & D_{1} & D_{0} \end{array}\right]\end{array}\right] 
             \end{align*}
This shows that $\lambda\mathbb{X}+\mathbb{Y}$ is an $\mathcal{F}$-perimage of $\left[\begin{array}{c} v\otimes \left(A(\lambda)-BI_{r\times n}\right) \\\hline w\otimes\left(C+D(\lambda)I_{r\times n}\right)\end{array}\right]$ and hence $\mathcal{F}$ is onto.

Next, we find the kernel of $\mathcal{F}$. Let 
$$\mathbb{L}(\lambda)=\lambda\mathbb{X}+\mathbb{Y}
                     =\left[\begin{array}{c|c}
                      \lambda X +Y & -ve_{k}^{T}\otimes B \\ \hline
                       we_{m}^{T}\otimes C & \lambda P + Q \end{array}\right] \in \ker \mathcal{F}.
                       $$
Then $\mathbb{X} \dashv\vdash\mathbb{Y} =0$ implies that
\begin{align*}
&\left[\begin{array}{c|c}
        X  & 0 \\ \hline
        0 &  P  \end{array}\right] \dashv\vdash
        \left[\begin{array}{c|c}
        Y & -ve_{k}^{T}\otimes B \\ \hline
        we_{m}^{T}\otimes C &  Q \end{array}\right]=0 \\
 \implies & \left[\begin{array}{c|c}
            X\boxplus Y  &  0 \boxplus -ve_{k}^{T}\otimes B \\ 
            \hline
             0 \boxplus we_{m}^{T}\otimes C  & P \boxplus Q \end{array}\right] =0 \\
  \implies & X\boxplus Y  = 0,\quad P \boxplus Q= 0, \quad -v e_{k}^{T}\otimes B=0\quad \text{and}  \quad we_{m}^{T}\otimes C=0 \\
 \implies & X =  \left[\begin{array}{cc}
    0 & - W_1 \\
    \end{array}\right]\quad \text{and}\quad Y =  \left[\begin{array}{cc}
     W_1 & 0 \\
    \end{array}\right]  \\
     &  P =  \left[\begin{array}{cc}
    0 & - W_2 \\
    \end{array}\right]\quad \text{and}\quad Q =  \left[\begin{array}{cc}
     W_2 & 0 \\
    \end{array}\right], 
\end{align*}
where $W_{1}\in\mathbb{C}^{mn\times (m-1)n}$ and $W_{2}\in\mathbb{C}^{rk\times (k-1)r}$. Therefore, we have 
\[
\mathbb{X} = \bordermatrix{ & n  & (m-1)n \cr
                   &  v\otimes A_{m} & W_1   & 0 & 0\cr
                   & 0 & 0 & w\otimes D_{k} & W_{2} \cr
                   }
\]
and \[
\mathbb{Y} = 
 \bordermatrix{ &  (m-1)n & n \cr
          &  v \otimes [A_{m-1} \cdots A_{1} ] -W_1 & v \otimes A_{0}  & 0 & 
          -ve_{k}^{T}\otimes B  \cr
        & 0 & we_{m}^{T}\otimes C  & -W_{2}+ w \otimes [D_{k-1} \cdots D_{1}] &  w \otimes D_{0}\cr
                   }
\]
Hence proved.
\end{proof}

\begin{corollary}
 $\dim \mathbb{L}_1{S} = \dim \mathcal{L}_{1}(A) + \dim \mathcal{L}_{1}(D). $
\end{corollary}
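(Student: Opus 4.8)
The plan is to deduce the dimension formula directly from the isomorphism established immediately above, namely that $\mathbb{L}_{1}(\mathcal{S})$ is a vector space isomorphic to $\mathcal{L}_{1}(A)\times\mathcal{L}_{1}(D)$ through the linear bijection $\phi$. First I would observe that all three spaces are finite dimensional: $\mathcal{L}_{1}(A)$ is a subspace of the space of $mn\times mn$ pencils, $\mathcal{L}_{1}(D)$ is a subspace of the space of $rk\times rk$ pencils, and $\mathbb{L}_{1}(\mathcal{S})$ is a subspace of the space of $(mn+rk)\times(mn+rk)$ pencils. Hence it is legitimate to speak of their dimensions and to invoke the elementary fact that $\dim(U\times V)=\dim U+\dim V$ for finite-dimensional vector spaces $U$ and $V$.

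The argument is then one line: since $\phi$ is a linear bijection,
\[
\dim\mathbb{L}_{1}(\mathcal{S})=\dim\bigl(\mathcal{L}_{1}(A)\times\mathcal{L}_{1}(D)\bigr)=\dim\mathcal{L}_{1}(A)+\dim\mathcal{L}_{1}(D),
\]
which is exactly the claim (the expression $\dim\mathbb{L}_{1}\{S\}$ in the statement is of course intended to be $\dim\mathbb{L}_{1}(\mathcal{S})$).

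As an independent check I would recover the same number from Theorem~\ref{cl1s}. There the generic pencil in $\mathbb{L}_{1}(\mathcal{S})$ with a fixed right ansatz vector $\left[\begin{array}{c} v\\ w\end{array}\right]$ is parametrized freely by the two blocks $W_{1}\in\mathbb{C}^{mn\times(m-1)n}$ and $W_{2}\in\mathbb{C}^{rk\times(k-1)r}$, while the ansatz vector itself ranges over $\mathbb{C}^{m+k}$; verifying that the assignment $\bigl(\left[\begin{array}{c} v\\ w\end{array}\right],W_{1},W_{2}\bigr)\mapsto\mathbb{L}(\lambda)$ is linear and bijective yields $\dim\mathbb{L}_{1}(\mathcal{S})=(m+k)+m(m-1)n^{2}+k(k-1)r^{2}$, which matches $\dim\mathcal{L}_{1}(A)+\dim\mathcal{L}_{1}(D)=\bigl(m+m(m-1)n^{2}\bigr)+\bigl(k+k(k-1)r^{2}\bigr)$ using the known dimensions of $\mathcal{L}_{1}$ from \cite{mmmm06}.

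I do not anticipate a real obstacle; the one point worth a moment's thought is to confirm that the block form defining $\mathbb{L}_{1}(\mathcal{S})$ introduces no hidden linear relations tying the $(L,v)$ part to the $(K,w)$ part. But the two off-diagonal blocks $-ve_{k}^{T}\otimes B$ and $we_{m}^{T}\otimes C$ are completely determined by $v$ and $w$ and impose no constraints back on $L$ or $K$, so the splitting into $\mathcal{L}_{1}(A)\times\mathcal{L}_{1}(D)$ is genuinely a direct decomposition and the dimension count is clean.
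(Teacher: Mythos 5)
Your proposal is correct, but your primary route differs from the one the paper takes. You deduce the formula in one line from the lemma stating that $\phi:\mathbb{L}_{1}(\mathcal{S})\to\mathcal{L}_{1}(A)\times\mathcal{L}_{1}(D)$ is a linear bijection, together with $\dim(U\times V)=\dim U+\dim V$; this is the most economical argument available and your remark that the off-diagonal blocks $-ve_{k}^{T}\otimes B$ and $we_{m}^{T}\otimes C$ are determined by $v$ and $w$ and impose no back-constraints is exactly the point that makes $\phi$ well defined and bijective. The paper instead derives the corollary from Theorem~\ref{cl1s} by rank--nullity applied to the surjection $\mathcal{F}:\mathbb{L}_{1}(\mathcal{S})\to\Gamma_{\mathcal{S}}$, writing $\dim\mathbb{L}_{1}(\mathcal{S})=\dim\Gamma_{\mathcal{S}}+\dim\ker\mathcal{F}=(m+k)+m(m-1)n^{2}+k(k-1)r^{2}$ and matching this against the known dimensions of $\mathcal{L}_{1}(A)$ and $\mathcal{L}_{1}(D)$ from \cite{mmmm06} --- which is precisely the computation you offer as your ``independent check.'' So your secondary argument coincides with the paper's proof, while your primary argument is shorter; what the paper's route buys is independence from the earlier isomorphism lemma (whose proof is only sketched) and an explicit numerical value for the dimension, whereas your route buys brevity at the cost of leaning on that lemma.
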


\begin{proof}
From the above theorem we have  
\begin{align*}
\dim \mathbb{L}_1(S) & = \dim \Gamma_{\mathcal{S}} + \dim \ker \mathcal{F} \\
   & = m+k + (m-1)n . mn + (k-1)r. rk \\
   & = m+ m (m-1)n^2 + k+ k (k-1)r^2 \\
   & =  \dim \mathcal{L}_{1}(A) + \dim \mathcal{L}_{1}(D).
\end{align*}   
\end{proof}

\begin{theorem}
Define $f: \C^{r} \rightarrow \C^{n+r}$ and $g : \C^{r} \rightarrow \C^{n+r}$ by

$$f(x) := \left[\begin{array}{c}
               A(\lambda)^{-1}Bx \\
               x \\
          \end{array}\right]\quad \mbox{and} \quad
g(x) := \left[\begin{array}{c}
           (-CA(\lambda)^{-1})^{T}x \\
            x \\
        \end{array}\right].$$ 
Then the maps
$f : \mathcal{N}_{r}(G(\lam_0))\rightarrow \mathcal{N}_{r}(\mathcal{S}(\lam_0))$  and $g  : \mathcal{N}_l(G(\lam_0))\rightarrow \mathcal{N}_l(\mathcal{S}(\lam_0))$ are isomorphisms. 
\end{theorem}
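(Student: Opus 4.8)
The plan is to establish the claim for $f$ in detail and then obtain the statement for $g$ verbatim after replacing every matrix by its transpose. Throughout, $\lambda_0$ is a point at which $A(\lambda_0)$ is invertible --- equivalently, $\lambda_0$ is not a pole of $G(\lambda)$ --- which is precisely the hypothesis that makes $f$ and $g$ well-defined; $\mathcal{N}_r(\cdot)$ and $\mathcal{N}_l(\cdot)$ denote right and left null spaces. Since $f$ and $g$ are visibly linear, in each case it is enough to check three things: that the map carries the source null space into the target null space, that it is injective, and that it is onto.

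For $f$, the key is a single block computation. Write $f(x)=\left[\begin{array}{c} z\\ x\end{array}\right]$ with $z=A(\lambda_0)^{-1}Bx$; then the first block row of $\mathcal{S}(\lambda_0)f(x)$ is $A(\lambda_0)z-Bx=0$ identically, while the second is $Cz+D(\lambda_0)x=\big(CA(\lambda_0)^{-1}B+D(\lambda_0)\big)x=G(\lambda_0)x$, so $\mathcal{S}(\lambda_0)f(x)=\left[\begin{array}{c} 0\\ G(\lambda_0)x\end{array}\right]$. From this identity everything follows: $f$ sends $\mathcal{N}_r(G(\lambda_0))$ into $\mathcal{N}_r(\mathcal{S}(\lambda_0))$; conversely, any $\left[\begin{array}{c} z\\ x\end{array}\right]\in\mathcal{N}_r(\mathcal{S}(\lambda_0))$ has $z=A(\lambda_0)^{-1}Bx$ by its first block row (using regularity of $A$ at $\lambda_0$), hence equals $f(x)$, and its second block row forces $G(\lambda_0)x=0$, so the preimage lies in $\mathcal{N}_r(G(\lambda_0))$ and $f$ is onto; and injectivity is free, because the last $r$ coordinates of $f(x)$ return $x$, so the coordinate projection onto them is a left inverse. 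With linearity this makes $f:\mathcal{N}_r(G(\lambda_0))\to\mathcal{N}_r(\mathcal{S}(\lambda_0))$ an isomorphism.

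For $g$, recall that $y\in\mathcal{N}_l(M)$ means $M^{T}y=0$, that $\mathcal{S}(\lambda_0)^{T}=\left[\begin{array}{cc} A(\lambda_0)^{T} & C^{T}\\ -B^{T} & D(\lambda_0)^{T}\end{array}\right]$, and that $G(\lambda_0)^{T}=B^{T}(A(\lambda_0)^{T})^{-1}C^{T}+D(\lambda_0)^{T}$. Writing $g(y)=\left[\begin{array}{c} w\\ y\end{array}\right]$ with $w=-(A(\lambda_0)^{T})^{-1}C^{T}y$, the first block row of $\mathcal{S}(\lambda_0)^{T}g(y)$ vanishes identically and the second equals $\big(B^{T}(A(\lambda_0)^{T})^{-1}C^{T}+D(\lambda_0)^{T}\big)y=G(\lambda_0)^{T}y$; the remainder of the argument --- image containment, surjectivity by solving the first block row for $w$, injectivity from the last $r$ coordinates --- is identical to the case of $f$, so $g:\mathcal{N}_l(G(\lambda_0))\to\mathcal{N}_l(\mathcal{S}(\lambda_0))$ is an isomorphism too.

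I do not expect a genuine obstacle: the only things to keep straight are the transposes in the left null-space computation and the standing assumption that $A(\lambda_0)$ is invertible (which is exactly what gives $f$ and $g$ meaning). Conceptually the statement is just the Schur-complement identity behind (\ref{tfunction01}): eliminating the state component from the first block equation of $\mathcal{S}(\lambda_0)$ and substituting it into the second reproduces precisely $G(\lambda_0)$.
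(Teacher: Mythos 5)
Your proof is correct and follows essentially the same route as the paper: both rest on the block identity $\mathcal{S}(\lambda_0)f(x)=\left[\begin{smallmatrix}0\\ G(\lambda_0)x\end{smallmatrix}\right]$, read off image containment and surjectivity from the two block rows (solving the first for the state component), and get injectivity for free from the last $r$ coordinates, with your treatment of $g$ via $\mathcal{S}(\lambda_0)^{T}$ simply being the explicit version of the paper's ``similarly.'' The only quibble is your parenthetical claim that invertibility of $A(\lambda_0)$ is \emph{equivalent} to $\lambda_0$ not being a pole of $G$ --- that equivalence needs minimality of the realization --- but this remark plays no role in the argument, which only uses that $A(\lambda_0)$ is invertible.
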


\begin{proof}
Let $x \in \mathcal{N}_{r}(G(\lam_0)).$ Then $G(\lam_0)x = 0$. Now
 
$$\mathcal{S}(\lambda_0)\left[\begin{array}{c}
                            A(\lambda_0)^{-1}Bx \\
                            x \\
                        \end{array}\right] 
        = \left[\begin{array}{c|c}
             A(\lam_0) & -B \\\hline
             C & D(\lam_0)  \\
          \end{array}\right]
          \left[\begin{array}{c}
             A(\lambda_0)^{-1}Bx \\
             x \\
          \end{array}\right]  
        = \left[\begin{array}{c}
             Bx-Bx \\
             G(\lam_0)x    \\
          \end{array}\right] 
        = \left[\begin{array}{c}
             0 \\
             0 \\
          \end{array}\right]$$
which shows that $f(x)  \in \mathcal{N}_{r}(\mathcal{S}(\lam_0))$ and
 $f: \mathcal{N}_{r}(G(\lam_0))\rightarrow \mathcal{N}_{r}(\mathcal{S}(\lam_0)).$ Obviously $f$ is injective.
 On the other hand, if $\left[\begin{matrix} u^T, & v^T \end{matrix} \right]^T \in \mathcal{N}_{r}(\mathcal{S}(\lam_0))$ then it is easy to see that $ G(\lam_0) v = 0 $ and $ u = A(\lam_0)^{-1} B v.$ Hence $\left[\begin{matrix} u^T, & v^T \end{matrix} \right]^T = f(v) $ and $ v \in \mathcal{N}_{r}(G(\lam_0)).$ This shows that $f: \mathcal{N}_{r}(G(\lam_0))\rightarrow \mathcal{N}_{r}(\mathcal{S}(\lam_0))$ is an isomorphism.  Similarly, the map $g  : \mathcal{N}_l(G(\lam_0))\rightarrow \mathcal{N}_l(\mathcal{S}(\lam_0))$ is an  isomorphism.
\end{proof}

%Since $f: \mathcal{N}_{r}(G(\lam_0)) \rightarrow \mathcal{N}_{r}(\mathcal{S}(\lam_0)) $ and $g : \mathcal{N}_{l}(G(\lam_0)) \rightarrow \mathcal{N}_{l}(\mathcal{S}(\lam_0))$ are isomorphisms and, by Theorem \ref{th:recvgl}, $h_1: \mathcal{N}_{r}(\mathcal{\mathbb{L}}(\lam_0)) \rightarrow \mathcal{N}_{r}(G(\lam_0))$ and $h_2: \mathcal{N}_{l}(\mathbb{L}(\lam_0)) \rightarrow \mathcal{N}_{l}(G(\lam_0))$ are isomorphisms,  it follows  that $k_1 = f\circ h_1: \mathcal{N}_{r}(\mathbb{L}(\lam_0)) \rightarrow \mathcal{N}_{r}(\mathcal{S}(\lam_0))$ and $k_2 = g\circ k_2 : \mathcal{N}_{l}(\mathbb{L}(\lam_0)) \rightarrow \mathcal{N}_{l}(\mathcal{S}(\lam_0))$   are isomorphisms. 

\section{Vector Spaces of Linearizations for Transfer functions}
%Consider the {\em Multivariable linear time invariant (LTI) state-space system} $\Sigma_1$ given by
%
%\begin{equation}
%\begin{aligned}
%A\left(\frac{d}{dt}\right) x(t) &= B u(t), \\
%y(t) &= C x(t) + D\left(\frac{d}{dt}\right) u(t),
%\end{aligned} \label{hssf02}
%\end{equation}
%
%where $A(\lam) = \sum_{j=0}^{m}\lam^{j}A_j \in \C[\lam]^{n \times n}$ is regular matrix polynomial of degree $m$ and $ D(\lambda) = \sum_{j=0}^{k}\lam^{j}D_j \in \C[\lam]^{r \times r}$ is a  matrix polynomial of degree $m$,  $ C \in \C^{r \times n}, B \in \C^{n \times r}$. 
%The associated system matrix is given by
%
%\begin{equation}
%\mathcal{S}(\lam) = \left[
%                       \begin{array}{c|c}
%                         A(\lam) & -B \\
%                         \hline
%                         C & D (\lambda) \\
%                       \end{array}
%                     \right] \in \C^{(n+r)\times (n+r)} \label{smh}
%\end{equation}
%
%and the transfer function of $\Sigma_1$ is given by
%
%\begin{equation}
%G(\lam) = C A(\lam)^{-1} B + D(\lambda) \in \C^{r \times r}. \label{tfunction02}
%\end{equation}

Consider $G(\lam)$ given in (\ref{tfunction01}). Then recall the first companion linearizations of $G(\lam)$ is given by 

{\scriptsize
\begin{eqnarray}
C_{1}(\lam) &= & \lam  \left[
                         \begin{array}{cccc|cccc}
                           A_{m} &  &  &  & & & & \\
                            & I_{n} &  &  & & & & \\
                            &  & \ddots &  & & & & \\
                            &  &  & I_{n} &  & & &\\
                            \hline
                            &  &  &  & D_k & & & \\
                            &  &  &  &  & I_r& & \\
                            &  &  &  &  & & \ddots & \\
                            &  &  &  &  & & & I_r \\
                         \end{array}
                       \right] \nonumber\\
                       && \quad+ \left[
                \begin{array}{cccc|cccc}
                  A_{m-1} & A_{m-2}& \cdots & A_{0}& 0& \cdots & 0& -B  \\
                   -I_{n} & 0 & \cdots & 0 & 0 & \cdots & & 0\\
                   & \ddots &  & \vdots  & \vdots & & & \vdots\\
                   &  & -I_{n} & 0 & 0 &  \cdots & & 0 \\
                   \hline
                 0 & \cdots & 0 & C &  D_{k-1} & D_{k-2} &\cdots & D_0 \\
                  & 0 & \cdots & 0 &  -I_r & 0 & \cdots & 0 \\
                  &  & \ddots & \vdots &   & \ddots & & \vdots\\
                  &  &  & 0 &   & & -I_r & 0 \\
                \end{array}
              \right]
\end{eqnarray}}

Now, we have $C_{1}(\lam)\left[\begin{array}{c}
                         \Lambda_{m-1} \otimes A(\lambda)^{-1}Bx \\
                         \Lambda_{k-1} \otimes x
                         \end{array}\right]  
                = \left[\begin{array}{cccccc}
                0 & \cdots & 0 & (G(\lambda)x)^{T} & 0 & 0 \end{array}\right]^{T}$ 
for all $x \in \C^r.$ It is equivalent to  the identity 

$$C_{1}(\lam)\left[\begin{array}{c}
                    \Lambda_{m-1} \otimes A(\lambda)^{-1}B \\
                    \Lambda_{k-1} \otimes I_r
                    \end{array}\right] 
            = \left[\begin{array}{c}
                         0 \\
                         \vdots \\
                         0 \\\hline
                        G(\lambda)  \\
                         0 \\
                         \vdots \\
                         0 \\
              \end{array}\right] 
            = \left[\begin{array}{c}
                         0 \\\hline
                         e_{1} \otimes G(\lambda) \\
              \end{array}\right].$$
If we generalize this to any arbitrary pencil then we consider the set of pencils $\mathbb{L}(\lambda) := \lambda \mathbb{X} + \mathbb{Y}$ satisfying the property

$$\mathbb{L}(\lambda)\left[\begin{array}{c}
                     \Lambda_{m-1} \otimes  A(\lambda)^{-1}B\\
                     \Lambda_{k-1} \otimes I_r
                    \end{array}\right] 
    = \mathbb{L}(\lambda)\left[\begin{array}{c} 
                         \lambda^{m-1}  A(\lambda)^{-1}B \\
                         \vdots \\
                          A(\lambda)^{-1}B \\
                          \lambda^{k-1} I_r \\
                          \vdots \\
                          I_r \\
                         \end{array}\right]
    = \left[\begin{array}{c}
                         0\\
                         \vdots \\
                         0\\
                         \hline 
                        w_1 G(\lambda) \\
                         \vdots \\
                         w_k G(\lambda)\\
                         \end{array}\right] 
    = \left[\begin{array}{c}
                         0\\
                         \hline
                          w \otimes G(\lambda)
      \end{array}\right] $$
for some vector $\left[\begin{array}{c} 0 \\\hline w \end{array}
\right]\in \C^{m+k} $ such that $w \in \C^k $. This set of pencils will be denoted by $\mathbb{L}_{1}(G)$ as a reminder that it generalizes the first companion form of $G$.

Define  

$$\Gamma_{G} = \left\{\left[\begin{array}{c}
                         0\\ \hline
                          w \otimes G(\lambda)
               \end{array}\right]  
               : w \in \mathbb{C}^{k}\right\}.$$

\begin{lemma}
$\Gamma_{G}$ is a vector space under the Kronecker product.
\end{lemma}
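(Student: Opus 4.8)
The plan is to realize $\Gamma_G$ as the image of a linear map, exactly as was done (and remarked on) for $\Gamma_{\mathcal{S}}$. First I would fix the ambient vector space: let $\mathcal{M}$ be the set of all $(m+k)$-block column arrays whose first $m$ blocks form a fixed $mn\times r$ zero matrix and whose remaining $k$ blocks are $r\times r$ matrices with entries in $\C(\lam)$. With entrywise addition and entrywise scalar multiplication, $\mathcal{M}$ is a vector space over $\C$. Define $\Phi:\C^{k}\to\mathcal{M}$ by $\Phi(w)=\left[\begin{array}{c} 0 \\ \hline w\otimes G(\lam)\end{array}\right]$, so that $\Gamma_G=\Phi(\C^k)$ by construction.

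Next I would check that $\Phi$ is $\C$-linear, which is where "under the Kronecker product" enters: bilinearity of the Kronecker product gives $(\alpha w_1+w_2)\otimes G(\lam)=\alpha\bigl(w_1\otimes G(\lam)\bigr)+w_2\otimes G(\lam)$ for all $w_1,w_2\in\C^k$ and $\alpha\in\C$, and prepending the fixed zero block respects this identity. Hence $\Phi$ is linear, and therefore $\Gamma_G=\operatorname{im}\Phi$ is a linear subspace of $\mathcal{M}$; this is precisely the assertion that $\Gamma_G$ is a vector space.

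Finally, to recover the companion fact that $\Gamma_G\cong\C^{k}$ (parallel to the remark following the lemma for $\Gamma_{\mathcal{S}}$), I would verify $\ker\Phi=\{0\}$. Since $G(\lam)$ is regular it has positive normal rank, so $G(\lam)$ is not the zero matrix and there is an entry $G(\lam)_{ij}\neq 0$ in $\C(\lam)$. If $\Phi(w)=0$ then $w\otimes G(\lam)=0$, which forces $w_\ell\,G(\lam)_{ij}=0$, hence $w_\ell=0$, for every $\ell$; thus $w=0$ and $\Phi$ is injective.

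I do not anticipate any genuine obstacle: the argument is routine bilinearity. The only points meriting care are naming the ambient space correctly (the lower blocks have rational, not polynomial, entries, so one works over $\C(\lam)$) and invoking $G(\lam)\neq 0$, which is guaranteed by regularity (or, more generally, by $\nrank(G)\geq 1$).
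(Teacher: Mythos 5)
Your proposal is correct and rests on the same mechanism as the paper's proof, namely bilinearity of the Kronecker product in its first argument ($(\alpha w_1+w_2)\otimes G(\lambda)=\alpha\,w_1\otimes G(\lambda)+w_2\otimes G(\lambda)$); the paper verifies closure under addition and scalar multiplication directly, while you package the identical computation as linearity of the map $w\mapsto\bigl[\begin{smallmatrix}0\\ w\otimes G(\lambda)\end{smallmatrix}\bigr]$ and take its image. Your extra injectivity step (using $G(\lambda)\neq 0$) is not needed for the lemma itself but correctly establishes the isomorphism $\Gamma_G\cong\C^{k}$ asserted in the paper's subsequent remark, where the stated $\C^{m+k}$ appears to be a slip.
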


\begin{proof}
Let  
{\footnotesize $\left[\begin{array}{c}
                  0\\
                 v_{1}\otimes G(\lambda)\\
 \end{array}\right]$} and 
{\footnotesize $\left[\begin{array}{c}
                 0 \\
                 v_{2}\otimes G(\lambda) \\
 \end{array}\right]  \in \Gamma_{G}$ for $v_{1}, v_{2} \in \mathbb{C}^{k}$}.
Now \\
{\footnotesize $\left[\begin{array}{c}
                    0 \\
                    v_{1}\otimes G(\lambda)\\
                \end{array}\right] 
             +  \left[\begin{array}{c}
                    0 \\
                    v_{2}\otimes G(\lambda) \\
                \end{array}\right] 
             = \left[\begin{array}{c}
                    0 \\
                    v_{1}\otimes G(\lambda)+ v_{2}\otimes G(\lambda) \\
                \end{array}\right]
             = \left[\begin{array}{c}
                    0 \\
                    (v_{1}+ v_{2})\otimes G(\lambda) \\
                \end{array}\right]$},
by the properties of kronecker product and $v_{1}+v_{2}\in \mathbb{C}^{k}$. So
{\footnotesize $\left[\begin{array}{c}
                     0 \\
                    (v_{1}+ v_{2})\otimes G(\lambda) \\
                \end{array}\right] \in \Gamma_{G}$}. 
Similarly, for any scalar $\alpha$ and 
{\footnotesize $\left[\begin{array}{c}
                                 0 \\
                                 w\otimes G(\lambda) \\
                      \end{array}\right]\in \Gamma_{G}$}, 
we have {\footnotesize $\alpha\left[\begin{array}{c}
                                 0 \\
                                 w\otimes G(\lambda) \\
                      \end{array}\right]\in \Gamma_{G}$}.
All other properties of vector space are satisfying from the properties of  Kronecker product. So $\Gamma_{G}$ is a vector space.
    
\end{proof}

Notice that from the properties of Kronecker product it is easy to see that $\Gamma_{G}$ is a vector space isomorphic to $\C^{m+k}$. Now, we have the following definition.

\begin{definition}
Consider the classes of pencils
 $\mathbb{L}_{1}(G) 
     := \biggl\{\mathbb{L}(\lambda) 
      = \lambda \mathbb{X} + \mathbb{Y} 
     :=\lambda \left[\begin{array}{c|c}
                 X_{11} & 0 \\\hline
                 0 & X_{22} \\
               \end{array}\right]
        + \left[\begin{array}{c|c}
                 Y_{11} & Y_{12} \\ \hline
                 Y_{21} & Y_{22} \\
          \end{array}\right] 
    : \mathbb{X}, \mathbb{Y} \in \mathbb{C}^{(mn +kr)\times (mn +kr)}, 
     \mathbb{L}(\lambda)
     \left[\begin{array}{c}
            \Lambda_{m-1} \otimes A(\lambda)^{-1}B \\
            \Lambda_{k-1} \otimes I_r
     \end{array}\right]  \in \Gamma_{G}\biggr\}. $   
\end{definition}

%That is,  $\mathbb{L}_{1}(G) := \{\mathbb{L}(\lambda) = \lambda \mathbb{X} + \mathbb{Y} :=\left[
%                                                        \begin{array}{c|c}
%                                                        L(\lambda) & \mathcal{B} \\
%                                                         \hline
%                                                       \mathcal{C} & K(\lambda) \\
%                                                          \end{array}
%                                                        \right] : (L(\lambda), v) \in %\mathcal{L}_{1}(P),  \,\,  (K(\lambda), w) \in \mathcal{L}_{1}(P)\}.$

\begin{proposition}
For any rational matrix $G(\lambda)$,  $\mathbb{L}_{1}(G)$ is a vector space over $\mathbb{C}$. 
\end{proposition}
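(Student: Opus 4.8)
The plan is to show directly that $\mathbb{L}_1(G)$ is a linear subspace of the space of $(mn+kr)\times(mn+kr)$ matrix pencils over $\mathbb{C}$, so it suffices to check that it is nonempty and closed under addition and scalar multiplication. First I would observe that $\mathbb{L}_1(G)$ is nonempty: the first companion pencil $C_1(\lambda)$ exhibited just above the definition satisfies
\[
C_1(\lambda)\left[\begin{array}{c}\Lambda_{m-1}\otimes A(\lambda)^{-1}B\\ \Lambda_{k-1}\otimes I_r\end{array}\right]=\left[\begin{array}{c}0\\ \hline e_1\otimes G(\lambda)\end{array}\right]\in\Gamma_G,
\]
and $C_1(\lambda)$ has the required block-diagonal leading coefficient, so $C_1(\lambda)\in\mathbb{L}_1(G)$.

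Next I would verify closure. The key point is that the map
\[
\mathcal{T}:\ \mathbb{L}(\lambda)\longmapsto \mathbb{L}(\lambda)\left[\begin{array}{c}\Lambda_{m-1}\otimes A(\lambda)^{-1}B\\ \Lambda_{k-1}\otimes I_r\end{array}\right]
\]
is linear in $\mathbb{L}(\lambda)$: for pencils $\mathbb{L}_1(\lambda),\mathbb{L}_2(\lambda)$ and a scalar $\alpha\in\mathbb{C}$, right-multiplication by the fixed rational matrix $\left[\begin{array}{c}\Lambda_{m-1}\otimes A(\lambda)^{-1}B\\ \Lambda_{k-1}\otimes I_r\end{array}\right]$ distributes over sums and commutes with scalars, so $\mathcal{T}(\alpha\mathbb{L}_1+\mathbb{L}_2)=\alpha\,\mathcal{T}(\mathbb{L}_1)+\mathcal{T}(\mathbb{L}_2)$. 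Moreover the block-diagonal constraint on the leading coefficient $\mathbb{X}=\operatorname{diag}(X_{11},X_{22})$ is itself a linear condition, hence preserved under linear combinations. Since $\Gamma_G$ is a vector space (established in the preceding lemma), if $\mathbb{L}_1(\lambda),\mathbb{L}_2(\lambda)\in\mathbb{L}_1(G)$ then $\mathcal{T}(\alpha\mathbb{L}_1+\mathbb{L}_2)=\alpha\,\mathcal{T}(\mathbb{L}_1)+\mathcal{T}(\mathbb{L}_2)\in\Gamma_G$, and $\alpha\mathbb{X}_1+\mathbb{X}_2$ is still block diagonal, so $\alpha\mathbb{L}_1(\lambda)+\mathbb{L}_2(\lambda)\in\mathbb{L}_1(G)$. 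Together with nonemptiness this shows $\mathbb{L}_1(G)$ is a subspace of the (finite-dimensional) space of matrix pencils of the given size, hence a vector space over $\mathbb{C}$.

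I do not expect any serious obstacle here; the statement is essentially a formal consequence of the linearity of $\mathcal{T}$ and of the defining constraints. The only point requiring a little care is to make sure the ambient object is well defined — namely, that one interprets $\mathbb{L}_1(G)$ as a subset of pencils $\lambda\mathbb{X}+\mathbb{Y}$ with $\mathbb{X},\mathbb{Y}$ complex matrices of fixed size, so that "vector space" means "linear subspace" and one really does only have to check closure; and to note that although the auxiliary matrix $\left[\begin{array}{c}\Lambda_{m-1}\otimes A(\lambda)^{-1}B\\ \Lambda_{k-1}\otimes I_r\end{array}\right]$ has rational entries, the argument is unaffected because we never invert it — we only use that right-multiplication by it is a fixed linear operation. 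If one wanted a cleaner formulation one could simply say: $\mathbb{L}_1(G)=\mathcal{T}^{-1}(\Gamma_G)\cap\{\lambda\mathbb{X}+\mathbb{Y}:\mathbb{X}\text{ block diagonal}\}$ is an intersection of preimages of subspaces under linear maps, hence a subspace.
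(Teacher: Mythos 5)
Your proof is correct. The paper actually states this proposition without any proof at all, but your argument is precisely the one it relies on implicitly and carries out explicitly elsewhere: the linearity of the map $\mathbb{L}(\lambda)\mapsto \mathbb{L}(\lambda)\bigl[\begin{smallmatrix}\Lambda_{m-1}\otimes A(\lambda)^{-1}B\\ \Lambda_{k-1}\otimes I_r\end{smallmatrix}\bigr]$ is verified in exactly this way in the proof of the characterization theorem for $\mathbb{L}_1(G)$, and the nontriviality via $C_1(\lambda)\in\mathbb{L}_1(G)$ is the content of the remark following the proposition. Your closing reformulation of $\mathbb{L}_1(G)$ as the intersection of the preimage of the subspace $\Gamma_G$ with the (linear) block-diagonality constraint on $\mathbb{X}$ is a clean way to package the whole argument.
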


\begin{remark}
 Since $C_1(\lambda)$ is always in $\mathbb{L}_{1}(G)$, we see that $\mathbb{L}_{1}(G)$ is a nontrivial vector space for any rational matrix $G$.   
\end{remark}

\begin{lemma}\label{eqdL1g}
 Let $G(\lam) = C A(\lam)^{-1} B + D(\lambda) $  be any rational matrix and $\mathbb{L}(\lambda) = \lambda \mathbb{X} + \mathbb{Y}  = \lambda \left[
                                                  \begin{array}{c|c}
                                                    X_{11} & 0 \\
                                                    \hline
                                                    0 & X_{22} \\
                                                  \end{array}
                                                \right] + \left[
                                                        \begin{array}{c|c}
                                                         Y_{11} & -v e_{k}^{T} \otimes B \\
                                                         \hline
                                                        we_{m}^{T}\otimes C & Y_{22} \\
                                                          \end{array}
                                                        \right]\mathbb{C}^{mn+kr \times mn+kr}$  be any pencil. 
Then for $v \in \mathbb{C}^{m}$, $w \in \mathbb{C}^{k}$, we have 
            $$(\lambda \mathbb{X} +\mathbb{Y}).\left[
                         \begin{array}{c}
                         \left(\Lambda_{m-1} \otimes  A(\lambda)^{-1}B\right)\\ \hline
                         \Lambda_{k-1} \otimes I_r
                         \end{array}
              \right]=\left[
                         \begin{array}{c}
                         0\\
                         \hline
                          w \otimes G(\lambda)
                         \end{array}
              \right] \iff $$ $$  X \dashv\vdash Y =\left[
                                                  \begin{array}{c|c}
                                                   X_{11} \boxplus Y_{11} = v\otimes [A_{m}, A_{m-1}, \ldots , A_{0}] & -v e_{k+1}^{T} \otimes B \\
                                                    \hline
                                                    we_{m+1}^{T}\otimes C & X_{22} \boxplus Y_{22} = w\otimes [D_{k}, D_{k-1}, \ldots , D_{0}] \\
                                                  \end{array}
                                                \right]                                                            
 $$
\end{lemma}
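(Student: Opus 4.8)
The plan is to exploit the block $2\times 2$ structure shared by every object in the statement and reduce the claimed equivalence to a pair of ``scalar-free'' block identities, each governed by the column shifted sum characterization of $\mathcal{L}_1$ from \cite{mmmm06}. Write $N(\lam):=A(\lam)^{-1}B$, so that $A(\lam)N(\lam)=B$ and $CN(\lam)+D(\lam)=G(\lam)$, and let $T(\lam)$ be the test vector obtained by stacking $\Lambda_{m-1}\otimes N(\lam)$ (top, $mn$ rows) on top of $\Lambda_{k-1}\otimes I_r$ (bottom, $kr$ rows). First I would record the elementary Kronecker facts $e_k^T\Lambda_{k-1}=1$ and $e_m^T\Lambda_{m-1}=1$, which give $(-ve_k^T\otimes B)(\Lambda_{k-1}\otimes I_r)=-(v\otimes B)$ and $(we_m^T\otimes C)(\Lambda_{m-1}\otimes N(\lam))=w\otimes(CN(\lam))$, together with the shifted-sum identities $0\boxplus(ve_k^T\otimes B)=ve_{k+1}^T\otimes B$ and $0\boxplus(we_m^T\otimes C)=we_{m+1}^T\otimes C$. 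Since $\mathbb{X}$ is block diagonal, $\mathbb{X}\dashv\vdash\mathbb{Y}$ has diagonal blocks $X_{11}\boxplus Y_{11}$, $X_{22}\boxplus Y_{22}$ and off-diagonal blocks $0\boxplus(-ve_k^T\otimes B)$, $0\boxplus(we_m^T\otimes C)$; by the two shifted-sum identities the off-diagonal blocks automatically equal $-ve_{k+1}^T\otimes B$ and $we_{m+1}^T\otimes C$, i.e. the off-diagonal entries of the target matrix, for every $\mathbb{L}(\lam)$ of the prescribed form. Hence the right-hand side of the equivalence is equivalent to the two conditions $X_{11}\boxplus Y_{11}=v\otimes[A_m,\dots,A_0]$ and $X_{22}\boxplus Y_{22}=w\otimes[D_k,\dots,D_0]$, i.e. to $(\lam X_{11}+Y_{11},v)\in\mathcal{L}_1(A)$ and $(\lam X_{22}+Y_{22},w)\in\mathcal{L}_1(D)$.

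The bottom block is the clean half. Using $(we_m^T\otimes C)(\Lambda_{m-1}\otimes N(\lam))=w\otimes(CN(\lam))$, the bottom block of $\mathbb{L}(\lam)T(\lam)$ equals $w\otimes(CN(\lam))+(\lam X_{22}+Y_{22})(\Lambda_{k-1}\otimes I_r)$, while the bottom block of the right-hand side is $w\otimes G(\lam)=w\otimes(CN(\lam))+w\otimes D(\lam)$. Cancelling the common summand $w\otimes(CN(\lam))$, the bottom-block equation reduces \emph{exactly} to $(\lam X_{22}+Y_{22})(\Lambda_{k-1}\otimes I_r)=w\otimes D(\lam)$, which, on expanding in powers of $\lam$ and matching coefficients, is precisely the $\mathcal{L}_1$ characterization of \cite{mmmm06}, hence equivalent to $X_{22}\boxplus Y_{22}=w\otimes[D_k,\dots,D_0]$. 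For the top block, the same reductions give that its value is $\bigl[(\lam X_{11}+Y_{11})(\Lambda_{m-1}\otimes I_n)\bigr]N(\lam)-(v\otimes B)$, and since $v\otimes B=(v\otimes A(\lam))N(\lam)$ this equals $\bigl[(\lam X_{11}+Y_{11})(\Lambda_{m-1}\otimes I_n)-v\otimes A(\lam)\bigr]N(\lam)$. If $X_{11}\boxplus Y_{11}=v\otimes[A_m,\dots,A_0]$ then, by \cite{mmmm06}, the bracketed polynomial is identically zero, so the top block of $\mathbb{L}(\lam)T(\lam)$ vanishes; combined with the bottom-block analysis and the automatic off-diagonal blocks, this proves ``right-hand side $\Rightarrow$ left-hand side''.

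The converse for the top block is the step I expect to be the main obstacle. From the left-hand side one extracts only $\bigl[(\lam X_{11}+Y_{11})(\Lambda_{m-1}\otimes I_n)-v\otimes A(\lam)\bigr]A(\lam)^{-1}B=0$, and to reach $X_{11}\boxplus Y_{11}=v\otimes[A_m,\dots,A_0]$ one must first promote this to the polynomial identity $(\lam X_{11}+Y_{11})(\Lambda_{m-1}\otimes I_n)=v\otimes A(\lam)$ and then invoke \cite{mmmm06}. This promotion is the genuinely delicate point: the rational matrix $A(\lam)^{-1}B$ has normal rank equal to $\rank(B)$, which may be strictly less than $n$, so it is not of full row rank and the factor $A(\lam)^{-1}B$ cannot be cancelled formally. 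The natural way to handle it is to combine the degree bound on the bracketed pencil-valued polynomial with a minimality/full-rank hypothesis on the realization of $G(\lam)$ (or, failing that, to record that in the applications of the lemma it is the ``$\Leftarrow$'' direction that is used, so that this subtlety does not affect the remaining arguments); once the bracketed polynomial is shown to vanish, \cite{mmmm06} converts it into $X_{11}\boxplus Y_{11}=v\otimes[A_m,\dots,A_0]$, and reassembling the four blocks completes the equivalence.
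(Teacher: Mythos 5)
Your reduction of the statement to block computations is correct and, for the direction that matters, coincides with what the paper actually does: the paper states Lemma~\ref{eqdL1g} without proof, and the only argument it supplies anywhere is the subsequent (unnumbered) theorem establishing the ``$\Leftarrow$'' implication by exactly your computation, namely $(-ve_k^T\otimes B)(\Lambda_{k-1}\otimes I_r)=-v\otimes B$, $(we_m^T\otimes C)(\Lambda_{m-1}\otimes A(\lambda)^{-1}B)=w\otimes\bigl(CA(\lambda)^{-1}B\bigr)$, and $(v\otimes A(\lambda))A(\lambda)^{-1}B=v\otimes B$. Your observations that the off-diagonal blocks of $\mathbb{X}\dashv\vdash\mathbb{Y}$ are automatically $-ve_{k+1}^T\otimes B$ and $we_{m+1}^T\otimes C$ for every pencil of the prescribed shape, and that the bottom-block equation is genuinely equivalent (via \cite{mmmm06}) to $X_{22}\boxplus Y_{22}=w\otimes[D_k\cdots D_0]$, are both right.

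The obstruction you isolate in the ``$\Rightarrow$'' direction for the top block is a real defect of the lemma as stated, not a gap you have failed to close. From the left-hand side one only gets $\bigl[(\lambda X_{11}+Y_{11})(\Lambda_{m-1}\otimes I_n)-v\otimes A(\lambda)\bigr]A(\lambda)^{-1}B=0$, and since the image of $(X_{11},Y_{11})\mapsto(\lambda X_{11}+Y_{11})(\Lambda_{m-1}\otimes I_n)$ is all $mn\times n$ polynomials of degree at most $m$, one can add to a valid pencil any term of the form $zu^TA(\lambda)$ with $u^TB=0$ and preserve the left-hand side while destroying $X_{11}\boxplus Y_{11}=v\otimes[A_m\cdots A_0]$; the extreme case $B=0$ makes the top-block equation vacuous. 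So the equivalence requires $\operatorname{rank}(B)=n$ (full row rank, so that $NA(\lambda)^{-1}B=0$ forces $N=0$), or the lemma should be weakened to the one-directional statement that is actually used in the rest of the paper. Your proposal is therefore correct modulo this caveat, which you state explicitly; I would only add that you should commit to one of the two fixes rather than leaving both options open, since the characterization in Theorem~\ref{chtL1G} implicitly relies on the second (equivalent) definition of $\mathbb{L}_1(G)$ and is unaffected either way.
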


Now, we alternatively characterized the space $\mathbb{L}_{1}(G)$  as follows:

\begin{definition}
Consider  the classes of pencils
{\scriptsize
\begin{align*}
& \mathbb{L}_{1}(G) 
     := \biggl\{\mathbb{L}(\lambda) 
      = \lambda \mathbb{X} + \mathbb{Y} 
      = \lambda \left[\begin{array}{c|c}
                X_{11} & 0 \\ \hline
                0 & X_{22} \\
            \end{array}\right] 
     + \left[\begin{array}{c|c}
            Y_{11} & -v e_{k}^{T} \otimes B \\\hline
            we_{m}^{T}\otimes C & Y_{22} \\
        \end{array}\right] :  \\ 
&\hspace*{2cm}
    X_{11} \boxplus Y_{11} 
 = v\otimes \left[\begin{array}{cccc} A_{m}  & \cdots & A_{0} \end{array}\right] 
 \quad \mbox{and} \quad 
 X_{22} \boxplus Y_{22} = w\otimes \left[\begin{array}{cccc} D_{k}  & \cdots & D_{0} \end{array}\right] \biggr \}.
\end{align*}}
Equivalently,  
\begin{align*}
&\mathbb{L}_{1}(G) 
      := \biggl\{\left[\begin{array}{c|c}
                L(\lambda) & -v e_{k}^{T} \otimes B  \\\hline
                we_{m}^{T} \otimes C & K(\lambda) \\
        \end{array}\right] :  
(L(\lambda), v) \in \mathcal{L}_{1}(A),  \quad  (K(\lambda), w) \in \mathcal{L}_{1}(D)\biggr\}.
\end{align*}
\end{definition}
%
%Then 
%$$\left(\lambda \mathbb{X} + \mathbb{Y}\right)
%       \left[\begin{array}{c}
%         \Lambda_{m-1} \otimes  A(\lambda)^{-1}B\\ \hline
%         \Lambda_{k-1} \otimes I_r
%       \end{array}\right]
%=\left[\begin{array}{c}
%        0\\ \hline
%        w \otimes G(\lambda)
 %\end{array}\right].$$

\begin{theorem}
Let 
$\mathbb{L}(\lambda)
    =\left[\begin{array}{c|c}
         L(\lambda) & -ve_k^T\otimes B \\ \hline
         we_m^T\otimes C & K(\lambda)
    \end{array}\right]\in \mathbb{L}_{1}(G)$.
Then   
$$\mathbb{L}(\lambda) 
     \left[\begin{array}{c}
            \Lambda_{m-1} \otimes  A(\lambda)^{-1}B\\ \hline
            \Lambda_{k-1} \otimes I_r
     \end{array}\right]
 =\left[\begin{array}{c}
            0\\\hline
            w \otimes G(\lambda)
  \end{array}\right].$$
\end{theorem}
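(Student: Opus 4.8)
The plan is to reduce the block identity to two separate block-row computations, using the structure of $\mathbb{L}(\lambda)$ and the defining properties of $\mathcal{L}_1(A)$ and $\mathcal{L}_1(D)$. Write $\mathbb{L}(\lambda)$ in block form with diagonal blocks $L(\lambda)\in\mathcal{L}_1(A)$ (right ansatz vector $v$) and $K(\lambda)\in\mathcal{L}_1(D)$ (right ansatz vector $w$), and off-diagonal blocks $-ve_k^T\otimes B$ and $we_m^T\otimes C$. Multiplying by the block vector $\bigl[(\Lambda_{m-1}\otimes A(\lambda)^{-1}B)^T \mid (\Lambda_{k-1}\otimes I_r)^T\bigr]^T$, the top block of the product is $L(\lambda)(\Lambda_{m-1}\otimes A(\lambda)^{-1}B) + (-ve_k^T\otimes B)(\Lambda_{k-1}\otimes I_r)$ and the bottom block is $(we_m^T\otimes C)(\Lambda_{m-1}\otimes A(\lambda)^{-1}B) + K(\lambda)(\Lambda_{k-1}\otimes I_r)$. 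I would handle these two blocks in turn.

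For the top block: since $(L(\lambda),v)\in\mathcal{L}_1(A)$, we have $L(\lambda)(\Lambda_{m-1}\otimes I_n) = v\otimes A(\lambda)$, hence multiplying on the right by $I_m\otimes(A(\lambda)^{-1}B)$ and using $(\Lambda_{m-1}\otimes I_n)(I_m\otimes A(\lambda)^{-1}B)=\Lambda_{m-1}\otimes A(\lambda)^{-1}B$ together with the mixed-product rule gives $L(\lambda)(\Lambda_{m-1}\otimes A(\lambda)^{-1}B) = (v\otimes A(\lambda))(A(\lambda)^{-1}B) = v\otimes B$. For the second term, $(ve_k^T\otimes B)(\Lambda_{k-1}\otimes I_r) = v(e_k^T\Lambda_{k-1})\otimes B = v\otimes B$ since $e_k^T\Lambda_{k-1}=1$ (the last entry of $\Lambda_{k-1}$). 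Thus the top block equals $v\otimes B - v\otimes B = 0$, as required.

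For the bottom block: $(we_m^T\otimes C)(\Lambda_{m-1}\otimes A(\lambda)^{-1}B) = w(e_m^T\Lambda_{m-1})\otimes CA(\lambda)^{-1}B = w\otimes CA(\lambda)^{-1}B$, again since $e_m^T\Lambda_{m-1}=1$. And since $(K(\lambda),w)\in\mathcal{L}_1(D)$, $K(\lambda)(\Lambda_{k-1}\otimes I_r) = w\otimes D(\lambda)$. Adding, the bottom block is $w\otimes CA(\lambda)^{-1}B + w\otimes D(\lambda) = w\otimes\bigl(CA(\lambda)^{-1}B+D(\lambda)\bigr) = w\otimes G(\lambda)$. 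Combining the two blocks yields the claimed identity. The only mildly delicate points are keeping the Kronecker mixed-product identities and the compatibility of block sizes straight, and observing the two scalar contractions $e_m^T\Lambda_{m-1}=e_k^T\Lambda_{k-1}=1$; none of this is a real obstacle, so the proof is essentially a bookkeeping verification once the block multiplication is set up. Alternatively, one can invoke Lemma~\ref{eqdL1g} directly: membership in $\mathbb{L}_1(G)$ is defined precisely by this right-ansatz identity, so the theorem is really a restatement, and the content is the equivalence already recorded there.
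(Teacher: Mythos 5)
Your computation is correct and follows essentially the same route as the paper: block-multiply, use $L(\lambda)(\Lambda_{m-1}\otimes I_n)=v\otimes A(\lambda)$ and $K(\lambda)(\Lambda_{k-1}\otimes I_r)=w\otimes D(\lambda)$, and contract via $e_m^T\Lambda_{m-1}=e_k^T\Lambda_{k-1}=1$ to get the cancellation $v\otimes B-v\otimes B=0$ in the top block and $w\otimes(CA(\lambda)^{-1}B+D(\lambda))=w\otimes G(\lambda)$ in the bottom block. This matches the paper's proof step for step.
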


\begin{proof}
We have
\begin{align*}
       &\mathbb{L}(\lambda)
          \left[\begin{array}{c}
          \Lambda_{m-1} \otimes  A(\lambda)^{-1}B\\ \hline
          \Lambda_{k-1} \otimes I_r
    \end{array}\right]\\
=& \left[\begin{array}{c|c}
       L(\lambda) & -ve_k^T\otimes B \\ \hline
       we_m^T\otimes C & K(\lambda)
    \end{array}\right]
    \left[\begin{array}{c}
        \Lambda_{m-1} \otimes  A(\lambda)^{-1}B\\ \hline
        \Lambda_{k-1} \otimes I_r
    \end{array}\right]\\
=& \left[\begin{array}{c}
        L(\lambda) (\Lambda_{m-1}\otimes I_n)A(\lambda)^{-1}B-ve_k^T\Lambda_{k-1}\otimes B \\ \hline
        (we_m^T\Lambda_{m-1}\otimes C)A(\lambda)^{-1}B+K(\lambda)(\Lambda_{k-1}\otimes I_r
    \end{array}\right]\\
=& \left[\begin{array}{c}
        (v\otimes A(\lambda))A(\lambda)^{-1}B-v\otimes B\\ \hline
        (w\otimes C)A(\lambda)^{-1}B+w\otimes D(\lambda)\end{array}\right]\\
=& \left[\begin{array}{c} 
        v\otimes B-v\otimes B \\ \hline
        w\otimes (CA(\lambda)^{-1}B+D(\lambda))
    \end{array}\right]
= \left[\begin{array}{c} 
         0 \\ \hline
         w\otimes G(\lambda)
    \end{array}\right]
\end{align*} 
\end{proof}

\begin{exam}
Consider $G(\lambda) = C(A_0 - \lambda A_1)^{-1} B + D_2 \lambda^{2} + D_1 \lambda + D_0 $ and 
let 
$$X = \left[\begin{array}{c|cc}
               -A_1 & 0 & 0 \\\hline
                0  & D_{2} & D_{1}+D_{0}  \\
                0  &  D_{2} & 2D_{1}+D_{2}  \\
        \end{array}\right] \quad \mbox{and}\quad 
Y = \left[\begin{array}{c|cc}
          A_0 & 0 & -B  \\\hline
          C & - D_{0}& D_{0}  \\
          C & -D_{2}-D_{1} & D_{0}  \\
    \end{array}\right].$$
Then 
$$(\lambda X +Y)\left[\begin{array}{c}
                        (A_0-\lambda A_1)^{-1}B \\
                         \lambda I \\
                         I \\
                       \end{array}
                     \right]
= \left[\begin{array}{c}
            0 \\\hline
            G(\lambda) \\
            G(\lambda) \\
  \end{array}\right] 
= \left[\begin{array}{c}
            0 \\\hline
            \left[\begin{array}{c}
                1 \\
                1 \\
            \end{array}\right]\otimes
            G(\lambda)
            \end{array}\right]. $$                                 
\end{exam}

\begin{exam}
Consider $G(\lambda) = C(A_0 + \lambda A_1 +\lambda^2 A_2)^{-1} B + D_2 \lambda^{2} + D_1 \lambda + D_0 $ and 
let 
{\scriptsize $$X =  \left[\begin{array}{cc|cc}
          A_2 & A_1 +A_0 & 0 &  0 \\
          A_2 & 2 A_1 +A_2 &  0 &  0 \\\hline
          0 & 0  & D_{2} & D_{1}+D_{0}  \\
          0 & 0  &  D_{2} & 2D_{1}+D_{2}  \\
      \end{array}\right] 
\quad \mbox{and}\quad 
Y =\left[\begin{array}{cc|cc}
        -A_0 & A_0  & 0 & -B  \\
         -A_2- A_1 & A_0  & 0 & -B  \\\hline
         0 & C & - D_{0}& D_{0}  \\
         0 &  C & -D_{2}-D_{1} & D_{0}  \\
    \end{array}\right].$$ }
Then 
$$(\lambda X +Y)\left[\begin{array}{c}
                \lambda (A_0+\lambda A_1+ \lambda^2 A_2)^{-1}B \\
                (A_0+\lambda A_1+ \lambda^2 A_2)^{-1}B \\
                \lambda I \\
                I \\
            \end{array}\right] 
= \left[\begin{array}{c}
            0 \\
            0 \\\hline
            G(\lambda) \\
            G(\lambda) \\
  \end{array}\right] 
= \left[\begin{array}{c}
            0 \\
            0 \\\hline
            \left[\begin{array}{c}
            1 \\
            1 \\
            \end{array}\right]\otimes G(\lambda)
  \end{array}\right]$$
%{\bf Check the example}
\end{exam}

\begin{theorem}
[Characterization of pencils in $\mathbb{L}_{1}(G)$] \label{chtL1G}
Let $G(\lambda):=D(\lambda)+C(A(\lambda))^{-1}B$ be an $r\times r$
  rational matrix, and 
  $\left[\begin{array}{c}
        v \\
        w \\
    \end{array}\right]\in\, \mathbb{C}^{m+k}$
any vector. Then the set of pencils in $\mathbb{L}_{1}(G)$ with right ansatz vector 
 $\left[\begin{array}{c}
       v \\
       w \\
  \end{array}\right]$ 
consists of all $\mathbb{L}(\lambda)= \lambda  \mathbb{X}+\mathbb{Y}$ such that
\[\mathbb{X} = \bordermatrix{ & n  & (m-1)n & r  & (k-1)r \cr
                   &  v\otimes A_{m} & W   & 0 & 0\cr
                   & 0 & 0 & w\otimes D_{k} & W_{1} \cr }\]
and 
\[\mathbb{Y} = \bordermatrix{ &  (m-1)n & n  &  (k-1)r & r\cr
          &  v \otimes [A_{m-1} \cdots A_{1} ] -W & v \otimes A_{0}  & 0 & 
          -ve_{k}^{T}\otimes B  \cr
        & 0 & we_{m}^{T}\otimes C  & -W_{1}+ w \otimes [D_{k-1} \cdots D_{1}] &  w \otimes D_{0}\cr}\]
with $W\in\, \mathbb{F}^{(nm)\times n(m-1)}$, $W_{1}\in \mathbb{F}^{(rk)\times r(k-1)}$ chosen arbitrarily.
\end{theorem}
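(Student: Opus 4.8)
The plan is to reduce the characterization of $\mathbb{L}_1(G)$ to the already–established characterization of $\mathbb{L}_1(\mathcal{S})$ in Theorem~\ref{cl1s}, together with Lemma~\ref{eqdL1g}. Since by the alternative (equivalent) definition every pencil in $\mathbb{L}_1(G)$ has the block form
$\mathbb{L}(\lambda)=\left[\begin{smallmatrix} L(\lambda) & -ve_k^T\otimes B \\ we_m^T\otimes C & K(\lambda)\end{smallmatrix}\right]$
with $(L(\lambda),v)\in\mathcal{L}_1(A)$ and $(K(\lambda),w)\in\mathcal{L}_1(D)$, the ansatz vector $\left[\begin{smallmatrix}v\\w\end{smallmatrix}\right]$ in the right ansatz for $\mathbb{L}_1(G)$ is exactly the same pair of ansatz vectors governing the diagonal blocks. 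So the statement to be proved is: for fixed $(v,w)$, the set of such pencils is precisely the displayed two–parameter family in $W, W_1$.

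First I would invoke Lemma~\ref{eqdL1g}, which says that membership in $\mathbb{L}_1(G)$ with ansatz vector $\left[\begin{smallmatrix}v\\w\end{smallmatrix}\right]$ is equivalent to the block column shifted sum identity
$$\mathbb{X}\dashv\vdash\mathbb{Y}=\left[\begin{array}{c|c} v\otimes[A_m,\dots,A_0] & -ve_{k+1}^T\otimes B \\ \hline we_{m+1}^T\otimes C & w\otimes[D_k,\dots,D_0]\end{array}\right],$$
where $\mathbb{X}=\diag(X_{11},X_{22})$ and $\mathbb{Y}$ has the off-diagonal blocks $-ve_k^T\otimes B$, $we_m^T\otimes C$ fixed. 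By the definition of $\dashv\vdash$, this single matrix identity decouples into four scalar-block equations: $X_{11}\boxplus Y_{11}=v\otimes[A_m,\dots,A_0]$, $X_{22}\boxplus Y_{22}=w\otimes[D_k,\dots,D_0]$, and two trivial identities for the off-diagonal blocks that are automatically satisfied by the prescribed form (the shifted sums of $0$ with $-ve_k^T\otimes B$ and with $we_m^T\otimes C$ produce exactly $-ve_{k+1}^T\otimes B$ and $we_{m+1}^T\otimes C$).

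Next I would apply the classical characterization of $\mathcal{L}_1$ from \cite{mmmm06} (Theorem~3.5 there): $(L(\lambda),v)\in\mathcal{L}_1(A)$ if and only if $L(\lambda)=\lambda X_{11}+Y_{11}$ with $X_{11}=[\,v\otimes A_m \mid W\,]$ and $Y_{11}=[\,v\otimes[A_{m-1}\cdots A_1]-W \mid v\otimes A_0\,]$ for an arbitrary $W\in\mathbb{F}^{mn\times(m-1)n}$; likewise $(K(\lambda),w)\in\mathcal{L}_1(D)$ if and only if $K(\lambda)=\lambda X_{22}+Y_{22}$ with $X_{22}=[\,w\otimes D_k \mid W_1\,]$ and $Y_{22}=[\,w\otimes[D_{k-1}\cdots D_1]-W_1 \mid w\otimes D_0\,]$ for arbitrary $W_1\in\mathbb{F}^{rk\times(k-1)r}$. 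Assembling $\mathbb{X}=\diag(X_{11},X_{22})$ and $\mathbb{Y}$ with these diagonal blocks together with the fixed off-diagonal blocks $-ve_k^T\otimes B$ and $we_m^T\otimes C$ yields exactly the bordered matrices in the statement; conversely any pencil of that form has diagonal blocks satisfying the $\mathcal{L}_1$ ansatz, hence lies in $\mathbb{L}_1(G)$ by the equivalent definition. This establishes both inclusions.

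The only real care needed — and the step I expect to be the main (minor) obstacle — is the bookkeeping of the shifted-sum arithmetic for the off-diagonal blocks: one must check that within the block column shifted sum $\dashv\vdash$ the $n\times(m-1)r$-shaped (in block terms) upper-right block $0\boxplus(-ve_k^T\otimes B)$ really lands on the last block-column so as to reproduce $-ve_{k+1}^T\otimes B$, and symmetrically for $we_m^T\otimes C$; this is where the index shift from $k$ to $k+1$ (respectively $m$ to $m+1$) comes from. Once that is verified, the result is immediate from Lemma~\ref{eqdL1g} and \cite{mmmm06}. Alternatively, and even more directly, one can simply restrict Theorem~\ref{cl1s} from $\mathcal{S}$ to $G$: the computation of $\ker\mathcal{F}$ there already produced precisely these bordered forms for $\mathbb{X}$ and $\mathbb{Y}$, so the present theorem is the specialization of that kernel description translated through the defining identity of $\mathbb{L}_1(G)$.
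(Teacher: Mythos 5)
Your proposal is correct, but it is organized differently from the paper's own argument. The paper proves Theorem~\ref{chtL1G} from scratch: it introduces the evaluation map $\mathcal{F}:\mathbb{L}_{1}(G)\to\Gamma_{G}$, $\mathbb{L}(\lambda)\mapsto \mathbb{L}(\lambda)\left[\begin{smallmatrix}\Lambda_{m-1}\otimes A(\lambda)^{-1}B\\ \Lambda_{k-1}\otimes I_r\end{smallmatrix}\right]$, verifies linearity, exhibits an explicit $\mathcal{F}$-preimage of each element of $\Gamma_G$ via the shifted-sum identity (reusing the computation from Theorem~\ref{cl1s}), and then computes $\ker\mathcal{F}$ directly from $\mathbb{X}\dashv\vdash\mathbb{Y}=0$ to obtain the free parameters $W,W_1$; the characterization is then ``particular solution plus kernel.'' You instead take the equivalent block description of $\mathbb{L}_1(G)$ (fixed off-diagonal blocks $-ve_k^T\otimes B$ and $we_m^T\otimes C$, diagonal blocks in $\mathcal{L}_1(A)$ and $\mathcal{L}_1(D)$), decouple the shifted-sum condition of Lemma~\ref{eqdL1g} into the two diagonal conditions, and import the parametrization of each diagonal block wholesale from Theorem~3.5 of \cite{mmmm06}. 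Your route is more modular and avoids redoing the kernel computation; what it costs is that it leans entirely on the asserted equivalence of the two definitions of $\mathbb{L}_1(G)$ (Lemma~\ref{eqdL1g} is stated in the paper without proof), whereas the paper's direct computation is self-contained relative to that lemma. Note also that the ``right ansatz vector'' $\left[\begin{smallmatrix}v\\w\end{smallmatrix}\right]$ is only visible through the block form, not through the image in $\Gamma_G$ (whose top component is always $0$), so your choice to anchor the argument in the equivalent block definition is in fact the cleaner way to make the statement precise; your closing observation that the result is the restriction of Theorem~\ref{cl1s} is also essentially what the paper does, since its proof here repeats that of Theorem~\ref{cl1s} almost verbatim with $A(\lambda)^{-1}B$ in place of $I_{r\times n}$.
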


\begin{proof}
Consider the map
$\mathcal{F}:\mathbb{L}_{1}(G) \longrightarrow \Gamma_{G}$ defined by

$$\mathcal{F}(L(\lambda)) 
   = L(\lambda)\left[\begin{array}{c}
        \Lambda_{m-1} \otimes A(\lambda)^{-1}B \\
        \Lambda_{k-1} \otimes I_r
    \end{array}\right].$$
Claim : $\mathcal{F}$ is linear and surjective. \\
Now,

\begin{align*}
\mathcal{F}\left( \left(\alpha L_{1}+L_{2}\right)(\lambda)\right) 
& = \left(\alpha L_{1}+L_{2}\right)(\lambda)
    \left[\begin{array}{c}
       \Lambda_{m-1} \otimes A(\lambda)^{-1}B \\
       \Lambda_{k-1} \otimes I_r
    \end{array}\right] \\
&= \left(\alpha L_{1}(\lambda)+L_{2}(\lambda)\right)
   \left[\begin{array}{c}
        \Lambda_{m-1} \otimes A(\lambda)^{-1}B \\
        \Lambda_{k-1} \otimes I_r
    \end{array}\right]\\
& = \alpha L_{1}(\lambda) 
     \left[\begin{array}{c}
        \Lambda_{m-1} \otimes A(\lambda)^{-1}B \\
        \Lambda_{k-1} \otimes I_r
      \end{array}\right]
  + L_{2}(\lambda)
   \left[\begin{array}{c}
        \Lambda_{m-1} \otimes A(\lambda)^{-1}B \\
        \Lambda_{k-1} \otimes I_r
    \end{array}\right] \\
&= \alpha \mathcal{F}(L_{1}(\lambda)) +  \mathcal{F}(L_{2}(\lambda))
\end{align*}
So, $\mathcal{F}$ is linear.
Let $\left[\begin{array}{c}
         0 \\
     w\otimes G(\lambda) \\
       \end{array}\right]$
be an arbitrary element of $\Gamma_{G}$. 
For $\left[\begin{array}{c}
    0 \\
  w \\
     \end{array}\right] \in \mathbb{C}^{m+k}$, construct 
 
$$ X_{\left[
     \begin{array}{c}
      v \\
      w \\
     \end{array}\right] }
 = \left[\begin{array}{cc|cc}  
        v \otimes A_{m} & 0  & 0 & 0 \\\hline
        0 & 0 &  w \otimes D_{k} & 0\\
    \end{array}\right] $$
and 
$$ Y_{\left[\begin{array}{c}
      v \\
      w \\
  \end{array}\right]} 
= \left[\begin{array}{cc|cc}
    v \otimes [A_{m-1} \cdots A_{1}]  &  v \otimes A_0 &  0 & -ve_{k}^{T} \otimes B  \\\hline
    0 & w e_{m}^{T} \otimes C & w \otimes [D_{k-1} \cdots D_{1}] & w \otimes D_0 \\
  \end{array}\right].$$
 %\textcolor{red}{Sizes of $X_{\left[\begin{array}{c} v \\ w\end{array}\right]}$ and $Y_{\left[\begin{array}{c} v \\ w\end{array}\right]}$ do not match. (now check it)}
 Then
 \begin{align*}
& X_{\left[\begin{array}{c}
        v \\
        w \\
      \end{array}\right]}\boxplus 
    Y_{\left[\begin{array}{c}
        v \\
        w \\
      \end{array}\right]}  
 &= \left[\begin{array}{c|c}
       v \otimes [A_{m} \,\, A_{m-1} \cdots A_{0}] &  v \otimes [0 \,\,\cdots 0 \,\, -e_{k}^{T} \otimes B] \\\hline
       w \otimes [0 \,\, \cdots 0 \,\,\, e_{m}^{T} \otimes C]  & w \otimes [D_{k} \,\, D_{k-1} \cdots D_{0}]  \\
    \end{array}\right] ,                                  
 \end{align*} see proof of Theorem~\ref{cl1s}.
So, by the Lemma~\ref{eqdL1g}  the pencil 
$\mathbb{L}_{\left[\begin{array}{c}
                        v \\
                        w \\
                \end{array}\right]}(\lambda) 
:= \lambda X_{\left[\begin{array}{c}
                v \\
                w \\
              \end{array}\right]}
+ Y_{\left[\begin{array}{c}
               v \\
               w \\
          \end{array}\right]}$
is an $\mathcal{F}$-preimage of 
$\left[\begin{array}{c}
         v \\
         w \\
 \end{array}\right]\otimes G(\lambda)$. So $\mathcal{F}$ is onto. The set of all $\mathcal{F}$-preimages of 
 $\left[\begin{array}{c}
         v \\
         w\\
\end{array}\right]\otimes G(\lambda)$ is then $\mathbb{L}_{\left[\begin{array}{c}
               v \\
               w \\
            \end{array}\right]}(\lambda)+\ker\mathcal{F}$.
So, now, we have to calculate $\ker\mathcal{F}$.

$$\ker\mathcal{F} = \left\{L(\lambda)\in \mathbb{L}_{1}(G) : \mathcal{F}(L(\lambda)) = 0\right\} =\{L(\lambda)\in \mathbb{L}_{1}(G) : \mathbb{X} \dashv\vdash \mathbb{Y} = 0\}.$$ 
%
%i.e., the kernel of $\mathcal{F}$ consists of all pencils $\lambda X+ Y$ such that $X\boxplus Y = 0$. Let
%$$ X = \left[
%       \begin{array}{cc}
 %        X_{11} & X_{12} \\
%         X_{21} & X_{22} \\
%       \end{array}
%     \right] \mbox{    and    } Y = \left[
%       \begin{array}{cc}
%         Y_{11} & Y_{12} \\
%         Y_{21} & Y_{22} \\
%       \end{array}
 %    \right]
 %$$
Next, we find the kernel of $\mathcal{F}$. Let 
$$\mathbb{L}(\lambda)=\lambda\mathbb{X}+\mathbb{Y}
                     =\left[\begin{array}{c|c}
                      \lambda X +Y & -ve_{k}^{T}\otimes B \\ \hline
                       we_{m}^{T}\otimes C & \lambda P + Q \end{array}\right] \in \ker \mathcal{F}.
                       $$
Then $\mathbb{X} \dashv\vdash \mathbb{Y} =0$ implies
$$\left[\begin{array}{c|c}
        X  & 0 \\ \hline
        0 &  P  
  \end{array}\right] \boxplus
  \left[\begin{array}{c|c}
        Y & -ve_{k}^{T}\otimes B \\ \hline
        we_{m}^{T}\otimes C &  Q 
  \end{array}\right]=0$$

\begin{align*}
& \implies 
  \left[\begin{array}{c|c}
     X\boxplus Y  & 0 \boxplus -ve_{k}^{T}\otimes B \\\hline
     0 \boxplus we^{T}_{m}\otimes C & P \boxplus Q
  \end{array}\right]=0\\   
& \implies  X\boxplus Y  = 0,\quad  P \boxplus Q= 0,\quad  -v e_{k}^{T}\otimes B=0,\quad\mbox{and}\quad  we_{m}^{T}\otimes C=0 \\
& \implies  
  X = \left[\begin{array}{cc}
         0 & - W_1 \\
       \end{array}\right] \quad \mbox{and}\quad 
  Y =  \left[\begin{array}{cc}
         W_1 & 0 \\
       \end{array}\right]  \\
& \quad\qquad  
  P =  \left[\begin{array}{cc}
        0 & - W_2 \\
       \end{array}\right] \quad\mbox{and}\quad 
  Q =  \left[\begin{array}{cc}
        W_2 & 0 \\
       \end{array}\right] 
\end{align*}

Thus the definition of shifted sum then implies that $X$ and $Y$ must have the form
$$ X = \left[\begin{array}{cc|cc}
         0 & W & 0 & 0\\\hline
        0 & 0 & 0 & W_{1} \\
       \end{array}\right] \quad\mbox{and}\quad 
   Y = \left[\begin{array}{cc|cc}
        -W & 0  & 0 & 0\\\hline
         0 & 0 &  -W_{1} & 0 \\
       \end{array}\right].$$
Thus 
{\scriptsize \begin{align*}
\mathbb{L}_{\left[\begin{array}{c}
               v \\
               w \\
    \end{array}\right]}(\lambda)+&\ker (\mathcal{F}) \\= 
&\lambda
\bordermatrix{ & n  & (m-1)n \cr
                   &  v\otimes A_{m} & W  & 0 & 0 \cr
                   & 0 & 0 & w\otimes D_{k} & W_{1} \cr}\\
&+\bordermatrix{ &  (m-1)n & n \cr
          &  v \otimes [A_{m-1} \cdots A_{1} ] -W  &  v \otimes A_{0} & 0 & -ve_{k}^{T}\otimes B \cr
         & 0 & we_{m}^{T}\otimes C  & w \otimes [D_{k-1} \cdots D_{1}] -W_{1} & w \otimes D_0 \cr
                   }
\end{align*}}

\end{proof}

\begin{corollary}
We have $\dim \mathbb{L}_{1}(G) =  \dim L_{1}(A) + \dim L_{1}(D).$  
\end{corollary}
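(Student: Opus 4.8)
The plan is to compute $\dim\mathbb{L}_1(G)$ by counting the free parameters in the explicit block description of $\mathbb{L}_1(G)$ furnished by Theorem~\ref{chtL1G}, and then to recognise the outcome as $\dim\mathcal{L}_1(A)+\dim\mathcal{L}_1(D)$ using the dimension formula for $\mathcal{L}_1$ already recalled in the proof of the corollary computing $\dim\mathbb{L}_1(\mathcal{S})$ (equivalently, the classical count of \cite{mmmm06}).

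First I would observe that Theorem~\ref{chtL1G} sets up a bijection between $\mathbb{L}_1(G)$ and the set of quadruples $(v,w,W,W_1)$ with $v\in\mathbb{C}^{m}$, $w\in\mathbb{C}^{k}$, $W\in\mathbb{C}^{nm\times n(m-1)}$ and $W_1\in\mathbb{C}^{rk\times r(k-1)}$: from such a quadruple the pencil $\lambda\mathbb{X}+\mathbb{Y}$ is written down via the displayed forms, while from a given pencil in $\mathbb{L}_1(G)$ one recovers $v$ from the leading block $v\otimes A_m$ of $\mathbb{X}$ (possible because $\deg A=m$ forces $A_m\neq0$), then $W$ from the next block of $\mathbb{X}$, and similarly $w$ from $w\otimes D_k$ (using $\deg D=k$, so $D_k\neq0$) and $W_1$ from the last block of $\mathbb{X}$. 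This correspondence is manifestly linear, hence a linear isomorphism, so
\[
\dim\mathbb{L}_1(G)=m+k+nm\cdot n(m-1)+rk\cdot r(k-1)=m+k+m(m-1)n^{2}+k(k-1)r^{2}.
\]
The same count can be phrased through rank--nullity for the linear surjection $\mathcal{F}:\mathbb{L}_1(G)\to\Gamma_G$ built in the proof of Theorem~\ref{chtL1G}, whose image is $(m+k)$-dimensional (parametrised by the right ansatz vector) and whose kernel, consisting exactly of the pencils carrying only the free blocks $W$ and $W_1$, has dimension $m(m-1)n^{2}+k(k-1)r^{2}$.

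It then only remains to regroup: $\dim\mathbb{L}_1(G)=\bigl(m+m(m-1)n^{2}\bigr)+\bigl(k+k(k-1)r^{2}\bigr)$, and by the dimension formula for $\mathcal{L}_1$ applied to the $n\times n$ polynomial $A$ of degree $m$ and to the $r\times r$ polynomial $D$ of degree $k$ we have $\dim\mathcal{L}_1(A)=m+m(m-1)n^{2}$ and $\dim\mathcal{L}_1(D)=k+k(k-1)r^{2}$, which are exactly the two grouped terms; this is the assertion. I do not expect a genuine obstacle: the only point needing a line of justification is that the parametrisation $(v,w,W,W_1)\mapsto\lambda\mathbb{X}+\mathbb{Y}$ is one-to-one, which relies on $A$ and $D$ having full degrees $m$ and $k$ so that $v$ and $w$ can be read back off the blocks $v\otimes A_m$ and $w\otimes D_k$; beyond that the proof is bookkeeping plus the cited dimension count. (Shortcut: the second, ``equivalent'' description of $\mathbb{L}_1(G)$ coincides verbatim with the definition of $\mathbb{L}_1(\mathcal{S})$, so $\mathbb{L}_1(G)=\mathbb{L}_1(\mathcal{S})$ and the statement is the earlier corollary; one may also quote the isomorphism $\mathbb{L}_1(\mathcal{S})\cong\mathcal{L}_1(A)\times\mathcal{L}_1(D)$.)
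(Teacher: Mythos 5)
Your argument is correct and is essentially the paper's own: the analogous corollary for $\mathbb{L}_1(\mathcal{S})$ is proved by rank--nullity for $\mathcal{F}$, namely $\dim = \dim\Gamma + \dim\ker\mathcal{F} = (m+k) + m(m-1)n^{2} + k(k-1)r^{2}$, followed by the regrouping $\dim\mathcal{L}_1(A)=m+m(m-1)n^{2}$, $\dim\mathcal{L}_1(D)=k+k(k-1)r^{2}$ from \cite{mmmm06}, and the corollary for $G$ is intended to follow from Theorem~\ref{chtL1G} in exactly the same way. Your explicit count of the parameters $(v,w,W,W_1)$ is just that computation spelled out, so there is nothing genuinely different to compare.
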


%\begin{proof}
%We have 
%\begin{align*}
%\dim \mathbb{L}_{1}(G) & = \dim \ker \mathcal{F} + \dim \Gamma (G) \\
%& = m+k + (m-1)n . mn + (k-1)r. rk \\
%   & = m+ m (m-1)n^2 + k+ k (k-1)r^2 \\
 %  & =  \dim \mathcal{L}_{1}(A) + \dim \mathcal{L}_{1}(D).     
%\end{align*}
%\end{proof}

\begin{corollary} \label{CorL1}
If $\mathbb{L}(\lambda) = \lambda X +Y \in \mathbb{L}_{1}(G) $ with ansatz  vector 
$ \left[
       \begin{array}{c}
        v  \\
        w  \\
       \end{array}
     \right]=\left[
       \begin{array}{c}
        \alpha e_1  \\
         \beta e_1  \\
       \end{array}
     \right], $ then $$ X = \left[
       \begin{array}{cc|cc}
         \alpha A_m & X_{12} & 0 & 0 \\
         0 & -Z & 0 & 0 \\
         \hline
          0 & 0 & \beta D_k & S_{12} \\
           0 & 0 & 0 & -Z_1\\
       \end{array}
     \right], \,\,  Y = \left[
       \begin{array}{cc|cc}
        Y_{11} & \alpha A_0 & 0 & -ve_{k}^{T}\otimes B \\
         Z & 0 & 0 & 0 \\
         \hline
          0 & we_{m}^{T}\otimes C  & S_{11 } & \beta D_0 \\
           0 & 0 & Z_1 & 0 \\
       \end{array}
     \right],  $$ where $Z \in \mathbb{C}^{(m-1)n \times (m-1)n}, Z_1 \in \mathbb{C}^{(k-1)r \times (k-1)r}$. 
\end{corollary}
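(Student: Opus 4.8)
The plan is to derive Corollary~\ref{CorL1} directly from Theorem~\ref{chtL1G} by substituting the special ansatz vector $\left[\begin{smallmatrix} v \\ w \end{smallmatrix}\right] = \left[\begin{smallmatrix} \alpha e_1 \\ \beta e_1 \end{smallmatrix}\right]$, with $e_1$ the first standard basis vector of $\C^m$ in the top slot and of $\C^k$ in the bottom slot. The only fact needed beyond Theorem~\ref{chtL1G} is the elementary Kronecker identity: for any matrix $M$ and $e_1 \in \C^p$, one has $e_1 \otimes M = \left[\begin{smallmatrix} M \\ 0 \end{smallmatrix}\right]$, the block-column whose first block is $M$ and whose remaining $p-1$ blocks vanish.

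First I would copy out the general shape of $\mathbb{X}$ and $\mathbb{Y}$ from Theorem~\ref{chtL1G} and set $v = \alpha e_1 \in \C^m$, $w = \beta e_1 \in \C^k$. By the identity above, $v \otimes A_m = \left[\begin{smallmatrix}\alpha A_m \\ 0\end{smallmatrix}\right]$, $v \otimes [A_{m-1}\ \cdots\ A_1] = \left[\begin{smallmatrix}\alpha\,[A_{m-1}\ \cdots\ A_1]\\ 0\end{smallmatrix}\right]$, $v \otimes A_0 = \left[\begin{smallmatrix}\alpha A_0\\ 0\end{smallmatrix}\right]$, where each zero block has $(m-1)n$ rows, and symmetrically the $w$-terms built from $D_k,\ldots,D_0$ become block-columns whose only nonzero (top) block is scaled by $\beta$, the zero part having $(k-1)r$ rows. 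Since $v$ and $w$ are scalar multiples of $e_1$, the blocks $v e_k^T \otimes B$ and $w e_m^T \otimes C$ each have a single nonzero block (in position $(1,k)$ equal to $\alpha B$, and in position $(1,m)$ equal to $\beta C$, respectively); after the row partitions $n,(m-1)n$ and $r,(k-1)r$ this places them entirely inside the first block-row of the corresponding off-diagonal block.

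Next I would eliminate the two free parameters of Theorem~\ref{chtL1G} by partitioning them along their rows: write $W \in \C^{mn\times(m-1)n}$ as $W = \left[\begin{smallmatrix} X_{12}\\ -Z\end{smallmatrix}\right]$ with $X_{12}\in\C^{n\times(m-1)n}$ and $Z\in\C^{(m-1)n\times(m-1)n}$, and write $W_1 \in \C^{rk\times(k-1)r}$ as $W_1 = \left[\begin{smallmatrix} S_{12}\\ -Z_1\end{smallmatrix}\right]$ with $S_{12}\in\C^{r\times(k-1)r}$ and $Z_1\in\C^{(k-1)r\times(k-1)r}$. Substituting these into the formulas for $\mathbb{X}$ and $\mathbb{Y}$ and grouping the rows according to the blocking $n,\,(m-1)n,\,r,\,(k-1)r$ reproduces exactly the patterns displayed for $X$ and $Y$ in the corollary, provided one puts $Y_{11} := \alpha\,[A_{m-1}\ \cdots\ A_1] - X_{12}$ and $S_{11} := \beta\,[D_{k-1}\ \cdots\ D_1] - S_{12}$. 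Since $X_{12},Z,S_{12},Z_1$ range over all matrices of the indicated sizes, so do $Y_{11},Z,S_{11},Z_1$, which shows the description is not only necessary but also complete.

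I do not expect a genuinely hard step: all of the substance is already in Theorem~\ref{chtL1G}, and what remains is careful block bookkeeping. The one point that needs attention is tracking how the single block-column parameter $W$ (respectively $W_1$) of Theorem~\ref{chtL1G} splits between its first block-row $X_{12}$ (resp.\ $S_{12}$), which gets folded into $Y_{11}$ (resp.\ $S_{11}$), and its lower part $-Z$ (resp.\ $-Z_1$), which appears unchanged in both $\mathbb{X}$ and $\mathbb{Y}$; once that splitting is fixed consistently, every block in the statement is read off by inspection.
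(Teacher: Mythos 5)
Your proposal is correct and matches the paper's approach: the paper's proof is precisely the one-line remark that the corollary follows directly from Theorem~\ref{chtL1G}, and your write-up is the careful block-bookkeeping version of that specialization (setting $v=\alpha e_1$, $w=\beta e_1$ and splitting $W$, $W_1$ row-wise into $X_{12}$, $-Z$ and $S_{12}$, $-Z_1$). No gaps; the extra remark on completeness of the parametrization is harmless and consistent with the theorem.
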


\begin{proof}
Proof directly follows from the previous Theorem~\ref{chtL1G}.     
\end{proof}

\begin{observation}
Observe that for $\left[
       \begin{array}{c}
        v  \\
        w  \\
       \end{array}
     \right]=\left[
       \begin{array}{c}
         e_1  \\
          e_1  \\
       \end{array}
     \right],$  $Z = -I_{(m-1)n}$ and $Z_1 = -I_{(k-1)r}$,  $C_{1}(\lambda)$ follows the pattern of Corollary~\ref{CorL1}.  
\end{observation}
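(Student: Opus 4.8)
The plan is to exhibit $C_{1}(\lambda)$ in the $2\times 2$ block pencil form of Corollary~\ref{CorL1}, read off its right ansatz vector, and then match the remaining (free) blocks of the template against the explicit entries of $C_{1}(\lambda)$.

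First I would rewrite $C_{1}(\lambda)=\lambda\mathbb{X}+\mathbb{Y}$ using the natural partitioning into an $mn\times mn$ (the ``$A$-part'') and a $kr\times kr$ (the ``$D$-part'') diagonal block. From the definition of $C_{1}(\lambda)$, $\mathbb{X}$ is block diagonal with top-left block $\diag(A_{m},I_{(m-1)n})$ and bottom-right block $\diag(D_{k},I_{(k-1)r})$, while $\mathbb{Y}$ has top-left block the first companion matrix of $A(\lambda)$, bottom-right block the first companion matrix of $D(\lambda)$, top-right block equal to $-B$ placed in the last $r$ columns of its first $n$ rows (and zero otherwise), and bottom-left block equal to $C$ placed in the last $n$ columns of its first $r$ rows (and zero otherwise). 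Comparing these off-diagonal blocks with the blocks $-ve_{k}^{T}\otimes B$ and $we_{m}^{T}\otimes C$ appearing in Corollary~\ref{CorL1}, they agree precisely when $v=w=e_{1}$; equivalently, by the characterization in Theorem~\ref{chtL1G}, $C_{1}(\lambda)$ has right ansatz vector $[\,e_{1}^{T}\ \ e_{1}^{T}\,]^{T}$, i.e.\ $\alpha=\beta=1$ in the notation of Corollary~\ref{CorL1}.

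It then remains to identify the free parameters of the template with the diagonal blocks of $C_{1}(\lambda)$. Partitioning the $A$-part into block rows and columns of sizes $n$ and $(m-1)n$: the $\mathbb{X}$-block forces $X_{12}=0$ (the $(1,2)$ entry of $\diag(A_{m},I_{(m-1)n})$) and $-Z=I_{(m-1)n}$, so $Z=-I_{(m-1)n}$; the $\mathbb{Y}$-block, being the first companion matrix of $A(\lambda)$, has first block row $[\,A_{m-1}\ \cdots\ A_{1}\mid A_{0}\,]$ and lower block rows $[\,-I_{(m-1)n}\mid 0\,]$, which is consistent with $Z=-I_{(m-1)n}$ and fixes $Y_{11}=[\,A_{m-1}\ \cdots\ A_{1}\,]$ together with the zero block beneath $\alpha A_{0}$. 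The identical argument on the $D$-part (block sizes $r$ and $(k-1)r$) gives $S_{12}=0$, $Z_{1}=-I_{(k-1)r}$, $S_{11}=[\,D_{k-1}\ \cdots\ D_{1}\,]$, and the zero block beneath $\beta D_{0}$. Hence $C_{1}(\lambda)$ is exactly the member of the family in Corollary~\ref{CorL1} obtained by taking $\alpha=\beta=1$, $X_{12}=S_{12}=0$, $Z=-I_{(m-1)n}$ and $Z_{1}=-I_{(k-1)r}$, which is the claim.

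This is essentially a bookkeeping verification, so I do not expect a genuine obstacle; the only care needed is to handle the two block partitions separately and to check that the value of $Z$ (resp.\ $Z_{1}$) forced by $\mathbb{X}$ coincides with the one forced by $\mathbb{Y}$ — a coincidence which holds precisely because the subdiagonal blocks of the first companion matrices of $A(\lambda)$ and $D(\lambda)$ are $-I_{(m-1)n}$ and $-I_{(k-1)r}$.
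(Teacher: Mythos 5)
Your verification is correct and is exactly the bookkeeping argument the paper leaves implicit (the Observation is stated without proof, and Corollary~\ref{CorL1} itself is cited as following directly from Theorem~\ref{chtL1G}): reading off $v=w=e_1$ from the off-diagonal blocks $-ve_k^T\otimes B$ and $we_m^T\otimes C$, and matching $X_{12}=S_{12}=0$, $Z=-I_{(m-1)n}$, $Z_1=-I_{(k-1)r}$ against the companion blocks of $A(\lambda)$ and $D(\lambda)$. Your closing remark that the value of $Z$ forced by $\mathbb{X}$ agrees with the one forced by the subdiagonal of $\mathbb{Y}$ is the one point worth checking, and you check it correctly.
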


\begin{theorem}\label{evrL1}
Let $G(\lambda)$ be an $r \times r$ matrix rational function, $\lambda$ is not a pole of $G(\lambda)$, and $L(\lambda)$ be any pencil in $\mathbb{L}_{1}(G)$ with non zero right ansatz vector $\left[
          \begin{array}{c}
            0 \\
            w \\
          \end{array}
        \right].
$ Then $x \in \mathbb{C}^{r}$ is an eigenvector for $G(\lambda)$ with finite eigenvalue $\lambda \in \mathbb{C}$ iff  $\left[
                  \begin{array}{c}
                    \Lambda_{m-1} \otimes A(\lambda)^{-1}B x \\
                    \Lambda_{k-1} \otimes x \\
                  \end{array}
                \right]$ is an eigenvector of $\mathbb{L}(\lambda)$ with the eigenvalue $\lambda$. If $G$ is regular, and $\mathbb{L} \in \mathbb{L}_{1}(G)$ is a linearization for $G$, then every eigenvector of $\mathbb{L}$ with
finite eigenvalue $\lambda$ is of the form
                $\left[
                  \begin{array}{c}
                    \Lambda_{m-1} \otimes A(\lambda)^{-1}B x \\
                    \Lambda_{k-1} \otimes x \\
                  \end{array}
                \right]$ for some eigenvector $x$ of $G(\lambda)$.
\end{theorem}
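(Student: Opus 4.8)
The plan is to treat the two assertions separately and to deduce the second (``every eigenvector of $\mathbb{L}$ is of this form'') from the first by an elementary dimension count.

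\textbf{The equivalence.} For $x\in\mathbb{C}^{r}$ put
$$z(x):=\left[\begin{array}{c}\Lambda_{m-1}\otimes A(\lambda)^{-1}Bx\\ \Lambda_{k-1}\otimes x\end{array}\right].$$
Since $\mathbb{L}(\lambda)\in\mathbb{L}_{1}(G)$ with right ansatz vector $\left[\begin{array}{c}0\\ w\end{array}\right]$, the theorem proved above gives
$$\mathbb{L}(\lambda)\left[\begin{array}{c}\Lambda_{m-1}\otimes A(\lambda)^{-1}B\\ \Lambda_{k-1}\otimes I_{r}\end{array}\right]=\left[\begin{array}{c}0\\ w\otimes G(\lambda)\end{array}\right],$$
and right-multiplying this identity by $x$ yields $\mathbb{L}(\lambda)\,z(x)=\left[\begin{array}{c}0\\ w\otimes G(\lambda)x\end{array}\right]$. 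The bottom block of $\Lambda_{k-1}\otimes x$ (the one coming from the constant entry of $\Lambda_{k-1}$) is $x$, so $z(x)=0$ precisely when $x=0$; and since $w\neq 0$, $w\otimes G(\lambda)x=0$ precisely when $G(\lambda)x=0$. Hence $x\neq 0$ and $G(\lambda)x=0$ (i.e. $x$ is an eigenvector of $G$ for the finite, non-pole eigenvalue $\lambda$) if and only if $z(x)$ is a nonzero element of $\ker\mathbb{L}(\lambda)$, i.e. an eigenvector of $\mathbb{L}(\lambda)$ for $\lambda$. This is the asserted equivalence; note it uses only membership of $\mathbb{L}$ in $\mathbb{L}_{1}(G)$, the hypothesis $w\neq 0$, and the fact that $\lambda$ is not a pole of $G$ (and, as is implicit in the statement, not an eigenvalue of $A(\lambda)$), so that $G(\lambda)$ and $A(\lambda)^{-1}$ are defined.

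\textbf{Completeness of the recovered eigenvectors.} Assume in addition that $G$ is regular and that $\mathbb{L}\in\mathbb{L}_{1}(G)$ is a linearization of $G$, and let $\lambda$ be a finite eigenvalue of $G$ that is not a pole. The assignment $x\mapsto z(x)$ is linear, it maps $\ker G(\lambda)$ into $\ker\mathbb{L}(\lambda)$ by the first part, and it is injective because the last block of $z(x)$ equals $x$. On the other hand, because $\mathbb{L}(\lambda)$ is a linearization of $G(\lambda)$ and $\lambda$ is a finite point that is not a pole of $G$, the defining equivalence of the linearization, evaluated at $\lambda$, forces $\dim\ker\mathbb{L}(\lambda)=\dim\ker G(\lambda)$. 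An injective linear map between finite-dimensional spaces of equal dimension is bijective, so every $z\in\ker\mathbb{L}(\lambda)$ equals $z(x)$ for a unique $x\in\ker G(\lambda)$; this is exactly the second statement of the theorem.

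\textbf{Main obstacle.} The only step that is not immediate from the preceding theorem and the block structure of $\mathbb{L}_{1}(G)$ is the identity $\dim\ker\mathbb{L}(\lambda)=\dim\ker G(\lambda)$: it must be read off from the notion of linearization of a rational matrix being used, and it is crucial here that $\lambda$ be neither a pole of $G$ nor a pole of the underlying realization, so that pole--zero cancellations in $G=CA(\lambda)^{-1}B+D(\lambda)$ corrupt neither the null-space count nor the recovery formula $z(x)$. Granting this, the theorem is a one-line consequence of the rank argument above.
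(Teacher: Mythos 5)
Your proof of the equivalence is essentially the paper's: both start from the ansatz identity $\mathbb{L}(\lambda)\bigl[\begin{smallmatrix}\Lambda_{m-1}\otimes A(\lambda)^{-1}B\\ \Lambda_{k-1}\otimes I_r\end{smallmatrix}\bigr]=\bigl[\begin{smallmatrix}0\\ w\otimes G(\lambda)\end{smallmatrix}\bigr]$, multiply by $x$, and read off that $\mathbb{L}(\lambda)z(x)=0$ iff $G(\lambda)x=0$; you are in fact slightly more careful than the paper in making explicit that $w\neq 0$ is what lets you cancel $w\otimes(\cdot)$ and that $z(x)=0$ iff $x=0$. Where you genuinely diverge is the completeness statement. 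The paper defines the same map $T:\mathrm{Null}(G(\lambda))\to\mathrm{Null}(\mathbb{L}(\lambda))$, proves injectivity from the last block of $z(x)$, and then simply asserts ``one can also show that $T$ is onto'' without an argument. You close that gap with a dimension count: injectivity plus $\dim\ker\mathbb{L}(\lambda)=\dim\ker G(\lambda)$ forces bijectivity. That is the right way to finish, with one caveat worth flagging: the linearization equivalence used in this paper (Rosenbrock linearization of $\mathcal{S}(\lambda)$ via unimodular $U,V,\widetilde U,\widetilde V$) directly yields only $\dim\ker\mathbb{L}(\lambda)=\dim\ker\mathcal{S}(\lambda)$; to get from there to $\dim\ker G(\lambda)$ you must invoke the isomorphism $f:\mathcal{N}_r(G(\lambda_0))\to\mathcal{N}_r(\mathcal{S}(\lambda_0))$, $x\mapsto\bigl[\begin{smallmatrix}A(\lambda_0)^{-1}Bx\\ x\end{smallmatrix}\bigr]$, proved earlier in Section~3, which requires $A(\lambda_0)$ invertible — exactly the condition you note in your closing remark. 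With that reference supplied, your argument is complete and in fact more complete than the paper's.
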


\begin{proof}
We have 
\begin{align*}
 \mathbb{L}(\lambda).\left[
                  \begin{array}{c}
                    \Lambda_{m-1} \otimes A(\lambda)^{-1}B x \\
                    \Lambda_{k-1} \otimes x \\
                  \end{array}
                \right] & = \mathbb{L}(\lambda) . \left[
                                       \begin{array}{c}
                                        (\Lambda_{m-1} \otimes A(\lambda)^{-1}B )(1 \otimes x)  \\
                                         ( \Lambda_{k-1} \otimes I_r) (1 \otimes x) \\
                                       \end{array}
                                     \right] \\
                & = \mathbb{L}(\lambda)\left[
                           \begin{array}{c}
                            (\Lambda_{m-1} \otimes A(\lambda)^{-1}B ) \\
                             \Lambda_{k-1} \otimes I_r \\
                           \end{array}
                         \right](1 \otimes x) \\
                                        & = \left(\left[
                                                   \begin{array}{c}
                                                     0 \\
                                                     \hline
                                                    w \otimes G(\lambda) \\
                                                   \end{array}
                                                 \right]
                         \right)(1 \otimes x)
\end{align*}
\begin{equation}
\Rightarrow \mathbb{L}(\lambda). \left[
                  \begin{array}{c}
                    \Lambda_{m-1} \otimes A(\lambda)^{-1}B x \\
                    \Lambda_{k-1} \otimes x \\
                  \end{array}
                \right] = \left[
                          \begin{array}{c}
                           0 \\
                           \hline
                             w \otimes G(\lambda)x \\
                          \end{array}
                        \right].
\end{equation}
Let $\left[
                  \begin{array}{c}
                    \Lambda_{m-1} \otimes A(\lambda)^{-1}B x \\
                    \Lambda_{k-1} \otimes x \\
                  \end{array}
                \right] $ is an eigenvector of $\mathbb{L}(\lambda)$ $\Rightarrow \mathbb{L}(\lambda). \left[
                  \begin{array}{c}
                    \Lambda_{m-1} \otimes A(\lambda)^{-1}B x \\
                    \Lambda_{k-1} \otimes x \\
                  \end{array}
                \right] = 0$ for some $w \in \C^k$. Thus, we have 
                $\left[
                 \begin{array}{c}
                           0 \\
                           \hline
                             w \otimes G(\lambda)x \\
                          \end{array}
                        \right] = 0  \Rightarrow G(\lambda)x = 0$. 
                        So, $x$ is an eigenvector of $G(\lambda)$.

Conversely,
let $x\in \mathbb{C}^{r}$ is an eigenvector of $G(\lambda)$ with finite eigenvalue $\lambda \in \mathbb{C}$.
So $G(\lambda)x = 0$ and we have  $\mathbb{L}(\lambda) \left[
                  \begin{array}{c}
                    \Lambda_{m-1} \otimes A(\lambda)^{-1}B x \\
                    \Lambda_{k-1} \otimes x \\
                  \end{array}
                \right]  = 0
$.
So, $\left[
                  \begin{array}{c}
                    \Lambda_{m-1} \otimes A(\lambda)^{-1}B x \\
                    \Lambda_{k-1} \otimes x \\
                  \end{array}
                \right] $ is an eigenvector for $\mathbb{L}(\lambda)$ corresponding an eigenvalue $\lambda$. 
Let $z := \left[
                  \begin{array}{c}
                    \Lambda_{m-1} \otimes A(\lambda)^{-1}B x \\
                    \Lambda_{k-1} \otimes x \\
                  \end{array}
                \right] $. For showing the only eigenvector of $\mathbb{L}(\lambda)$ is of the form $z$, we have to 
show that $T : \textrm{Null}(G(\lambda)) \rightarrow \textrm{Null}(\mathbb{L}(\lambda))$ defined by
$$Tx = \left[
                  \begin{array}{c}
                    \Lambda_{m-1} \otimes A(\lambda)^{-1}B x \\
                    \Lambda_{k-1} \otimes x \\
                  \end{array}
                \right] $$ is an linear isomorphism. Clearly, $T$ is linear. Let $Tx = 0$. Then we have
$$ \left[
                  \begin{array}{c}
                    \Lambda_{m-1} \otimes A(\lambda)^{-1}B x \\
                    \Lambda_{k-1} \otimes x \\
                  \end{array}
                \right]  = 0 \Rightarrow \Lambda_{k-1} \otimes x = 0 \Rightarrow x = 0.$$
So $T$ is one-one. One can also show that $T$ is onto. Hence  proved. 
\end{proof}

Now, consider $G(\lam)$ given in (\ref{tfunction01}). The second companion form $C_2(\lambda)$ of $G(\lambda)$ or $\mathcal{S}(\lambda)$ \cite{beh22} is given by

\begin{eqnarray}
  \mathcal{C}_2(\lam) &=& \lam 
         \left[\begin{array}{cccc|cccc}
                           A_{m} &  &  &  & & & & \\
                            & I_{n} &  &  & & & & \\
                            &  & \ddots &  & & & & \\
                            &  &  & I_{n} & & & & \\\hline
                            &  &  &  & D_{k} & & & \\
                            &  &  &  &  & I_r & & \\
                            &  &  &  &  & & \ddots & \\
                            &  &  &  &  & &  & I_r \\
         \end{array}\right]\nonumber\\
                       && \quad+ \left[
         \begin{array}{cccc|cccc}
            A_{m-1} & -I_{n} &  &  & 0 & & & \\
            A_{m-2} & 0 & \ddots &  & \vdots & & & \\
            \vdots & \ddots &  & -I_{n} & 0 & & \ddots & \\
            A_{0} & \cdots & 0 & 0  & -B &  0 & \cdots & 0 \\\hline
            0  &  &  &   &  D_{k} & -I_r & \cdots & 0 \\
            \vdots  &  \ddots  &  &   &  D_{k-1} & & & \vdots\\
            0  &  &   \ddots &   &  \vdots & & & -I_r\\
            C  & 0 & \cdots &  0 &  D_0 & 0 & \cdots & 0\\
         \end{array}\right].\label{C2}
\end{eqnarray} 
Then for $C_2(\lambda)$  we have 
$$ 
   \left[\begin{array}{c}
        \left(\Bar{\Lambda}_{m-1} \otimes (-C A(\lambda)^{-1})^{*}\right)y\\
        (\Bar{\Lambda}_{k-1} \otimes I_r) y
   \end{array}\right]^{*}
C_{2}(\lam) = 
    \left[\begin{array}{ccc|ccc}
     0 & \cdots & 0 & y^{*}G(\lambda) & 0 & 0
     \end{array}\right]^{T}$$
for all $y \in \C^r.$ It is equivalent to  the identity 
\begin{align*}
\left[\begin{array}{c}
      \Bar{\Lambda}_{m-1} \otimes (-C A(\lambda)^{-1})^{*} \\
      \Bar{\Lambda}_{k-1} \otimes I_r
    \end{array}\right]^{*} C_{2}(\lam)  
&= \left[\begin{array}{ccc|ccc}
    0 &\cdots &0 &G(\lambda) &\cdots & \\
  \end{array}\right] \\
&= \left[\begin{array}{c|c}
    0^{T}   & e_{1}^{T} \otimes G(\lambda) \\
  \end{array}\right].
\end{align*}  
If we generalize this to any arbitrary pencil then we consider the set of pencils $\mathbb{L}(\lambda) := \lambda \mathbb{X} + \mathbb{Y}$ satisfying the property
\begin{align*}
&\left[\begin{array}{c}
    \Bar{\Lambda}_{m-1} \otimes (-C A(\lambda)^{-1})^{*}\\
    \Bar{\Lambda}_{k-1} \otimes I_r
\end{array}\right]^{*} \mathbb{L}(\lambda) \\
&= \left[\left(\begin{array}{cc}
      \left(\Bar{\Lambda}_{m-1} \otimes  (-CA(\lambda)^{-1})^*\right)^{*} &
     (\Bar{\Lambda}_{k-1} \otimes I_r)^{*}
  \end{array}\right)\right]\mathbb{L}(\lambda)\\ 
&=  \left[\begin{array}{ccc|ccc} 
     -\lambda^{m-1}  CA(\lambda)^{-1} &
     \cdots &  -CA(\lambda)^{-1} &
     \lambda^{k-1} I_r &\cdots &I_r \\
    \end{array}\right]\mathbb{L}(\lambda) \\
&= \left[\begin{array}{ccc|ccc}
     0 &\cdots &0 &w_1 G(\lambda) & \cdots &w_k G(\lambda)\\
    \end{array}\right]\\
&= \left[\begin{array}{c|c}
     0^{T} & w^{T} \otimes G(\lambda)
    \end{array}\right] 
\end{align*}
for some vector $\left[\begin{array}{c}
                         0\\\hline
                          w 
                \end{array}\right]\in \C^{m+k} $ 
such that $w \in \C^k $. This set of pencils will be denoted by $\mathbb{L}_{2}(G)$ as a reminder that it generalizes the second companion form of $G$.

Define $\mathbb{W}_{G} :=\left \{ \left[
                         \begin{array}{c|c}
                         0 &
                          w^{T} \otimes G(\lambda)
                         \end{array}
              \right]:  \left[
                         \begin{array}{c}
                         0\\
                         \hline
                          w 
                         \end{array}
              \right]\in \C^{m+k} \right\}$

\begin{definition}
 $\mathbb{L}_{2}(G) := \biggl\{\mathbb{L}(\lambda) = \lambda \mathbb{X} + \mathbb{Y} 
 = \lambda \left[\begin{array}{c|c}
                X_{11} & 0 \\\hline
                0 & X_{22} \\
            \end{array}\right] 
+ \left[\begin{array}{c|c}
        Y_{11} & Y_{12} \\\hline
        Y_{21} & Y_{22} \\
  \end{array}\right]  
: \mathbb{X}, \mathbb{Y} \in \mathbb{C}^{(mn +kr)\times (mn +kr)}, \quad \left[\begin{array}{c}
        \Bar{\Lambda}_{m-1} \otimes (-C A(\lambda)^{-1})^{*} \\
        \Bar{\Lambda}_{k-1} \otimes I_r \\
      \end{array}\right]^{*}
\mathbb{L}(\lambda) \in \mathbb{W}_{G}\biggr\}. $   
\end{definition}

\begin{definition}
Consider the classes of pencils

\begin{align*}
\mathbb{L}_{2}(G) := \biggl\{\mathbb{L}(\lambda) &= \lambda \mathbb{X}+\mathbb{Y} = \lambda 
    \left[\begin{array}{c|c}
          X_{11} & 0 \\\hline
          0 & X_{22} \\
    \end{array}\right] 
 +  \left[\begin{array}{c|c}
         Y_{11} & - e_{m}z^{T}\otimes B \\\hline
         e_{k}s^{T}\otimes C & Y_{22} \\
    \end{array}\right] : \\
 &X_{11} \widehat{\boxplus} Y_{11} 
 =  s^{T}\otimes\left[\begin{array}{c}
                     A_{m} \\
                     \vdots \\
                     A_{0} \\
                \end{array}\right],
 \qquad
 X_{22} \widehat{\boxplus} Y_{22} 
 = z^{T}\otimes \left[\begin{array}{c}
                    D_{k} \\
                    \vdots \\
                    D_{0} \\
                \end{array}\right]
\biggr\}.
\end{align*}
Equivalently, 

{\small
$$\mathbb{L}_{2}(G) 
:= \left\{\mathbb{L}(\lambda) 
 =   \left[\begin{array}{c|c}
        L(\lambda) & - e_{m}z^{T}\otimes B \\\hline
        e_{k}s^{T}\otimes C & K(\lambda) \\
     \end{array}\right] 
: (L(\lambda), s) \in \mathcal{L}_{2}(A), \quad (K(\lambda), z) \in \mathcal{L}_{2} (D)\right\}. $$}
Then 
$$\left[\begin{array}{cc}
          \Lambda_{m-1}^{T}\otimes -C A(\lambda)^{-1} &  \Lambda_{k-1}^{T} \otimes I_r  \\
       \end{array}\right]\mathbb{L}(\lambda) 
    = \left[\begin{array}{c|c}
            0 &  z^{T}\otimes G(\lambda)  \\
      \end{array}\right].
$$   
\end{definition}

\begin{exam}
Consider $G(\lambda) = C(A_0 - \lambda A_1)^{-1} B + D_2 \lambda^{2} + D_1 \lambda + D_0 $ and 
let $$X = \left[\begin{array}{c|cc}
                             -A_1 & 0 & 0 \\\hline
                         0  & D_{2} & 0  \\
                           0  &  0 & -D_0  \\
                        \end{array}\right] 
                         \quad\mbox{and}\quad 
                         Y = \left[\begin{array}{c|cc}
                                 A_0 & - B & 0  \\\hline
                                0 & D_{1}& D_{0}  \\
                                C & D_{0} & 0  \\
                             \end{array}\right].
$$
Then $$\left[
         \begin{array}{c|cc}
         -C(A_0-\lambda A_1)^{-1} & \lambda I_r & I_r  \\
         \end{array}
       \right](\lambda X+Y) = \left[
                                \begin{array}{c|cc}
                                 0 &  G(\lambda) & 0 \\
                                \end{array}
                              \right] = \left[
                                          \begin{array}{c|c}
                                            0 & e_{1}^{T}\otimes G(\lambda)  \\
                                          \end{array}
                                        \right]
$$
\end{exam}

Consider $G(\lambda) = D(\lambda) +CA(\lambda)^{-1} B.$ Then the transpose of  $G(\lambda)$ is given by $G^{T}(\lambda) = D^{T}(\lambda) +B^{T}(A(\lambda)^{-1})^{T} C^{T},$ where $D^{T}(\lambda) =  \sum_{i=0}^{k} D_{i}^{T} \lambda^{i}$ and $A^{T}(\lambda) =  \sum_{i=0}^{k} A_{i}^{T} \lambda^{i}$

\begin{lemma}\label{L1L2r}
 We have $\mathbb{L}_{2} (G) = (\mathbb{L}_{1}(G^{T}))^{T}.$  That is, $ \mathbb{L}(\lambda) \in \mathbb{L}_{1}(G^{T}) \iff \mathbb{L}^{T} \in\mathbb{L}_{2}(G).$  
\end{lemma}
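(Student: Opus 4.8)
The plan is to establish the identity $\mathbb{L}_2(G) = (\mathbb{L}_1(G^T))^T$ by tracking how transposition interacts with the defining ansatz equations. First I would record the relationship between the companion-type building blocks: if $G(\lambda) = D(\lambda) + CA(\lambda)^{-1}B$ then $G^T(\lambda) = D^T(\lambda) + B^T(A^T(\lambda))^{-1}C^T$, so the system matrix data for $G^T$ is $(A^T, C^T, B^T, D^T)$, i.e.\ the roles of $B$ and $C$ are swapped and each coefficient matrix is transposed. Correspondingly, the classical fact from \cite{mmmm06} that $\mathcal{L}_2(A) = (\mathcal{L}_1(A^T))^T$ (and likewise for $D$) will be the engine: $(L(\lambda), v) \in \mathcal{L}_1(A^T) \iff (L(\lambda)^T, v) \in \mathcal{L}_2(A)$.

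Next I would take a generic pencil $\mathbb{L}(\lambda) \in \mathbb{L}_1(G^T)$. By the block description of $\mathbb{L}_1$ (the second, equivalent form in the definition, applied to $G^T$ with data $(A^T, C^T, B^T, D^T)$), it has the form
$$\mathbb{L}(\lambda) = \left[\begin{array}{c|c} L(\lambda) & -v e_k^T \otimes B^T \\ \hline w e_m^T \otimes C^T & K(\lambda) \end{array}\right],$$
with $(L(\lambda), v) \in \mathcal{L}_1(A^T)$ and $(K(\lambda), w) \in \mathcal{L}_1(D^T)$. Transposing block by block, and using $(v e_k^T \otimes B^T)^T = e_k v^T \otimes B$ and $(w e_m^T \otimes C^T)^T = e_m w^T \otimes C$, I get
$$\mathbb{L}(\lambda)^T = \left[\begin{array}{c|c} L(\lambda)^T & -e_m w^T \otimes C \\ \hline e_k v^T \otimes B & K(\lambda)^T \end{array}\right].$$
Wait — I should be careful with which off-diagonal block carries $B$ and which carries $C$: after transposition the upper-right block involves $C$ and the lower-left involves $B$, which does not match the shape in the definition of $\mathbb{L}_2(G)$. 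I would resolve this by noting that the block structure for $G^T$ genuinely has $B^T$ in the upper-right and $C^T$ in the lower-left (since for $G^T$ the "input" matrix is $C^T$ and the "output" matrix is $B^T$), so transposing puts $C$ upper-right and $B$ lower-left, precisely the shape $\left[\begin{smallmatrix} \cdot & -e_m z^T\otimes B \\ e_k s^T \otimes C & \cdot \end{smallmatrix}\right]$ after relabeling $z := v$, $s := w$. Combined with $(L(\lambda)^T, v) \in \mathcal{L}_2(A)$ and $(K(\lambda)^T, w) \in \mathcal{L}_2(D)$ from the $\mathcal{L}_1/\mathcal{L}_2$ transpose correspondence, this shows $\mathbb{L}(\lambda)^T \in \mathbb{L}_2(G)$, giving the inclusion $(\mathbb{L}_1(G^T))^T \subseteq \mathbb{L}_2(G)$.

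For the reverse inclusion I would run the same computation backwards: any $\mathbb{L}(\lambda) \in \mathbb{L}_2(G)$ in the block form of its definition transposes into a pencil of the $\mathbb{L}_1(G^T)$ block form, again using $\mathcal{L}_2(A) = (\mathcal{L}_1(A^T))^T$ and $\mathcal{L}_2(D) = (\mathcal{L}_1(D^T))^T$. Since transposition is an involution, the two inclusions together yield $\mathbb{L}_2(G) = (\mathbb{L}_1(G^T))^T$, and the "iff" restatement is immediate. Alternatively, and perhaps more cleanly, I would argue directly at the level of the ansatz equations: transposing the right-ansatz identity $\mathbb{L}(\lambda)\left[\begin{smallmatrix} \Lambda_{m-1}\otimes A(\lambda)^{-1}B \\ \Lambda_{k-1}\otimes I_r\end{smallmatrix}\right] = \left[\begin{smallmatrix} 0 \\ w\otimes G(\lambda)\end{smallmatrix}\right]$ that defines $\mathbb{L}_1$ turns it into a left-ansatz identity for $G^T$, and the definition of $\mathbb{L}_2(G)$ is literally the left-ansatz version; the conjugation/transpose bookkeeping of $(-CA(\lambda)^{-1})^*$ versus $(-B^T(A^T)^{-1})$ is the only place care is needed. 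The main obstacle, and the only real subtlety, is keeping the $B \leftrightarrow C$ swap and the transposes of all coefficient matrices consistent throughout — everything else is a routine application of the already-established block characterizations (Theorem~\ref{chtL1G}) and the classical $\mathcal{L}_1/\mathcal{L}_2$ duality.
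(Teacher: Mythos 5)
Your closing ``alternative'' argument --- transpose the defining right-ansatz identity of $\mathbb{L}_1(G^T)$, namely $\mathbb{L}(\lambda)\bigl[\begin{smallmatrix}\Lambda_{m-1}\otimes (A(\lambda)^{-1})^{T}C^{T}\\ \Lambda_{k-1}\otimes I_r\end{smallmatrix}\bigr] = \bigl[\begin{smallmatrix}0\\ w\otimes G(\lambda)^{T}\end{smallmatrix}\bigr]$, and read the transposed equation as the left-ansatz identity defining $\mathbb{L}_2(G)$ --- is precisely the paper's proof, and it is the clean way to do this. Note that the ``$\Lambda\otimes A^{-1}B$'' column for $G^T$ must use $(A^T(\lambda))^{-1}C^T$, because in $G^T=D^T+B^T(A^T)^{-1}C^T$ the roles of $B$ and $C$ are swapped.

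Your primary block-form route, however, is muddled at exactly the point you flagged. Applying the block description of $\mathbb{L}_1(\cdot)$ to $G^T$, the upper-right block must carry the \emph{input} matrix of $G^T$, which is $C^T$ (not $B^T$), and the lower-left block its output matrix $B^T$; the correct starting form is $\left[\begin{smallmatrix} L(\lambda) & -v e_k^T\otimes C^T\\ w e_m^T\otimes B^T & K(\lambda)\end{smallmatrix}\right]$. Transposing \emph{this} puts $B$ in the upper-right and $C$ in the lower-left, which is the $\mathbb{L}_2(G)$ shape. Your written ``resolution'' instead asserts that the upper-right of $\mathbb{L}_1(G^T)$ carries $B^T$ while simultaneously saying the input matrix of $G^T$ is $C^T$, and then claims the transpose has ``$C$ upper-right and $B$ lower-left, precisely the shape $\left[\begin{smallmatrix}\cdot & -e_m z^T\otimes B\\ e_k s^T\otimes C&\cdot\end{smallmatrix}\right]$'' --- a shape that in fact has $B$ upper-right. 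As written the step is internally inconsistent, even though the correct bookkeeping does make the route work. (There is also a residual sign discrepancy, $-v e_k^T\otimes(\cdot)$ before transposition versus $+e_m w^T\otimes(\cdot)$ after, which traces back to the asymmetric minus sign on $B$ in $\mathcal{S}(\lambda)$; the paper's own conventions are loose here, so I would not press it, but if you keep the block-form route you should fix the signs of the ansatz vectors explicitly.)
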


{\small
\begin{proof}
Note that 
\begin{align*}
  \mathbb{L}(\lambda) \in \mathbb{L}_{1}(G^{T})
&  \iff \mathbb{L}(\lambda)
   \left[\begin{array}{c}
   \left(\Lambda_{m-1} \otimes (A(\lambda)^{-1})^{T}C^{T}\right) \\
   \Lambda_{k-1} \otimes I_r
   \end{array}\right] = 
   \left[\begin{array}{c}
   0 \\ \hline
   w \otimes G(\lambda)^{T} \\
   \end{array}\right] \\
&  \iff
   \left[\begin{array}{c}
      \Lambda_{m-1} \otimes (A(\lambda)^{-1})^{T}C^{T} \\
      \Lambda_{k-1} \otimes I_r
      \end{array}\right]^{T}  
     \mathbb{L}(\lambda)^{T}= 
    \left[\begin{array}{c}
      0 \\ \hline
      w \otimes G(\lambda)^{T} \\
    \end{array}\right]^{T}    \\
& \iff 
     \left[\begin{array}{cc}
         \Lambda_{m-1}^{T} \otimes CA(\lambda)^{-1} &  \Lambda_{k-1}^{T} \otimes I_r\\
      \end{array}\right] \mathbb{L}(\lambda)^{T} 
  = \left[\begin{array}{cc}
         0^{T} &  w^{T} \otimes G(\lambda)  \\
     \end{array}\right] \\
& \iff \mathbb{L}^{T} \in\mathbb{L}_{2}(G). 
\end{align*}    
\end{proof}}

\begin{definition}
A left eigenvector of an $n \times n$ matrix
polynomial $G$ associated with a finite eigenvalue $\lambda$ is a nonzero vector $y \in \mathbb{C}^{r}$ such
that $y^{*}G(\lambda) = 0.$    
\end{definition}

Now, we have the following theorem. 

\begin{theorem}
Let $G(\lambda)$ be an $n \times n$ matrix rational function, $\lambda$ is not a pole of $G(\lambda)$, and $\mathbb{L}(\lambda)$ be any pencil in $\mathbb{L}_{2}(G)$ with non zero left ansatz vector $\left[\begin{array}{cc}0 &  z^{T}\end{array}\right]$. Then $y \in \mathbb{C}^{r}$ is a left eigenvector for $G(\lambda)$ with finite eigenvalue $\lambda \in \mathbb{C}$ iff
$\left[\begin{array}{cc}
     \bar{\Lambda}_{m-1}\otimes (-C A(\lambda)^{-1})^{*} y   & \\
     \bar{\Lambda}_{k-1}\otimes y \\
   \end{array}\right]$
is a left eigenvector for $\mathbb{L}(\lambda)$ with eigenvalue $\lambda$. If in addition, $G$ is regular and $\mathbb{L} \in \mathbb{L}_{2}(G)$ is a linearization for $G$, then every left eigenvector of $\mathbb{L}$ with finite eigenvalue $\lambda$ is of the form
$\left[
   \begin{array}{cc}
     \bar{\Lambda}_{m-1} \otimes (-C A(\lambda)^{-1})^{*} y   & \\
     \bar{\Lambda}_{k-1}\otimes y \\
   \end{array}
 \right]$
 for some left eigenvector $y$ of $G$.
\end{theorem}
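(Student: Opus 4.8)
I would prove this as the left/conjugate-transpose analogue of Theorem~\ref{evrL1}, deriving everything from one matrix identity. Set $u:=\left[\begin{array}{c}\Bar{\Lambda}_{m-1}\otimes(-CA(\lambda)^{-1})^{*}y\\ \Bar{\Lambda}_{k-1}\otimes y\end{array}\right]$, i.e.\ $u$ is the $\mathbb{L}_{2}(G)$-ansatz column $\left[\begin{array}{c}\Bar{\Lambda}_{m-1}\otimes(-CA(\lambda)^{-1})^{*}\\ \Bar{\Lambda}_{k-1}\otimes I_{r}\end{array}\right]$ applied to $y$. Taking conjugate transposes and using the mixed-product rule together with $(\Bar{\Lambda}_{j})^{*}=\Lambda_{j}^{T}$ and $((-CA(\lambda)^{-1})^{*})^{*}=-CA(\lambda)^{-1}$ gives $u^{*}=y^{*}\left[\begin{array}{cc}\Lambda_{m-1}^{T}\otimes(-CA(\lambda)^{-1}) & \Lambda_{k-1}^{T}\otimes I_{r}\end{array}\right]$. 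Since $\mathbb{L}(\lambda)\in\mathbb{L}_{2}(G)$ has left ansatz vector $\left[\begin{array}{c}0\\ z\end{array}\right]$, the defining identity of $\mathbb{L}_{2}(G)$ says $\left[\begin{array}{cc}\Lambda_{m-1}^{T}\otimes(-CA(\lambda)^{-1}) & \Lambda_{k-1}^{T}\otimes I_{r}\end{array}\right]\mathbb{L}(\lambda)=\left[\begin{array}{c|c}0 & z^{T}\otimes G(\lambda)\end{array}\right]$, and multiplying this on the left by $y^{*}$ (using $y^{*}(z^{T}\otimes G(\lambda))=z^{T}\otimes(y^{*}G(\lambda))$) yields the key identity
$$u^{*}\mathbb{L}(\lambda)=\left[\begin{array}{c|c}0 & z^{T}\otimes(y^{*}G(\lambda))\end{array}\right].$$

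From this the biconditional is immediate. If $y\neq0$ is a left eigenvector of $G$, i.e.\ $y^{*}G(\lambda)=0$, then $u^{*}\mathbb{L}(\lambda)=0$, and $u\neq0$ because the bottom block $\Bar{\Lambda}_{k-1}\otimes y$ has its last $r$ entries equal to $y$; hence $u$ is a left eigenvector of $\mathbb{L}(\lambda)$ with eigenvalue $\lambda$. Conversely, if $u$ is a left eigenvector of $\mathbb{L}(\lambda)$ then $u\neq0$ forces $y\neq0$ (if $y=0$ both blocks of $u$ vanish), and $u^{*}\mathbb{L}(\lambda)=0$ combined with the key identity gives $z^{T}\otimes(y^{*}G(\lambda))=0$; since the left ansatz vector $z$ is nonzero this forces $y^{*}G(\lambda)=0$, so $y$ is a left eigenvector of $G$.

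For the last assertion I would study the linear map $T:\mathcal{N}_{l}(G(\lambda))\to\mathcal{N}_{l}(\mathbb{L}(\lambda))$, $y\mapsto u$, where $\mathcal{N}_{l}(\cdot)$ denotes the left null space. By the computation above $T$ is well defined, it is clearly linear, and it is injective because $Ty=0$ forces $\Bar{\Lambda}_{k-1}\otimes y=0$, hence $y=0$. Because $\mathbb{L}(\lambda)$ is a linearization of $G(\lambda)$ and $\lambda$ is finite and not a pole of $G$, $\mathbb{L}(\lambda)$ and $\diag(I,G(\lambda))$ differ by matrices invertible at $\lambda$, so $\mathbb{L}(\lambda)$ and $G(\lambda)$ have the same left nullity; thus $\dim\mathcal{N}_{l}(G(\lambda))=\dim\mathcal{N}_{l}(\mathbb{L}(\lambda))$ and the injective linear map $T$ is onto, which gives that every left eigenvector of $\mathbb{L}(\lambda)$ is $Ty$ for some nonzero $y\in\mathcal{N}_{l}(G(\lambda))$, i.e.\ for some left eigenvector $y$ of $G$. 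One can alternatively phrase the whole argument as the transpose-conjugate dual of Theorem~\ref{evrL1} via Lemma~\ref{L1L2r}, using that $y^{*}G(\lambda)=0$ iff $G^{T}(\lambda)\bar{y}=0$. There is no deep obstacle here; the only places that need genuine care are the Kronecker-and-conjugation bookkeeping that converts $u^{*}$ into $y^{*}\left[\begin{array}{cc}\Lambda_{m-1}^{T}\otimes(-CA(\lambda)^{-1}) & \Lambda_{k-1}^{T}\otimes I_{r}\end{array}\right]$, and the invariance of the left nullity under linearization that is used for surjectivity in the last part.
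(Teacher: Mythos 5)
Your proposal is correct and follows essentially the same route as the paper, which simply declares the proof ``similar'' to that of Theorem~\ref{evrL1}: you derive the key identity $u^{*}\mathbb{L}(\lambda)=\left[\begin{array}{c|c}0 & z^{T}\otimes(y^{*}G(\lambda))\end{array}\right]$ as the conjugate-transpose dual of the right-eigenvector computation and read off both directions, exactly as the paper intends. In fact you supply slightly more than the paper does, since the surjectivity of the map $y\mapsto u$ (which the paper's model proof dismisses with ``one can also show that $T$ is onto'') is justified here by the equality of left nullities coming from the linearization property.
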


\begin{proof}
     Proof is similar as theorem~\ref{evrL1}. 
\end{proof}

\section{Double ansatz pencils for $G$}

\begin{definition}
$\mathbb{DL}(G) := \mathbb{L}_{1}(G)\cap \mathbb{L}_{2}(G)$ 
\end{definition}

From the definition of $\mathbb{L}_{1}(\G)$ and $\mathbb{L}_{2}(G)$, the pencil $\lambda \mathbb{X}+ \mathbb{Y} = \lambda \left[                                                              \begin{array}{c|c}                                                                 X_{11} & X_{12} \\\hline                          
                               X_{21} & X_{22} \\
                             \end{array}\right] +
                 \left[\begin{array}{c|c}
                      Y_{11} & Y_{12} \\\hline
                      Y_{21} & Y_{22}\\
                 \end{array}\right]$
 belonging to $\mathbb{DL}(G)$ should satisfy the following: $X_{12} = 0$, $X_{21} = 0$ and $Y_{21} =e_{k}s^{T} \otimes C$. But $Y_{21} = w e_{m}^{T}\otimes C$. So, $e_{k} s^{T} = w e_{m}^{T}$. Similarly, $ Y_{12} = -v e_{k}^{T} \otimes B$. But $Y_{12} = -e_m z^{T}\otimes B$ which implies that $v e_{k}^{T} = e_{m}z^{T}$. Thus we get $w  = \beta e_{k},  z= \alpha e_k  \mbox{  and  }  s  = \beta e_{m},  v =\alpha e_m$ for scalars $\alpha$ and $\beta.$
Now, define
\begin{align*}
&\mathbb{DL}(G) 
  = \biggl\{\lambda 
     \left[\begin{array}{c|c}
          X_{11} & 0 \\\hline
          0 & X_{22} \\
     \end{array}\right] + 
     \left[\begin{array}{c|c}
          Y_{11} & -v e_{k}^{T} \otimes B \\\hline
          we_{m}^{T} \otimes C & Y_{22} \\
     \end{array}\right] :  
  \lambda X_{11}+Y_{11} \in  \mathcal{DL}(A),  \\
& \hspace{2.5cm} \lambda X_{22}+Y_{22} \in  \mathcal{DL}(D)
      \mbox{  with  } w  = \beta e_{k},  z= \alpha e_k  \mbox{  and  }  s  = \beta e_{m},  v =\alpha e_m \biggr\},
\end{align*}
where $ \mathcal{DL}(A)$ is the subspace of all block-symmetric pencils in $\mathcal{L}_{1}(A)$ \cite{mmmm06}. 
OR, 
\begin{align*}
&\mathbb{DL}(G) 
  = \biggl\{ 
      \left[\begin{array}{c|c}
         L(\lambda) & -v e_{k}^{T} \otimes B  \\\hline
         we_{m}^{T} \otimes C & K(\lambda) \\
      \end{array}\right] :
    L(\lambda) \in  \mathcal{DL}(A),  \\ 
    & \hspace{2.5cm} K(\lambda) \in  \mathcal{DL}(D)
      \mbox{  with  } w  = \beta e_{k},  z= \alpha e_k  \mbox{  and  }  s  = \beta e_{m},  v =\alpha e_m\biggr\}.
\end{align*}

Let $ \mathcal{H} := \sum^k_{i=1}\sum^l_{j=1} E_{ij}\otimes H_{ij}$ be a  $k \times l$ block matrix with each block $H_{ij}$ being a $p\times q$ matrix, where $ E_{ij}$ is a $k\times l$ matrix whose $(i,j)$-th entry is equal to $1$ and the remaining entries are equal to $0.$ The block transpose of $\mathcal{H}$ denoted by  $\mathcal{H}^{\mathcal{B}} $ is a $l \times k$ block matrix given by
 $\mathcal{H}^{\mathcal{B}}:= \sum^k_{i=1}\sum^l_{j=1} E_{ij}^T\otimes H_{ij}.$ A block matrix $\mathcal{H}$ is said to be {\em block-symmetric} provided that $\mathcal{H}^{\mathcal{B}} = \mathcal{H},$ see~\cite{hmmt}.

The double ansatz space $ \mathcal{DL}(A) = \mathcal{L}_1(A) \cap \mathcal{L}_2(A)$ consists of block-symmetric pencils. More precisely,
if $L(\lambda)\in   \mathcal{DL}(A)$ with right ansatz vector $v$ and left ansatz vector $w$ then $v = w$ and $L(\lambda)$ is block-symmetric. In particular, $ \mathcal{DL}(A) $ contains symmetric (resp., Hermitian) linearizations of $A(\lam)$ when $A(\lam)$ is symmetric (resp., Hermitian) and regular,  see~\cite{hmmt, mmmm06}.

We  define the block-transpose of a system matrix as follows.

\begin{definition}
Let $\mathcal{A}:=\left[\begin{array}{c|c}
          A & u s^{T} \otimes X \\\hline
          z v^{T} \otimes Y & D 
          \end{array} \right] 
    \in \mathbb{C}^{(mn+kr) \times (mn+kr) },$
where $A=[A_{ij}]$ is $m \times m$ block matrix with $A_{ij}\in \mathbb{C}^{n \times n}$, $u, v \in \mathbb{C}^{m}$, $X \in \mathbb{C}^{n \times r}$, $Y \in \mathbb{C}^{r \times n}$ and $D=[D_{ij}]$ is $k \times k$ block matrix with $D_{ij}\in \mathbb{C}^{r \times r}$. Define the block transpose of $\mathcal{A}$ by 
$\mathcal{A}^{\mathcal{B}}
    := \left[\begin{array}{c|c}
          A^{\mathcal{B}} & v z^{T} \otimes X \\\hline
          s u^{T} \otimes Y &  D^{\mathcal{B}} \end{array} 
      \right].$
\end{definition}

Observe that $\mathcal{A}$ is block-symmetric if and only if $A$, $D$  are block-symmetric and $u = v$, $z=s$. Consider the set of double ansatz pencils $\mathbb{DL}(G) := \mathbb{L}_{1}(G) \cap \mathbb{L}_{2}(G)$. 
%We write $(\mathbb{L}_k(\lam), v_k, w_k) \in \mathbb{L}_k(G)$ to mean that $\mathbb{L}_k(\lam) := \mathbb{A}_k(L_k(\lam), v_k, w_k)$ for $k =1, 2.$ 
Let $(\mathbb{L}(\lambda), v, w)  \in \mathbb{L}_{1}(G)$ be given by  

$$\mathbb{L}(\lambda) 
    = \left[\begin{array}{c|c}
          L(\lam) & -ve_{k}^{T} \otimes B \\\hline
          we_{m}^{T} \otimes C & K(\lam) \\
      \end{array}\right] \in \mathbb{L}_{1}(G)$$
with $(L(\lam), v) \in \mathcal{L}_{1}(A)$ and $(L(\lam), w) \in \mathcal{L}_{1}(D)$. 
%                             
%Now consider the set of double ansatz pencils $\mathbb{DL}(G):=\mathbb{L}_{1}(G) \cap \mathbb{L}_{2}(G).$    Let $ (\mathbb{T}(\lam), v) \in
 % \mathbb{L}_1(G)$ be given by $$\mathbb{T}(\lambda)=\left[
 %                           \begin{array}{c|c}
 %          L(\lambda) & -v \otimes B \\
 %                             \hline
%          e_m^{T} \otimes C & D
%                            \end{array}
%                 \right], \mbox{ where }
 %                (L(\lambda),v) \in\mathcal{L}_1(P).$$
Then it is easily seen that $(\mathbb{L}(\lambda),v, w)$ $\in \mathbb{L}_1(G) \iff (\mathbb{L}(\lambda)^{\mathcal{B}},v, w) \in \mathbb{L}_2(G)$  whenever $(L(\lambda), v) \in  \mathcal{DL}(A)$ and $(K(\lambda), w) \in  \mathcal{DL}(D).$ Consequently, we have $(\mathbb{L}(\lambda),v, w) \in \mathbb{DL}(G) \iff (L(\lam), v) \in \mathcal{DL}(P)$, $(K(\lambda), w) \in \mathcal{DL}(D)$ and $ v= e_m$, $w=e_k$.  Since an ansatz vector $v$ uniquely determines~\cite{mmmm06} a pencil in $ \mathcal{DL}(A)$, we conclude that $\mathbb{DL}(G)$ consist of a single pencil $(\mathbb{L}(\lam), e_m, e_k)$ given by

$$ \mathbb{L}(\lam) 
       = \left[ \begin{array}{c|c}
               L(\lambda) & -e_me_{k}^{T} \otimes B \\\hline
               e_k e_m^{T} \otimes C & K(\lambda)
         \end{array} \right],$$
where $(L(\lambda), e_m) \in \mathcal{DL}(A)$, $(K(\lambda), e_k) \in  \mathcal{DL}(D).$ We refer to $\mathbb{L}(\lam) \in \mathbb{DL}(G)$ as a double ansatz pencil of $G(\lam)$ as well as of $\mathcal{S}(\lam).$

\begin{definition}[\cite{vardulakis}]
Let $\mathcal{S}_1 (\lambda)$ and $\mathcal{S}_2 (\lambda)$ be system matrices with extended forms,
	
 \[\mathcal{S}^{e}_{i} (\lambda) 
      = \left[\begin{array}{cc|c} 
                I_{q-r_i} & 0 & 0 \\ 0 & A_i(\lambda) & B_i (\lambda) \\ \hline 0 & C_i (\lambda) & D_i(\lambda)
	  \end{array}\right] 
 \]
where $A_i(\lambda)\in \mathbb{C}[\lambda]^{r_i \times r_i}$, $B_i(\lambda)\in \mathbb{C}[\lambda]^{r_i \times m}$, $C_i(\lambda) \in \mathbb{C}^{p\times r_i}$, $D(\lambda) \in \mathbb{C}^{p\times m}$, $n_i = deg|A_i(\lambda)|$ and $q\geq max\{n_1, n_2\}$. Then, $\mathcal{S}_1 (\lambda)$ and $\mathcal{S}_2 (\lambda)$ are said to be strong system equivalent (SSE) if the exist unimodular matrices $U(\lambda)$, $V(\lambda) \in \mathbb{C}[\lambda]^{q \times q}$ and $X(\lambda)\in \mathbb{C}[\lambda]^{p\times q}$, $Y(\lambda) \in \mathbb{C}[\lambda]^{q\times m}$ such that,

{\footnotesize
$$
  \left[\begin{array}{cc} 
         U(\lambda) & 0 \\ 
         X(\lambda) & I_p
  \end{array}\right] 
  \left[\begin{array}{ccc} 
         I_{q-r_1} & 0 & 0 \\ 0 & A_1(\lambda) & B_1 (\lambda) \\ 
         0 & C_1 (\lambda) & D_1(\lambda) 
  \end{array}\right] 
= \left[\begin{array}{ccc} 
         I_{q-r_2} & 0 & 0 \\
         0 & A_2(\lambda) & B_2 (\lambda) \\
         0 & C_2 (\lambda) & D_2(\lambda)
  \end{array}\right] 
  \left[ \begin{array}{cc}
         V(\lambda) & Y(\lambda)\\ 
         0 & I_m 
  \end{array} \right]. 
$$} 
\end{definition}

\begin{definition}[System Linearization]
A pencil of the form {\footnotesize $\mathbb{L} = \left[ \begin{array}{c|c} L(\lambda) & \mathcal{B} \\ \hline \mathcal{C} & K(\lambda) \end{array}\right]$} {\small $\in \mathbb{C}^{(nm+r)\times (nm+r)}$}, where $L(\lambda)$ is an $nm \times nm $ and $K(\lambda)$ is an $rk \times rk $ matrix pencil, is called a system linearization of $\mathcal{S}(\lambda)$ if $\mathcal{S}(\lambda)$ is strict system equivalent to $\mathbb{L}(\lambda)$.
\end{definition}

\begin{definition}[Rosenbrock Linearization]
Let $\mathcal{S} (\lambda)$ be the system matrix and $m = deg(A) \geq 2$ and $k = deg(D) \geq 2$. Then an $(mn+kr) \times (mn+kr)$ system matrix $\mathbb{L} (\lambda)$ given by 
		
$$
 \mathbb{L} (\lambda) 
     = \left[\begin{array}{c|c} 
            L(\lambda) & \mathcal{C} \\ \hline
			\mathcal{B} & K(\lambda) 
       \end{array}\right],
$$  
where $L(\lambda)= X- \lambda Y$, $K(\lambda) = H - \lambda K$  with $K$ being non-singular, is said to be a Rosenbrock linearization of $\mathcal{S} (\lambda)$ provided that there are $mn\times mn$ unimodular matrix polynomials $U(\lambda)$ and $V(\lambda)$ and $kr\times kr$ unimodular  matrix polynomials $\widetilde{U}(\lam)$ and $\widetilde{V}(\lam)$ such that

$$
\left[\begin{array}{c|c} 
        U(\lambda) &  \\ \hline
		&  \tilde{U}(\lam) 
\end{array} \right]
\mathbb{L}(\lambda)
\left[\begin{array}{c|c} 
       V(\lambda) &  \\  \hline
       &  \tilde{V}(\lam) 
       \end{array}\right] 
= \left[\begin{array}{c|c|c} 
       I_{(m-1)n} &  &  \\    \hline 
       &  \mathcal{S} (\lambda) & \\
       \hline 
       &  &  I_{(k-1)r} 
   \end{array}  \right]
$$
for all $\lambda \in \mathbb{C}$. 
\end{definition} 
		
%\begin{remark}
%If $\mathbb{L}(\lambda) $ is a Rosenbrock linearization of $\mathcal{S}(\lambda)$ , then $\Sigma_L$ is controllable(resp. observable) if and only if $\Sigma$ is controllable(resp. observable). Moreover, the LTI systems $\Sigma$ and $\Sigma_L$ have the same finite zeroes(transmission zeroes, invariant zeroes, input/output decoupling zeroes) and same finite poles. Further, for all $\lambda \in \mathbb{C}$, $ U(\lambda) (\mathcal{X} - \lambda \mathcal{Y}) V(\lambda) = diag(I_{(m-1)n}, P(\lambda)) $ and $U(\lambda) \mathbb{G} (\lambda) V(\lambda) = diag(I_{(m-1)n}, G(\lambda)).$
%\end{remark}
	
\begin{theorem}\label{mackey01}	
Let $A(\lambda) = \sum_{j=0}^{m} {\lambda^i A_j}$ with $A_m \ne 0$ be an $ n\times n $ matrix polynomial (regular or singular) and $ L(\lambda) = \lambda X + Y $ be a matrix pencil.
\begin{enumerate}[label=(\alph*)]			
	\item If $L(\lambda) \in \mathcal{L}_1 (A) $ and has right ansatz vector 
          $e_1$, then, 
		 $$X = \left[\begin{array}{c|c} 
                       A_m & X_{12} \\ \hline
                       0 & -Z	
                 \end{array}\right] \qquad 
		  Y = \left[\begin{array}{c|c}
                       Y_{11} & A_0 \\ \hline
                       Z & 0 
                \end{array}\right],$$	
		 where $ X_{12}$, $Y_{11} \in \mathbb{C}^{n \times (m-1)n} $ such that {\small $ X_{12} + Y_{11} = \left[\begin{array}{cccc} A_{m-1} & A_{m-2} & \cdots & A_{1} \end{array}\right] $} and $ Z \in \mathbb{C}^{(m-1)n \times (m-1)n}$.
   
    \item If $L(\lambda) \in \mathcal{L}_1 (A)$ and has the right ansatz vector 
          $v \ne 0$, then there exists a non-singular matrix $M \in \mathbb{C}^{m\times m}$ such that $Mv = e_1$ and the pencil $(M\otimes I_n) L(\lambda)$ is given by 
		 $$(M\otimes I_n) L(\lambda) 
              = \lambda \left[ \begin{array}{c|c} 
                            A_m & X_{12} \\ \hline 
                            0 & -Z 
                        \end{array} \right]  
                + \left[ \begin{array}{c|c} 
                        Y_{11} & A_0 \\ \hline
                        Z & 0 
                  \end{array} \right]$$	 
          for some $Z \in \mathbb{C}^{(m-1)n \times (m-1)n}$ and $X_{12}, Y_{11}$ are as in $(a)$.
          
	\item If $L(\lambda) \in \mathcal{L}_2 (A)$ and has left ansatz vector $w \ne      0$, then there exists a non-singular matrix $K \in \mathbb{C}^{m\times 
          m}$ such that $K^T w = e_1$ and the pencil $ L(\lambda) (K \otimes I_n)$ is given by 
		 $$L(\lambda) (K\otimes I_n) 
             = \lambda \left[\begin{array}{c|c} 
                           A_m & 0 \\ \hline 
                           X_{21} & -Z 
                       \end{array} \right] 
               + \left[\begin{array}{c|c} 
                           Y_{11} & Z \\ \hline 
                           A_0 & 0 
                      \end{array}\right],$$ 
         where $X_{21}$, $Y_{11} \in \mathbb{C}^{(m-1)n \times n}$ are such that $ X_{21} + Y_{11} = \left[\begin{array}{ccc} A^{T}_m & \cdots & A^{T}_1  \end{array}\right] $ and $ Z \in \mathbb{C}^{(m-1)n\times (m-1)n}$.
		
	\end{enumerate}
\end{theorem}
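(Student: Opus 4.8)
The statement is the matrix-polynomial form of the $\mathcal{L}_1$/$\mathcal{L}_2$ structure theorems of \cite{mmmm06}, and the plan is to recover it from the shifted-sum characterizations already in play. The engine for (a) is the fact (the matrix-polynomial case of \cite{mmmm06}, which also underlies the kernel computation in the proof of Theorem~\ref{cl1s}) that $L(\lambda)=\lambda X+Y$ lies in $\mathcal{L}_1(A)$ with right ansatz vector $v$ exactly when the column shifted sum satisfies $X\boxplus Y = v\otimes\left[\begin{array}{cccc}A_m & A_{m-1} & \cdots & A_0\end{array}\right]$. Setting $v=e_1$ and writing $X=[X_{ij}]$, $Y=[Y_{ij}]$ as block $m\times m$ matrices with $n\times n$ blocks, I would read this identity off one block-row at a time. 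Block row $1$ gives $X_{1,1}=A_m$, $Y_{1,m}=A_0$ and $X_{1,j+1}+Y_{1,j}=A_{m-j}$ for $1\le j\le m-1$; setting $X_{12}:=\left[\begin{array}{ccc}X_{1,2}&\cdots&X_{1,m}\end{array}\right]$ and $Y_{11}:=\left[\begin{array}{ccc}Y_{1,1}&\cdots&Y_{1,m-1}\end{array}\right]$, this reads $X_{12}+Y_{11}=\left[\begin{array}{ccc}A_{m-1}&\cdots&A_1\end{array}\right]$. For each block row $i\ge2$ the identity forces $X_{i,1}=0$, $Y_{i,m}=0$ and $X_{i,j+1}=-Y_{i,j}$ for $1\le j\le m-1$; letting $Z\in\mathbb{C}^{(m-1)n\times(m-1)n}$ be the matrix whose $(i-1,j)$ block is $Y_{i,j}$ for $2\le i\le m$, $1\le j\le m-1$, these relations are precisely $X=\left[\begin{array}{cc}A_m & X_{12}\\ 0 & -Z\end{array}\right]$, $Y=\left[\begin{array}{cc}Y_{11} & A_0\\ Z & 0\end{array}\right]$, with $X_{12}$ (equivalently $Y_{11}$) and $Z$ arbitrary. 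That is (a).

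For (b), given $(L(\lambda),v)\in\mathcal{L}_1(A)$ with $v\ne0$, pick any nonsingular $M\in\mathbb{C}^{m\times m}$ with $Mv=e_1$. From $L(\lambda)(\Lambda\otimes I_n)=v\otimes A(\lambda)$ we get $(M\otimes I_n)L(\lambda)(\Lambda\otimes I_n)=(M\otimes I_n)(v\otimes A(\lambda))=(Mv)\otimes A(\lambda)=e_1\otimes A(\lambda)$, so $(M\otimes I_n)L(\lambda)$ — still a pencil, with coefficient matrices $(M\otimes I_n)X$ and $(M\otimes I_n)Y$ — lies in $\mathcal{L}_1(A)$ with right ansatz vector $e_1$, and (a) applies to it.

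Part (c) is the left-sided mirror of (a)--(b). One route is to repeat the argument verbatim with the row shifted sum $\widehat{\boxplus}$ in place of $\boxplus$ and the corresponding $\mathcal{L}_2$-characterization, choosing a nonsingular $K$ with $K^Tw=e_1$ so that $L(\lambda)(K\otimes I_n)$ has left ansatz vector $e_1$. An alternative is to transpose: $L(\lambda)\in\mathcal{L}_2(A)$ with left ansatz $w$ if and only if $L(\lambda)^T\in\mathcal{L}_1(\widetilde{A})$ with right ansatz $w$, where $\widetilde{A}(\lambda):=\sum_j\lambda^j A_j^T$; apply (b) to $L(\lambda)^T$, then take ordinary transposes of the resulting identity (which turns $(K\otimes I_n)L(\lambda)^T$ into $L(\lambda)(K^T\otimes I_n)$ and transposes each block) to land on the block form claimed in (c).

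The one point requiring care is the bookkeeping in (a): one must verify that the chain $X_{i,j+1}=-Y_{i,j}$ together with $X_{i,1}=0$ and $Y_{i,m}=0$ across all block rows $i\ge2$ really collapses to a single free block matrix $Z$ per pencil, and that block row $1$ pins down $A_m$ and $A_0$ without over-constraining the pencil. Everything else — the Kronecker manipulations in (b) and the transposition in (c) — is routine.
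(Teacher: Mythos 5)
Your proposal is correct; note that the paper itself states Theorem~\ref{mackey01} without proof, since it is quoted from the cited reference \cite{mmmm06}, and your reconstruction via the column shifted-sum identity $X\boxplus Y=e_1\otimes[A_m\ \cdots\ A_0]$ (reading off block row $1$ for $A_m$, $A_0$ and the constraint $X_{12}+Y_{11}=[A_{m-1}\ \cdots\ A_1]$, and block rows $i\ge 2$ for the $Z$/$-Z$ chain), together with the Kronecker reduction $Mv=e_1$ for (b) and transposition for (c), is exactly the standard argument from that source. The block-row bookkeeping you flag as the delicate point checks out, so nothing further is needed.
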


All the matrices that appear in the block labelled $Z$ have the same rank. Hence, for $L(\lambda) \in \mathcal{L}_1 (A)$ or $L(\lambda) \in \mathcal{L}_2 (A)$ it makes sense to talk about the Z-rank of $L(\lambda)$.

\begin{definition}
	The \textit{Z-rank} of $L(\lambda) \in \mathcal{L}_1 (A)$ is the rank of any matrix in the block labelled $Z$ in Theorem~\ref{mackey01} under any reduction of $L(\lambda)$ of the form therein. If $Z$ is non-singular, we say that $L(\lambda) \in \mathcal{L}_1 (A)$ has full $Z-\rank$.
	
	Similarly, the $Z-\rank$ of $L(\lambda) \in \mathcal{L}_2 (A)$ is the rank of any matrix appearing in the block labelled $Z$. 	
\end{definition}

Next two theorems show that almost all the pencils in $\mathbb{L}_1 (G)$ and $\mathbb{L}_2 (G)$ are linearizations of $\mathcal{S}(\lambda)$.

\begin{theorem}\label{lin01}
Let 
  $\mathbb{L}(\lambda) 
    = \left[ \begin{array}{c|c} 
          L(\lambda) & -ve_{k}^{T}\otimes B \\ \hline 
          we^T _m \otimes C & K(\lambda)  
      \end{array} \right] 
      \in \mathbb{L}_{1} (G)$, 
where $(L(\lambda), v) \in \mathcal{L}_1 (A)$, $(K(\lambda), w) \in \mathcal{L}_1 (D)$ and $v \ne 0$, $w \ne 0$. If $L(\lambda)$ has full Z-rank then there exist $mn\times mn$ unimodular matrix polynomials $U(\lambda)$ and $V(\lambda)$ such that $ U(\lambda)^{-1} (e_m \otimes I_n) = v\otimes I_n $ and $(e^{T}_m \otimes I_n) V(\lambda)^{-1} = e^{T}_m \otimes I_n$. Also, if  $K(\lambda)$ has full Z-rank then there exist $kr\times kr$ unimodular matrix polynomials $\widetilde{U}(\lambda)$ and $\widetilde{V}(\lambda)$ such that $ \widetilde{U}(\lambda)^{-1} (e_k\otimes I_r) = w\otimes I_r $ and $(e^{T}_k \otimes I_r) \widetilde{V}(\lambda)^{-1} = e^{T}_k \otimes I_r$. Further, we have $\diag\left(U(\lambda), \widetilde{U}(\lambda)\right) \mathbb{L}(\lambda) \diag\left(V(\lambda), \widetilde{V}(\lambda)\right) = \diag\left(I_{(m-1)n}, \mathcal{S}(\lambda), I_{(k-1)r}\right) $ for all $\lambda \in \mathbb{C}$. Thus $\mathbb{L}(\lambda)$ is a Rosenbrock linearization of $\mathcal{S}(\lambda)$.
\end{theorem}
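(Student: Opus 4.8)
The plan is to reduce $\mathbb{L}(\lambda)$ to $\diag(I_{(m-1)n},\mathcal{S}(\lambda),I_{(k-1)r})$ by processing the two diagonal blocks $L(\lambda)$ and $K(\lambda)$ separately with the single-matrix $\mathcal{L}_{1}$ theory and then assembling; the ansatz structure is exactly what keeps the off-diagonal blocks under control during the assembly.

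First I would treat the block $L(\lambda)$. Since $(L(\lambda),v)\in\mathcal{L}_{1}(A)$ with $v\neq 0$, pick a nonsingular $M$ with $Mv=e_m$; then $(M\otimes I_n)L(\lambda)\in\mathcal{L}_{1}(A)$ has ansatz vector $e_m$, so by the $e_m$-version of Theorem~\ref{mackey01} it has the block form in which $A_m$ sits in the bottom-right $n\times n$ block, $A_0$ in the bottom-left, and a single block $Z\in\mathbb{C}^{(m-1)n\times(m-1)n}$ occupies the off-block-diagonal positions. The full $Z$-rank hypothesis says $Z$ is nonsingular, and this invertibility is precisely what allows one to clear all remaining off-block-diagonal entries by unimodular row and column operations built from $Z^{-1}$ and the coefficient matrices $A_0,\dots,A_{m-1}$ (the Horner-type elimination of~\cite{mmmm06}). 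Tracking how each elementary factor acts on the boundary objects $e_m\otimes I_n$ (from the left) and $e_m^T\otimes I_n$ (from the right), one obtains unimodular $U(\lambda),V(\lambda)$ with $U(\lambda)L(\lambda)V(\lambda)=\diag(I_{(m-1)n},A(\lambda))$ and, in addition, $U(\lambda)^{-1}(e_m\otimes I_n)=v\otimes I_n$ and $(e_m^T\otimes I_n)V(\lambda)^{-1}=e_m^T\otimes I_n$; equivalently $U(v\otimes I_n)=e_m\otimes I_n$ and $(e_m^T\otimes I_n)V=e_m^T\otimes I_n$. Running the identical argument on $(K(\lambda),w)\in\mathcal{L}_{1}(D)$, which also has full $Z$-rank, yields $\widetilde{U}(\lambda),\widetilde{V}(\lambda)$ with $\widetilde{U}K\widetilde{V}=\diag(I_{(k-1)r},D(\lambda))$, $\widetilde{U}(w\otimes I_r)=e_k\otimes I_r$ and $(e_k^T\otimes I_r)\widetilde{V}=e_k^T\otimes I_r$.

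Next I would assemble. Set $\mathbb{T}(\lambda):=\diag(U,\widetilde{U})\,\mathbb{L}(\lambda)\,\diag(V,\widetilde{V})$. Its $(1,1)$ block is $ULV=\diag(I_{(m-1)n},A(\lambda))$ and its $(2,2)$ block is $\widetilde{U}K\widetilde{V}=\diag(I_{(k-1)r},D(\lambda))$. For the $(1,2)$ block, factor $ve_k^T\otimes B=(v\otimes I_n)(e_k^T\otimes B)$ and $e_me_k^T\otimes B=(e_m\otimes B)(e_k^T\otimes I_r)$ and use the boundary identities:
\begin{align*}
U(-ve_k^T\otimes B)\widetilde{V}
 &=-\bigl(U(v\otimes I_n)\bigr)(e_k^T\otimes B)\widetilde{V}
  =-(e_me_k^T\otimes B)\widetilde{V}\\
 &=-(e_m\otimes B)\bigl((e_k^T\otimes I_r)\widetilde{V}\bigr)
  =-e_me_k^T\otimes B,
\end{align*}
and symmetrically the $(2,1)$ block is $\widetilde{U}(we_m^T\otimes C)V=e_ke_m^T\otimes C$. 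Hence the only entries of $\mathbb{T}(\lambda)$ lying outside the identity blocks are $A(\lambda)$, $-B$, $C$, $D(\lambda)$, and they occupy the rows and columns indexed by the last block of the first group and the last block of the second group, where they form exactly $\left[\begin{array}{c|c}A(\lambda)&-B\\ \hline C&D(\lambda)\end{array}\right]=\mathcal{S}(\lambda)$. Bringing those two blocks adjacent by a constant (hence unimodular) block permutation that acts only within the second group — which can be folded into $\widetilde{U}$ and $\widetilde{V}$ — gives $\mathbb{T}(\lambda)=\diag(I_{(m-1)n},\mathcal{S}(\lambda),I_{(k-1)r})$ for all $\lambda\in\mathbb{C}$, so $\mathbb{L}(\lambda)$ is a Rosenbrock linearization of $\mathcal{S}(\lambda)$.

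The real obstacle is the first step, not the assembly, which is pure Kronecker bookkeeping. Producing \emph{some} unimodular equivalence $L(\lambda)\sim\diag(I_{(m-1)n},A(\lambda))$ is just the known fact that a pencil of full $Z$-rank in $\mathcal{L}_{1}(A)$ is a (strong) linearization; what must be shown is that the reducing matrices can simultaneously be arranged so that $U^{-1}$ has last block-column $v\otimes I_n$ and $V$ fixes $e_m^T\otimes I_n$, since this is exactly what forces the transformed coupling blocks to land in the same rows and columns as $A(\lambda)$ and $D(\lambda)$ with no residual terms. That requires carrying out the reduction explicitly — the block scaling by $M\otimes I_n$ followed by the $Z^{-1}$-based elimination — rather than invoking its mere existence.
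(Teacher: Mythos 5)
Your proposal is correct and follows essentially the same route as the paper: normalize the ansatz vectors by constant nonsingular matrices $M,N$, invoke Theorem~\ref{mackey01} together with the full $Z$-rank reduction of \cite{mmmm06, DA19} to obtain unimodular $U,V,\widetilde U,\widetilde V$ with the stated boundary identities, and then assemble the coupling blocks by Kronecker-product bookkeeping. If anything you are more careful than the paper at the assembly stage — you explicitly transform the off-diagonal blocks and note the block permutation needed to make $A(\lambda)$, $-B$, $C$, $D(\lambda)$ contiguous, whereas the paper's final displayed matrix leaves the coupling blocks untransformed and its index conventions for the $K(\lambda)$ reduction are not fully consistent.
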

	
\begin{proof}
There exists a non-singular $M\in \mathbb{C}^{m\times m}$ such that $Mv=e_1$. Then, ($M\otimes I_n)L(\lambda) \in \mathcal{L}_1(A)$ with right ansatz vector $e_1$. Hence by theorem (\ref{mackey01}), we have 

$$
(M\otimes I_n) L(\lambda) 
    = \lambda \left[\begin{array}{c|c} 
                 A_m & X_{12} \\ \hline 
                 0 & -Z 
              \end{array} \right] 
    + \left[\begin{array}{c|c} 
          Y_{11} & A_0 \\ \hline 
          Z & 0 
      \end{array}\right]
$$      
with non-singular matrix $Z\in \mathbb{C}^{(m-1)n \times (m-1)n}.$ Let $\widehat{L}(\lambda) = (M\otimes I_n) L(\lambda)$. Then there exist unimodular matrix polynomials $\widehat{U}(\lambda)$ and $\widehat{V}(\lambda)$ such that,

$$
\widehat{U}(\lambda)\widehat{L}(\lambda)\widehat{V}(\lambda) 
     = \widehat{U}(\lambda) (M\otimes I_n ) L(\lambda) \widehat{V}(\lambda)  
     = \left[\begin{array}{c|c} 
           I_{(m-1)n} &   \\ \hline
           & A(\lambda) 
      \end{array} \right] 
$$
and $\widehat{U}(\lambda)^{-1}(e_m \otimes I_n) = e_1 \otimes I_n$ and $ (e^T_m \otimes I_n ) \widehat{V}(\lambda)^{-1} = e^T_m \otimes I_n$. Consequently, we have, 
$\left(\widehat{U}(\lambda)(M\otimes I_n)\right)^{-1}\left(e_m \otimes I_n\right)= \left(M^{-1}\otimes I_n\right)\widehat{U}(\lambda)^{-1}= v\otimes I_n$, see proof of Theorem 4.4 of \cite{DA19}. Similarly, there exists a non-singular $N\in \mathbb{C}^{k\times k}$ such that $Nw=e_1$. Then, ($N\otimes I_r)K(\lambda) \in \mathcal{L}_1(D) $ with right ansatz vector $e_1$. Hence by theorem (\ref{mackey01}), we have 

$$
(N\otimes I_r) K(\lambda) 
   = \lambda \left[\begin{array}{c|c} 
                D_k & S_{12} \\ \hline 
                0 & -Z_{1} 
             \end{array} \right] 
     + \left[\begin{array}{c|c} 
           H_{11} & D_0 \\ \hline 
           Z_1 & 0 
        \end{array} \right]
$$
with non-singular matrix $Z\in \mathbb{C}^{(k-1)r \times (k-1)r}. $ Let $\widehat{K}(\lambda) = (N\otimes I_r) K(\lambda)$. Then there exist unimodular matrix polynomials $\widehat{P}(\lambda)$ and $\widehat{Q}(\lambda)$ such that,

$$
\widehat{P}(\lambda) \widehat{K}(\lambda) \widehat{Q}(\lambda) 
      = \widehat{P}(\lambda) (N\otimes I_r ) K(\lambda) \widehat{Q}(\lambda)  
      = \left[\begin{array}{c|c} 
              D(\lambda) &   \\ \hline
              & I_{(k-1)r} 
        \end{array} \right] 
$$
and $\widehat{P}(\lambda)^{-1}(e_k \otimes I_r) = e_1 \otimes I_r$ and $ (e^T_k \otimes I_r ) \widehat{Q}(\lambda)^{-1} = e^T_k \otimes I_r$. Consequently, we have, 
$(\widehat{P}(\lambda)(N\otimes I_r))^{-1}(e_k \otimes I_r)= (N^{-1}\otimes I_r)\widehat{P}(\lambda)^{-1}= w\otimes I_r$.
By setting $U(\lambda) = \widehat{U}(\lambda)(M\otimes I_n)$ and $V(\lambda) = \widehat{V}(\lambda)$, and $\widetilde{U}(\lambda) = \widehat{P}(\lambda)(N\otimes I_r)$ and $\widetilde{V}(\lambda) = \widehat{Q}(\lambda)$ we have 

 {\footnotesize
\begin{align*}
   &\left[\begin{array}{c|c} 
            U(\lambda) & 0 \\ \hline 
            0 & \widetilde{U}(\lambda) 
    \end{array}\right] 
    \mathbb{L}(\lambda) 
    \left[\begin{array}{c|c} 
           V(\lambda) & 0 \\ \hline
           0 & \widetilde{V}(\lambda) 
    \end{array}\right] \\ 
  &=\left[\begin{array}{c|c} 
          \widehat{U}(\lambda)(M\otimes I_n) & 0 \\ \hline
          0 & \widehat{P}(\lambda)(N\otimes I_r) 
          \end{array}\right] 
    \left[\begin{array}{c|c} 
          L(\lambda) & -ve_{k}^{T}\otimes B \\ \hline 
          we^T_m \otimes C & K(\lambda)
          \end{array}\right] 
    \left[ \begin{array}{c|c} 
         \widehat{V}(\lambda) & 0 \\ \hline 
         0 &  \widehat{Q}(\lambda) 
    \end{array} \right] \\ 
  &=\left[\begin{array}{c|c} 
        \widehat{U}(\lambda)(M\otimes I_n) & 0 \\ \hline
        0 & \widehat{P}(\lambda)(N\otimes I_r)
    \end{array} \right]\times \\ 
  & \quad\left[\begin{array}{c|c} 
        L(\lambda) & (\widehat{U}(\lambda)(M\otimes I_n))^{-1} (-e_m \otimes B)(e^T_k \otimes I_r ) \widehat{Q}(\lambda)^{-1} \\ \hline 
        (\widehat{P}(\lambda)(N\otimes I_r))^{-1}(e_k \otimes I_r)(e^T_m \otimes C)\widehat{V}^{-1} & K(\lambda) 
    \end{array} \right] \\
  & \quad\times\left[\begin{array}{c|c}
       \widehat{V}(\lambda) & 0 \\ \hline
       0 & \widehat{Q}(\lambda) 
    \end{array} \right]\\ 
  &=\left[\begin{array}{c|c} 
       \widehat{U}(\lambda)(M\otimes I_n)L(\lambda) \widehat{V}(\lambda) & -ve_{k}^{T} \otimes B \\ \hline 
       we^T_m \otimes C & \widehat{P}(\lambda)(N\otimes I_r) K(\lambda)\widehat{Q}(\lambda) 
     \end{array}\right] \\ 
  &=\left[\begin{array}{cc|cc}
       I_{(m-1)n} &  & -ve_{k}^{T} \otimes B & \\
       & A(\lambda) & &  \\ \hline
       we^T_m \otimes C &   & D(\lambda) &  \\
       & & & I_{(k-1)r }
    \end{array} \right].
\end{align*}}
\end{proof}

By Lemma~\ref{L1L2r}, we know that $\mathbb{L}_{2} (G) = (\mathbb{L}_{1}(G^{T}))^{T}.$ Thus we have the following theorem. 
 	
\begin{theorem}
Let 
$\mathbb{L}(\lambda)
    =\left[ \begin{array}{c|c} 
          L(\lambda) & -e_m z^T  \otimes B \\ \hline 
          e_k s^{T}\otimes C & K(\lambda) 
     \end{array}\right] 
     \in \mathbb{L}_2 (G)$, 
where $(L(\lambda), z)$ $\in \mathcal{L}_2 (A), (K(\lam), s) \in \mathcal{L}_2 (D)$ and $s, z \ne 0$. If $L(\lambda)$ has full Z-rank then there exist $mn\times mn$ unimodular matrix polynomials $U(\lambda)$ and $V(\lambda)$ such that $ U(\lambda)^{-1} (e_m \otimes I_n) = e_m \otimes I_n $ and $ (e^T _m \otimes I_n) V(\lambda)^{-1} = z^T \otimes I_n$. Also, if $K(\lambda)$ has full Z-rank then there exist $kr\times kr$ unimodular matrix polynomials $\Tilde{U}(\lambda)$ and $\Tilde{V}(\lambda)$ such that $ \Tilde{U}(\lambda)^{-1} (e_k \otimes I_r) = e_k \otimes I_r$ and $(e^T _k \otimes I_r) \Tilde{V}(\lambda)^{-1} = s^T \otimes I_r$. Further, we have $\diag\left(U(\lambda), \Tilde{U}(\lambda)) \mathbb{L} (\lambda) \diag(V(\lambda), \Tilde{V}(\lambda)\right) = \diag\left(I_{(m-1)n}, \mathcal{S}(\lambda), I_{(k-1)r}\right)$ for all $\lambda \in \mathbb{C}$. Thus $\mathbb{L}(\lambda)$ is a Rosenbrock linearization of $\mathcal{S}(\lambda)$.
\end{theorem}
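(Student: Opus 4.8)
The plan is to obtain this as the transpose of the previous theorem. Write $G^{T}(\lambda)=D^{T}(\lambda)+B^{T}(A^{T}(\lambda))^{-1}C^{T}$ for the transposed transfer function; it is the transfer function of the state--space system with data $(A^{T},C^{T},B^{T},D^{T})$, and its system matrix is the block transpose $\mathcal{S}(\lambda)^{\mathcal{B}}$, which coincides with the ordinary transpose $\mathcal{S}(\lambda)^{T}$ up to sign changes on the two off--diagonal blocks, i.e.\ up to conjugation by $\diag(I_{n},-I_{r})$; such a conjugation is immaterial for strict system equivalence. First I would apply Lemma~\ref{L1L2r}: since $\mathbb{L}(\lambda)\in\mathbb{L}_{2}(G)$, its transpose lies in $\mathbb{L}_{1}(G^{T})$. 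Combining this with the block characterization of $\mathbb{L}_{1}$ in Theorem~\ref{chtL1G} (with the roles of $B$ and $C$ interchanged) one reads off that $\mathbb{L}(\lambda)^{T}$ has top--left block $L(\lambda)^{T}\in\mathcal{L}_{1}(A^{T})$ and bottom--right block $K(\lambda)^{T}\in\mathcal{L}_{1}(D^{T})$, with nonzero right ansatz vectors equal (up to sign) to $z$ and $s$ respectively, because transposition carries $\mathcal{L}_{2}$ bijectively onto $\mathcal{L}_{1}$ while preserving ansatz vectors.

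Next I would check that the full $Z$-rank hypothesis is preserved under transposition: applying the reduction of Theorem~\ref{mackey01}(c) to $L(\lambda)\in\mathcal{L}_{2}(A)$ and then transposing produces a reduction of $L(\lambda)^{T}\in\mathcal{L}_{1}(A^{T})$ of the form in Theorem~\ref{mackey01}(a) whose $Z$-block is the transpose of the original one; since rank is transpose--invariant, $L(\lambda)$ has full $Z$-rank iff $L(\lambda)^{T}$ does, and likewise for $K(\lambda)$ and $K(\lambda)^{T}$.

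Then I would feed $\mathbb{L}(\lambda)^{T}\in\mathbb{L}_{1}(G^{T})$ into Theorem~\ref{lin01}. This yields $mn\times mn$ unimodular $\widehat{U}(\lambda),\widehat{V}(\lambda)$ with $\widehat{U}(\lambda)^{-1}(e_{m}\otimes I_{n})=z\otimes I_{n}$ and $(e_{m}^{T}\otimes I_{n})\widehat{V}(\lambda)^{-1}=e_{m}^{T}\otimes I_{n}$, and $kr\times kr$ unimodular $\widehat{\widetilde{U}}(\lambda),\widehat{\widetilde{V}}(\lambda)$ with $\widehat{\widetilde{U}}(\lambda)^{-1}(e_{k}\otimes I_{r})=s\otimes I_{r}$ and $(e_{k}^{T}\otimes I_{r})\widehat{\widetilde{V}}(\lambda)^{-1}=e_{k}^{T}\otimes I_{r}$, such that $\diag(\widehat{U},\widehat{\widetilde{U}})\,\mathbb{L}(\lambda)^{T}\,\diag(\widehat{V},\widehat{\widetilde{V}})=\diag(I_{(m-1)n},\mathcal{S}(\lambda)^{\mathcal{B}},I_{(k-1)r})$. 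Transposing this identity, using $(\mathcal{S}(\lambda)^{\mathcal{B}})^{T}=\mathcal{S}(\lambda)$ (up to the $\diag(I_{n},-I_{r})$ conjugation, which I would absorb into the outer factors), and setting $U(\lambda):=\widehat{V}(\lambda)^{T}$, $V(\lambda):=\widehat{U}(\lambda)^{T}$, $\widetilde{U}(\lambda):=\widehat{\widetilde{V}}(\lambda)^{T}$, $\widetilde{V}(\lambda):=\widehat{\widetilde{U}}(\lambda)^{T}$, one gets unimodular polynomials satisfying $\diag(U,\widetilde{U})\,\mathbb{L}(\lambda)\,\diag(V,\widetilde{V})=\diag(I_{(m-1)n},\mathcal{S}(\lambda),I_{(k-1)r})$. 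Transposing the four normalization identities turns them into precisely $U(\lambda)^{-1}(e_{m}\otimes I_{n})=e_{m}\otimes I_{n}$, $(e_{m}^{T}\otimes I_{n})V(\lambda)^{-1}=z^{T}\otimes I_{n}$, $\widetilde{U}(\lambda)^{-1}(e_{k}\otimes I_{r})=e_{k}\otimes I_{r}$ and $(e_{k}^{T}\otimes I_{r})\widetilde{V}(\lambda)^{-1}=s^{T}\otimes I_{r}$, which is the assertion; hence $\mathbb{L}(\lambda)$ is a Rosenbrock linearization of $\mathcal{S}(\lambda)$.

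The main thing to be careful about is the bookkeeping rather than any genuinely new computation: the distinction between the ordinary transpose and the block transpose of $\mathcal{S}(\lambda)$, the interchange of the roles of $B$ and $C$ on passing to $G^{T}$, and the exact placement of the signs on the off--diagonal blocks so that the ansatz vectors land where the statement claims. Once those conventions are pinned down, the argument is essentially a transcription of the proof of Theorem~\ref{lin01} through Lemma~\ref{L1L2r}.
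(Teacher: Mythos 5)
Your proposal is correct and follows essentially the same route as the paper: both reduce to the $\mathbb{L}_1$ case by transposition (via Lemma~\ref{L1L2r} and Theorem~\ref{mackey01}), apply Theorem~\ref{lin01} to the transposed pencil, and then transpose the resulting unimodular reduction back, setting $U=\widehat{V}^{T}$, $V=\widehat{U}^{T}$, etc. If anything, you are more careful than the paper about the sign discrepancy between $\mathcal{S}(\lambda)^{T}$ and the system matrix of $G^{T}$ and about which space ($\mathcal{L}_1(A^T)$ versus $\mathcal{L}_2(A)$) the transposed blocks land in, points the paper's write-up glosses over.
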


\begin{proof}
	Let $Q\in \mathbb{C}^{m\times m}$ and $R \in \mathbb{C}^{k\times k}$ be a non-singular matrix such that $z^T Q = e^T_1$. Then, by theorem \ref{mackey01}, there is a non-singular matrix $Z\in \mathbb{C}^{(m-1)n \times (m-1)n}$ and $Z_1\in \mathbb{C}^{(k-1)r \times (k-1)r}$ such that,
\[ L(\lambda)(Q\otimes I_n) = \lambda \left[ \begin{array}{c|c} A_m & 0 \\ \hline X_{21} & -Z \end{array} \right] + \left[ \begin{array}{c|c} Y_{11} & Z \\ \hline A_0 & 0 \end{array} \right] \]
and 
\[ K(\lambda)(R\otimes I_r) = \lambda \left[ \begin{array}{c|c} D_k & 0 \\ \hline S_{21} & -Z_1 \end{array} \right] + \left[ \begin{array}{c|c} H_{11} & Z_1 \\ \hline D_0 & 0 \end{array} \right] \]
Hence we have
\begin{equation} \label{5}
(Q^T \otimes I_n) L(\lambda)^T = \lambda \left[ \begin{array}{c|c} A^T_m &  X^T_{21}\\ \hline 0  & -Z^T \end{array} \right] + \left[ \begin{array}{c|c} Y^T_{11} & A^T_0 \\ \hline Z^T & 0 \end{array} \right]     
\end{equation}  and 
\begin{equation} \label{6}
(R^T \otimes I_r) K(\lambda)^T = \lambda \left[ \begin{array}{c|c} D^T_k & S^T_{21}  \\ \hline 0 & -Z_1^T \end{array} \right] + \left[ \begin{array}{c|c} H^T_{11} & D^T_0  \\ \hline Z_{1}^T & 0 \end{array} \right].    
\end{equation} 
Since $L(\lambda) \in \mathcal{L}_2 (A)$ and $K(\lambda) \in \mathcal{L}_2 (D)$, by (\ref{5}) and (\ref{6}), we have $L(\lambda)^T \in \mathcal{L}_2 (A)$ and $K(\lambda)^T \in \mathcal{L}_2 (D)$ with full $Z$-rank with 
$\left[ \begin{array}{c} 
z \\
s \end{array} \right]$ as the right ansatz vector. Then by theorem \ref{lin01}, there exist unimodular matrix polynomials $\hat{U}(\lambda)$,  $\hat{V}(\lambda)$ and $\widehat{P}(\lambda)$,  $\widehat{Q}(\lambda)$ such that, such that,
	\[ \widehat{U}(\lambda) L(\lambda)^T \widehat{V}(\lambda) = \left[ \begin{array}{c|c} I_{(m-1)n} &  \\ \hline    & A(\lambda)^T \end{array} \right] \implies \widehat{V}(\lambda)^T L(\lambda) \widehat{U}(\lambda)^T = \left[ \begin{array}{c|c} I_{(m-1)n} &  \\ \hline    & A(\lambda) \end{array} \right] \]  and 
 \[ \widehat{P}(\lambda)K(\lambda)^{T} \widehat{Q}(\lambda) = \left[ \begin{array}{c|c} D(\lambda)^{T} &   \\ \hline    & I_{(k-1)r} \end{array} \right] =  \widehat{Q}(\lambda)^{T}  K(\lambda) \widehat{P}(\lambda)^{T} = \left[ \begin{array}{c|c} D(\lambda) &   \\ \hline    & I_{(k-1)r} \end{array} \right] \]
Also, by theorem \ref{lin01} we have  $\left(\widehat{V}(\lambda)\right)^{-T} (e_m \otimes I_n) = e_m \otimes I_n $ and $(e^T_m \otimes I_n)\left(\widehat{U}(\lambda)\right)^{-T}= s^T \otimes I_n$ 
 and $\widehat{Q}(\lambda)^{-1}(e_k \otimes I_r) = e_k \otimes I_r$ and $ (e^T_k \otimes I_r ) \widehat{P}(\lambda)^{-1} = z^T \otimes I_r$. By setting $U(\lambda) =\widehat{V}(\lambda)^T$ and $V(\lambda)= \widehat{U}(\lambda)^T$, $\Tilde{U}(\lambda) =\widehat{Q}(\lambda)^T$ and $\Tilde{V}(\lambda)= \widehat{P}(\lambda)^T$ the desired result follows.
\end{proof}

\subsection{Hermitian Linearization}
Consider $\mathcal{S}(\lam)$ and $G(\lam)$ are given in (\ref{sysmatrix01}) and (\ref{tfunction01}), respectively.
We now construct the Hermitian (resp., symmetric) linearization of $G(\lam)$  and $\mathcal{S}(\lam)$ when $G(\lam)$  and $\mathcal{S}(\lam)$ are Hermitian (resp., symmetric).
We define the adjoint of $\mathcal{S}(\lam)$  by
$$\mathcal{S}^{*}(\lam) = \left[
                                         \begin{array}{c|c}
                                           A^{*}(\lam) & C^{*} \\
                                           \hline
                                           -B^{*} & D^{*}(\lam) \\
                                         \end{array}
                                       \right],
$$ where $A^{*}(\lam) = \sum_{j = 1}^{m}\lam^{j}A_j^{*}$. The transposes of $\mathcal{S}(\lam)$ and $A(\lam)$ are defined similarly. The system matrix $\mathcal{S}(\lam)$ is said to be Hermitian if  $\mathcal{S}^*(\lam) = \mathcal{S}(\lam)$ for $\lam \in \C$. The transfer function  $G(\lam)$  is said to be Hermitian (resp., symmetric) when $\mathcal{S}(\lam)$ is
Hermitian (resp., symmetric).

%{\bf Definition. (Transpose and Adjoint of Matrix Rational Functions).}
%Let $\textbf{R}(\lambda) = \sum_{i=0}^{m}A_{i}\lambda^{i}+L(C-\lambda %D)^{-1}U$ be a matrix rational functions of degree $n$. Then
%\begin{equation}
%\textbf{R}^{T}(\lambda) := \sum_{i=0}^{m}A_{i}^{T}\lambda^{i}+B^{T}(C^{T}-\lambda D^{T})^{-1}A^{T} \mbox{  and  } \textbf{R}^{*}(\lambda) := \sum_{i=0}^{m}A_{i}^{*}\lambda^{i}+B^{*}(C^{*}-\lambda D^{*})^{-1}A^{*}
%\end{equation} defines the transpose and adjoint of $\textbf{R}(\lambda)$.

\begin{center}
Basic definition of structures of matrix rational functions $G(\lambda) = D(\lambda) + C A(\lambda)^{-1}B.$
\end{center}
\begin{center}
\begin{tabular}{|r|r|r|c|c|c|}
  \hline
  % after \\: \hline or \cline{col1-col2} \cline{col3-col4} ...
  Symmetric & $\mathcal{S}^{T}(\lambda) = \mathcal{S}(\lambda)$ & $A_{i}^{T} = A_{i}, \mbox{ and }  C^{T} = B, D_{i}^{T} = D_{i}$  \\
  \hline
  Skew Symmetric & $\mathcal{S}^{T}(\lambda) = -\mathcal{S}(\lambda)$ & $A_{i}^{T} = -A_{i} \mbox{ and }, C^{T} = -B, D_{i}^{T} = -D_{i} $\\ \hline
  Hermitian & $\mathcal{S}^{*}(\lambda) =\mathcal{S}(\bar{\lambda})$ & $A_{i}^{*} = A_{i}, \mbox{ and }  C^{*} = B, D_{i}^{*} = D_{i}$  \\
      \hline
\end{tabular}
\end{center}

Define $$\mathbb{S}(G) := \left\{\lambda \mathbb{X}+ \mathbb{Y} \in \mathbb{L}_{1}(G) : \mathbb{X}^{T} = \mathbb{X}, \mathbb{Y}^{T} = \mathbb{Y} \right\}$$ the set of all symmetric pencils in $\mathbb{L}_{1}(G)$, when $G$ is symmetric. i.e.,
$$\lambda \mathbb{X} +\mathbb{Y} = \lambda \left[
                           \begin{array}{c|c}
                             X_{11} & X_{12} \\
                             \hline
                             X_{21} & X_{22} \\
                           \end{array}
                         \right]+ \left[
                                    \begin{array}{c|c}
                                      Y_{11} & Y_{12} \\
                                      \hline
                                      Y_{21} & Y_{22} \\
                                    \end{array}
                                  \right] \in \mathbb{L}_{1}(G) \mbox{ and } \mathbb{X}^{T} = \mathbb{X}, \mathbb{Y}^{T} = \mathbb{Y}
$$ So,
$$ X_{21} = 0 \Rightarrow X_{21}^{T} = X_{12} = 0, \mbox{ and }  Y_{21} = Y_{12}^{T} = w e_{m}^{T} \otimes C = w e_{m}^{T} \otimes B^{T},$$
$$ 0 \boxplus Y_{12} = -ve_{k+1}^{T} \otimes B \Rightarrow Y_{21} = Y_{12}^{T} = -e_{k}v^{T}  \otimes B^{T} = w e_{m}^{T} \otimes B^{T}$$
Therefore, we get
\begin{center}
\begin{tabular}{|r|}
  \hline
  % after \\: \hline or \cline{col1-col2} \cline{col3-col4} ...
  $  -e_{k}v^{T} = w e_{m}^{T} \implies v=e_m \text {  and   }   w=e_k. $ \\
  \hline
\end{tabular}
\end{center}
%$$ v = -e_{d}$$
So,
\begin{align*}
&\mathbb{S}(G) = \biggr\{\lambda \left[
                                          \begin{array}{c|c}
                                            X_{11} & 0 \\
                                            \hline
                                            0 & X_{22} \\
                                          \end{array}
                                        \right]+ \left[
                                                   \begin{array}{c|c}
                                                     Y_{11} & e_{m}e_{k}^{T}\otimes C^{T} \\
                                                     \hline
                                                     e_{k}e_{m}^{T} \otimes C &Y_{22} \\
                                                   \end{array}
                                                 \right] : 
X_{11}^{T} = X_{11}, \,\, X_{22}^{T} = X_{22},\,\,\,  Y_{11}^{T} = Y_{11},  \\ & \hspace{1.7cm} Y_{22}^{T} = Y_{22}, \,\,\, X_{11}\boxplus Y_{11} = e_m \otimes [A_{m} \ldots A_{0}], \,\,\, X_{22}\boxplus Y_{22} = e_k \otimes [D_{k} \ldots D_{0}]. 
\biggr\}
\end{align*}

\begin{exam}
Let $G(\lambda) =  A_{3}\lambda^{3}+A_{2}\lambda^{2}+A_{1}\lambda+A_{0}+C(D_0-\lambda D_1)^{-1}B$ be a symmetric matrix rational function, then a symmetric linearization is given by
$$\left( \lambda \left[
                                \begin{array}{ccc|c}
                                  0 & 0& A_{3} & 0 \\
                                  0 & A_{3} & A_{2} & 0 \\
                                  A_{3} & A_{2} & A_{1} & 0\\
                                  \hline
                                  0 & 0 & 0 & -D_1\\
                                \end{array}
                              \right] + \left[
                      \begin{array}{ccc|c}
                        0 & -A_{3} & 0 & 0 \\
                        -A_{3} & -A_{2} & 0 & 0 \\
                        0 & 0 & A_{0} & B \\
                        \hline
                        0 & 0 & B^{T} & D_0 \\
                      \end{array}
                    \right] 
\right) \left[
          \begin{array}{c}
            \lambda^{2}x \\
            \lambda x \\
            x \\
           -(D_0-\lambda D_1)^{-1}B x \\
          \end{array}
        \right] = 0.
$$ 
\end{exam}

%\begin{exam}
% Construct one example of symmetric pencil with $A(\lambda)$ of degree 3 and $D(\lambda)$ of degree $2$   
%\end{exam}

\begin{theorem}
Suppose that $G(\lambda)$ is a symmetric rational matrix  and $\mathbb{L}(\lambda) \in \mathbb{L}_{1}(G)$ with right ansatz vector $\left[
\begin{array}{c}
 0 \\
  w \\
   \end{array}
    \right]
$. Then $\mathbb{L}^{T} \in \mathbb{L}_{2}(G)$ with left ansatz vector $\left[
\begin{array}{c}
 0 \\
  w \\
   \end{array}
    \right]$, and $\mathbb{S}(G)\subseteq \mathbb{DL}(G)$.
\end{theorem}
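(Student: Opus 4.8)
The plan is to deduce both statements from Lemma~\ref{L1L2r}, which identifies $\mathbb{L}_2(G)$ with $(\mathbb{L}_1(G^T))^T$, once one observes that symmetry of $G$ forces $G^T=G$ and identifies $\mathbb{L}_1(G^T)$ with $\mathbb{L}_1(G)$. First I would record the structural consequences of the hypothesis: from $\mathcal{S}^T(\lambda)=\mathcal{S}(\lambda)$ we get $A_i^T=A_i$, $D_i^T=D_i$, $C^T=B$ and $B^T=C$, hence $A^T(\lambda)=A(\lambda)$, $D^T(\lambda)=D(\lambda)$, and
\[
G^T(\lambda)=D^T(\lambda)+B^T(A^T(\lambda))^{-1}C^T=D(\lambda)+CA(\lambda)^{-1}B=G(\lambda).
\]

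Next I would compare the explicit descriptions of the two spaces. A pencil of $\mathbb{L}_1(G^T)$ has $(1,1)$ block $L(\lambda)$ with $(L,v)\in\mathcal{L}_1(A^T)$, $(2,2)$ block $K(\lambda)$ with $(K,w)\in\mathcal{L}_1(D^T)$, $(1,2)$ block $-v e_k^T\otimes C^T$, and $(2,1)$ block $w e_m^T\otimes B^T$. Using $A^T=A$ and $D^T=D$ (so that $\mathcal{L}_1(A^T)=\mathcal{L}_1(A)$ and $\mathcal{L}_1(D^T)=\mathcal{L}_1(D)$), together with $C^T=B$ and $B^T=C$, this is exactly a pencil of $\mathbb{L}_1(G)$ with the same internal right ansatz data $\left[\begin{array}{c} v \\ w \end{array}\right]$. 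Hence $\mathbb{L}_1(G^T)=\mathbb{L}_1(G)$ as subspaces. Moreover, since $A^T=A$ and $C^T=B$ give $(A(\lambda)^{-1})^T C^T = A(\lambda)^{-1}B$, the vector $\Lambda_{m-1}\otimes (A(\lambda)^{-1})^T C^T$ used to define membership in $\mathbb{L}_1(G^T)$ coincides with the vector $\Lambda_{m-1}\otimes A(\lambda)^{-1}B$ used for $\mathbb{L}_1(G)$, and $G^T=G$, so the two defining identities are literally the same and the $\Gamma$-ansatz vector $\left[\begin{array}{c} 0 \\ w \end{array}\right]$ is shared. Therefore $\mathbb{L}(\lambda)\in\mathbb{L}_1(G)$ with right ansatz vector $\left[\begin{array}{c} 0 \\ w \end{array}\right]$ also lies in $\mathbb{L}_1(G^T)$ with the same right ansatz vector, and Lemma~\ref{L1L2r} gives $\mathbb{L}^T(\lambda)\in\mathbb{L}_2(G)$; since the proof of that lemma only transposes the defining identity, the left ansatz vector of $\mathbb{L}^T$ is again $\left[\begin{array}{c} 0 \\ w \end{array}\right]$. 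This proves the first assertion.

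For the inclusion $\mathbb{S}(G)\subseteq\mathbb{DL}(G)$, take $\mathbb{L}(\lambda)=\lambda\mathbb{X}+\mathbb{Y}\in\mathbb{S}(G)$. By definition $\mathbb{L}(\lambda)\in\mathbb{L}_1(G)$ and $\mathbb{X}^T=\mathbb{X}$, $\mathbb{Y}^T=\mathbb{Y}$, so $\mathbb{L}^T(\lambda)=\lambda\mathbb{X}^T+\mathbb{Y}^T=\mathbb{L}(\lambda)$. Applying the first assertion to $\mathbb{L}$ yields $\mathbb{L}^T(\lambda)\in\mathbb{L}_2(G)$, i.e.\ $\mathbb{L}(\lambda)\in\mathbb{L}_2(G)$, whence $\mathbb{L}(\lambda)\in\mathbb{L}_1(G)\cap\mathbb{L}_2(G)=\mathbb{DL}(G)$; as $\mathbb{L}(\lambda)$ was arbitrary, $\mathbb{S}(G)\subseteq\mathbb{DL}(G)$. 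The step that requires genuine care is the identification $\mathbb{L}_1(G^T)=\mathbb{L}_1(G)$ and the verification that the right ansatz data transfers unchanged: one must check that the interchange $B\leftrightarrow C$, the transposing of the coefficients of $A$ and $D$, and the signs and positions of the off-diagonal coupling blocks $-v e_k^T\otimes B$ and $w e_m^T\otimes C$ all line up. Once this is settled, both claims follow immediately from Lemma~\ref{L1L2r} and the fact that pencils in $\mathbb{S}(G)$ equal their own transpose.
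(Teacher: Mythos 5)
Your proposal is correct and follows essentially the same route as the paper: both deduce the first claim from Lemma~\ref{L1L2r} combined with $G^T=G$, and then obtain $\mathbb{S}(G)\subseteq\mathbb{DL}(G)$ from the fact that pencils in $\mathbb{S}(G)$ equal their own transpose. Your explicit verification that $\mathbb{L}_1(G^T)=\mathbb{L}_1(G)$ (via $A_i^T=A_i$, $D_i^T=D_i$, $C^T=B$) is a detail the paper leaves implicit, and you rightly invoke $\mathbb{L}^T=\mathbb{L}$ only for pencils in $\mathbb{S}(G)$ rather than for arbitrary members of $\mathbb{L}_1(G)$ as the paper's wording suggests.
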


\begin{proof}
Suppose that $\mathbb{L}(\lambda)\in \mathbb{L}_{1}(G)$ with right ansatz vector $\left[
    \begin{array}{c}
              0 \\
              w \\
         \end{array}
          \right] $ and 
%Then $$\left(L(\lambda).\left[
%                  \begin{array}{c}
%                    \Lambda_{m-1} \otimes A(\lambda)^{-1}B  \\
%                    \Lambda_{k-1} \otimes I_m \\
%                  \end{array}
%                \right] 
%                     \right)^{T} = \left[
%                                           \begin{array}{c}
%                                            0 \\
%                                     w \otimes G(\lambda)  \\
 %                                                    \end{array}
%                                              \right]^{T}  $$
%$$\Rightarrow \left[
%                  \begin{array}{c}
%                    \Lambda_{m-1} \otimes A(\lambda)^{-1}B  \\
%                    \Lambda_{k-1} \otimes I_m \\
%                  \end{array}
%                \right]^{T}.L^{T}(\lambda) 
%                         = \left[
%                         \begin{array}{cc}
%                         0 & w^{T} \otimes G(\lambda) \\
%                                             \end{array}
 %                                                \right]$$
%$$ \left[
%        \begin{array}{c}
%         \Lambda_{m-1} \otimes (CA(\lambda)^{-1})^{T}   \\
%                  \Lambda_{k-1} \otimes I_m   \\
%                          \end{array}
%                          \right]^{T}.L(\lambda)
%                                                 = \left[
%                         \begin{array}{cc}
%                         0 & w^{T} \otimes G(\lambda) \\
%                                             \end{array}
%                                                 \right]
%                      $$ as 
$G^{T}(\lambda) = G(\lambda)$. Now, note that  $\mathbb{L}(\lambda)^{T} = \mathbb{L}(\lambda)$. 
Thus by Lemma~\ref{L1L2r}, we get $L^{T}(\lambda) \in \mathbb{L}_{2} (G)$ with left ansatz vector 
$\left[
           \begin{array}{c}
                  0 \\
                  w \\
                  \end{array}
                     \right]$.
Let $\mathbb{L}(\lambda) \in \mathbb{S}(G) \subseteq \mathbb{L}_{1}(G)$ with right ansatz vector $\left[
                         \begin{array}{c}
                                  0 \\
                                 w \\
                            \end{array}
                                  \right]$. 
Then we get $\mathbb{L}^{T}(\lambda) = \mathbb{L}(\lambda)\in \mathbb{L}_{2}(G)$ as $\mathbb{L}(\lambda)$ is symmetric with left ansatz vector 
$\left[
    \begin{array}{c}
               0 \\
                w \\
                \end{array}
                 \right]$. 
Therefore, $\mathbb{L}(\lambda) \in \mathbb{L}_{2}(G)$. Thus $\mathbb{L}(\lambda) \in \mathbb{DL}(G)$. So $\mathbb{S}(G) \subseteq \mathbb{DL}(G)$.
\end{proof}

Now, we define $$ \mathbb{H}(G) := \{\lambda \mathbb{X} + \mathbb{Y} \in \mathbb{L}_{1}(G) : \mathbb{X}^{*} = \mathbb{X}, \,\,\, \mathbb{Y}^{*} = \mathbb{Y} \},$$ the set of all Hermitian pencils in $\mathbb{L}_{1}(G)$. i.e.,
\begin{align*}
&\mathbb{H}(G) = \biggr\{\lambda \left[
                                          \begin{array}{c|c}
                                            X_{11} & 0 \\
                                            \hline
                                            0 & X_{22} \\
                                          \end{array}
                                        \right]+ \left[
                                                   \begin{array}{c|c}
                                                     Y_{11} & e_{m}e_{k}^{T}\otimes C^{*} \\
                                                     \hline
                                                    e_ke_{m}^{T} \otimes C &Y_{22} \\
                                                   \end{array}
                                                 \right] : 
X_{11}^{*} = X_{11}, \,\, X_{22}^{*} = X_{22}, Y_{12} = Y_{21}^{*},\\ & \hspace{1.7cm}  Y_{11}^{*} = Y_{11},   Y_{22}^{*} = Y_{22}, \,\,\, X_{11}\boxplus Y_{11} = e_m \otimes [A_{m} \ldots A_{0}], \,\,\,  X_{22}\boxplus Y_{22} = e_k \otimes [D_{k} \ldots D_{0}]
\biggr\}.
\end{align*}

\begin{theorem}
Suppose that $G(\lambda)$ is a Hermitian rational matrix, and $\mathbb{L}(\lambda) \in \mathbb{H}(G)$ with right ansatz vector $\left[
\begin{array}{c}
 0 \\
  w \\
   \end{array}
    \right]
$. Then $\mathbb{L}(\lambda) \in \mathbb{DL}(G)$ and $\mathbb{H}(G)\subseteq \mathbb{DL}(G)$.
\end{theorem}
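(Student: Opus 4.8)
The plan is to follow the template of the symmetric theorem that immediately precedes it. Write $\mathbb{L}(\lambda)=\lambda\mathbb{X}+\mathbb{Y}\in\mathbb{H}(G)$. The defining conditions $\mathbb{X}^{*}=\mathbb{X}$ and $\mathbb{Y}^{*}=\mathbb{Y}$ give $\mathbb{L}(\lambda)^{*}=\bar\lambda\,\mathbb{X}^{*}+\mathbb{Y}^{*}=\bar\lambda\,\mathbb{X}+\mathbb{Y}=\mathbb{L}(\bar\lambda)$, so $\mathbb{L}$ is pointwise Hermitian; and by construction $\mathbb{L}\in\mathbb{H}(G)\subseteq\mathbb{L}_{1}(G)$. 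Since $\mathbb{DL}(G)=\mathbb{L}_{1}(G)\cap\mathbb{L}_{2}(G)$, the whole statement reduces to showing that $\mathbb{L}\in\mathbb{L}_{2}(G)$ for an arbitrary $\mathbb{L}\in\mathbb{H}(G)$. I would also record what the hypothesis ``$G$ Hermitian'' supplies: from the Hermitian row of the structure table, $A_{i}^{*}=A_{i}$, $C^{*}=B$, $D_{i}^{*}=D_{i}$, whence $A(\lambda)^{*}=A(\bar\lambda)$, $D(\lambda)^{*}=D(\bar\lambda)$, and therefore $G(\lambda)^{*}=(D(\lambda)+CA(\lambda)^{-1}B)^{*}=D(\bar\lambda)+CA(\bar\lambda)^{-1}B=G(\bar\lambda)$ at every point that is not a pole.

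The core of the proof is a conjugate-transpose counterpart of Lemma~\ref{L1L2r}. Starting from the membership relation of $\mathbb{L}\in\mathbb{L}_{1}(G)$,
$$\mathbb{L}(\lambda)\left[\begin{array}{c}\Lambda_{m-1}\otimes A(\lambda)^{-1}B\\ \Lambda_{k-1}\otimes I_{r}\end{array}\right]=\left[\begin{array}{c}0\\ w\otimes G(\lambda)\end{array}\right],$$
I would take conjugate transposes of both sides. Using $(P\otimes Q)^{*}=P^{*}\otimes Q^{*}$, the fact that $\Lambda_{p-1}$ conjugate-transposes to $\bar{\Lambda}_{p-1}^{T}$, together with $(A(\lambda)^{-1}B)^{*}=B^{*}(A(\lambda)^{*})^{-1}=C\,A(\bar\lambda)^{-1}$ and $G(\lambda)^{*}=G(\bar\lambda)$, this gives
$$\left[\begin{array}{cc}\bar{\Lambda}_{m-1}^{T}\otimes C A(\bar\lambda)^{-1} & \bar{\Lambda}_{k-1}^{T}\otimes I_{r}\end{array}\right]\mathbb{L}(\lambda)^{*}=\left[\begin{array}{cc}0 & \bar{w}^{T}\otimes G(\bar\lambda)\end{array}\right].$$
Replacing $\lambda$ by $\bar\lambda$ throughout — legitimate since the identity holds at every non-pole — and using $\mathbb{L}(\bar\lambda)^{*}=\mathbb{L}(\lambda)$ turns this into an identity of exactly the form used to characterise $\mathbb{L}_{2}(G)$ in the proof of Lemma~\ref{L1L2r}, with left ansatz vector $\bar{w}$ (the adjoint and sign built into the block $(-CA(\lambda)^{-1})^{*}$ in the definition of $\mathbb{L}_{2}(G)$ being precisely what make the conjugation come out in that form). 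Hence $\mathbb{L}\in\mathbb{L}_{1}(G)\cap\mathbb{L}_{2}(G)=\mathbb{DL}(G)$, and since $\mathbb{L}\in\mathbb{H}(G)$ was arbitrary, $\mathbb{H}(G)\subseteq\mathbb{DL}(G)$.

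The only genuine obstacle I anticipate is the bookkeeping in that second step: making conjugation commute past the Kronecker factors in the correct order, applying $\lambda\mapsto\bar\lambda$ uniformly to the pencil, to the vectors $\Lambda_{m-1},\Lambda_{k-1}$, and to $G$, and checking that the sign/adjoint conventions in the definitions of $\mathbb{L}_{1}(G)$ and $\mathbb{L}_{2}(G)$ match up — essentially the verification already carried out, for the transpose, inside the proof of Lemma~\ref{L1L2r}. A shorter, purely structural alternative is also available: the diagonal blocks $L(\lambda)=\lambda X_{11}+Y_{11}$ and $K(\lambda)=\lambda X_{22}+Y_{22}$ of a pencil in $\mathbb{H}(G)$ are Hermitian pencils with $(L,e_{m})\in\mathcal{L}_{1}(A)$ and $(K,e_{k})\in\mathcal{L}_{1}(D)$; conjugate-transposing the sign-symmetric relations $L(\lambda)(\Lambda_{m-1}\otimes I_{n})=e_{m}\otimes A(\lambda)$ and $K(\lambda)(\Lambda_{k-1}\otimes I_{r})=e_{k}\otimes D(\lambda)$ and substituting $\lambda\mapsto\bar\lambda$ shows $(L,e_{m})\in\mathcal{L}_{2}(A)$ and $(K,e_{k})\in\mathcal{L}_{2}(D)$, so that $L\in\mathcal{DL}(A)$ and $K\in\mathcal{DL}(D)$; then $\mathbb{L}^{\mathcal{B}}=\mathbb{L}$ (the diagonal blocks are block-symmetric and the coupling blocks are fixed by $u=v=e_{m}$, $z=s=e_{k}$), and the equivalence $(\mathbb{L},v,w)\in\mathbb{L}_{1}(G)\Leftrightarrow(\mathbb{L}^{\mathcal{B}},v,w)\in\mathbb{L}_{2}(G)$ valid for $\mathcal{DL}$-blocks, recorded earlier in the paper, yields $\mathbb{L}\in\mathbb{L}_{2}(G)$ and hence $\mathbb{L}\in\mathbb{DL}(G)$.
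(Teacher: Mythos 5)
Your main argument is essentially the paper's own proof: conjugate-transpose the $\mathbb{L}_{1}(G)$ ansatz identity, use $G(\lambda)^{*}=G(\bar{\lambda})$ and $\mathbb{L}(\lambda)^{*}=\mathbb{L}(\bar{\lambda})$, and substitute $\lambda\mapsto\bar{\lambda}$ to arrive at the $\mathbb{L}_{2}(G)$ relation with left ansatz vector $\bar{w}$. You are in fact slightly more careful than the paper (which writes $CA(\lambda)^{-1}$ where $CA(\bar{\lambda})^{-1}$ is meant in the intermediate step, and inherits the same sign convention mismatch between the $\pm CA(\lambda)^{-1}$ blocks in the definitions of $\mathbb{L}_{1}(G)$ and $\mathbb{L}_{2}(G)$ that you flag), and your alternative structural route via $\mathcal{DL}(A)$, $\mathcal{DL}(D)$ and block symmetry is a valid shortcut the paper does not take here.
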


\begin{proof}
Since $\mathbb{L}(\lambda)\in \mathbb{L}_{1}(G)$, we have 
$$ \mathbb{L}(\lambda). \left[
                  \begin{array}{c}
                    \Lambda_{m-1} \otimes A(\lambda)^{-1}B  \\
                    \Lambda_{k-1} \otimes I_m \\
                  \end{array}
                \right]  = \left[
       \begin{array}{c}
               0 \\
                w\otimes G(\lambda) \\
              \end{array}
                     \right].$$ 
%Since $G(\lambda)$ and $L(\lambda)$ are Hermitian, we have
%$$\left( L(\lambda).\left[
%                  \begin{array}{c}
%                    \Lambda_{m-1} \otimes A(\lambda)^{-1}B  \\
%                    \Lambda_{k-1} \otimes I_m \\
%                  \end{array}
%                \right] 
%\right)^{*} = \left(\left[
%                      \begin{array}{c}
%                        0 \\
%                        w \otimes G(\lambda) \\
%                      \end{array}
%                    \right]\right)^{*}.$$ 
Since $G^{*} = G(\bar{\lambda})$ and $(\mathbb{L}(\lambda))^{*} = \mathbb{L}(\bar{\lambda})$, we have
$$ \left[
                  \begin{array}{c}
                    \Lambda_{m-1} \otimes A(\lambda)^{-1}B  \\
                    \Lambda_{k-1} \otimes I_m \\
                  \end{array}
                \right] ^{*}. L^{*}(\lambda) = \left[
                      \begin{array}{c}
                       0 \\
                        w^{*}\otimes G^{*}(\lambda) \\
                      \end{array}
                    \right]$$
$$ \Rightarrow \left[
                 \begin{array}{cc}
                   \bar{\Lambda}_{m-1}^{T}\otimes CA(\lambda)^{-1} &  \bar{\Lambda}_{k-1}^{T}\otimes I_m \\
                 \end{array}
               \right] L(\bar{\lambda}) = \left[
                                            \begin{array}{cc}
                                              0 & w^{*} \otimes G(\bar{\lambda})\\
                                            \end{array}
                                          \right].$$ 
This equation holds for all $\lambda$, so we can replace $\bar{\lambda}$ by $\lambda$ to get
$$\left[
    \begin{array}{cc}
\Lambda_{m-1}^{T}\otimes CA(\lambda)^{-1} &   \Lambda_{k-1}^{T}\otimes I_m \\
                 \end{array}
               \right] L(\lambda) = \left[
                                            \begin{array}{cc}
                                              0 & \bar{w}^{T} \otimes G(\lambda) \\
                                            \end{array}
                                          \right].$$
Hence proved.
\end{proof}

Now, we have the following result.

\begin{theorem}
Consider the system matrix $\mathcal{S}(\lambda)$  given in (\ref{sysmatrix01}) with $A(\lambda) =\sum_{i=1}^{m} A_i \lambda^{i}$, $D(\lambda) = \sum_{i=1}^{k} D_i \lambda^{i}$, $m, k >1$. Then the double ansatz pencil 
$\mathbb{L}(\lambda) = \lambda \mathbb{X} + \mathbb{Y}, $ where 
$$\mathbb{X} = \left[
                                \begin{array}{cccc|cccc}
                                   & & & A_{m} & & & &  \\
                                   &  & \rotatebox{40}{$\cdots$} & A_{m-1} &  & & & \\
                                   & \rotatebox{40}{$\cdots$} & \rotatebox{40}{$\cdots$} & \vdots & & & &  \\
                                  A_{m} & A_{m-1} & \cdots& A_{1} & & & & \\
                                  \hline
                                   & & &  & & &  & D_{k}  \\
                                & & &  &  & & \rotatebox{40}{$\cdots$} & D_{k-1} \\
                               & & &   &  & \rotatebox{40}{$\cdots$} & \rotatebox{40}{$\cdots$} & \vdots  \\
                               & & &   &  D_{k} & D_{k-1} & \cdots& D_{1} \\
                                \end{array}
                              \right] $$ $$ Y = \left[
                                \begin{array}{ccccc|ccccc}
                                   & & & -A_{m} & 0 &  & & & &  \\
                                   &  & \rotatebox{40}{$\cdots$} & -A_{m-1} & 0 &  &  & & & \\
                                  & \rotatebox{40}{$\cdots$} &\rotatebox{40}{$\cdots$} & \vdots & \vdots&  & & & & \\
                                  -A_{m} & -A_{m-1} & \cdots& A_{2} & 0 & & & & & \\
                                   0& 0& \cdots& \cdots & A_0 &0  &\cdots &\cdots &0 & B \\
                                  \hline
                                 &  & & &  0& & &  & D_{k} & 0 \\
                              &  & & & \vdots &  &  & \rotatebox{40}{$\cdots$} & D_{k-1} & 0 \\
                              & & & &  \vdots &  & \rotatebox{40}{$\cdots$} & \rotatebox{40}{$\cdots$} & \vdots & 0  \\
                             &  & & & \vdots  &  D_{k} & D_{k-1} & \cdots& D_{1} & \vdots  \\
                             0 & 0& \cdots& \cdots& C & 0 & 0& \cdots& \cdots & D_0   \\
                                \end{array}
                              \right] $$
is block-symmetric with ansatz vector $(e_m, e_k)$. Further, $\mathbb{L}(\lambda)$ is Hermitian (resp., Symmetric) as per $\mathcal{S}(\lambda)$ is Hermitian (resp., Symmetric).

The  pencil $\mathbb{L}(\lambda)$ is a  Rosenbrock linearization of $G(\lambda)$ if and only if  $A_m$ is nonsingular. 
%Equivalently,  $\mathbb{L}(\lambda)$ is a  Rosenbrock  linearization of $G(\lambda)$ if and only if $\mathbb{L}(\lambda)$ is regular.
\end{theorem}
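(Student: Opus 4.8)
The plan is to identify $\mathbb{L}(\lambda)$ with the unique double ansatz pencil of $G(\lambda)$ produced in the discussion just before the theorem, and then to extract the linearization criterion from Theorem~\ref{lin01} together with the explicit block structure of that pencil. First I would write $\mathbb{L}(\lambda)=\lambda\mathbb{X}+\mathbb{Y}$ in the $2\times2$ block form with diagonal blocks $L(\lambda)=\lambda\mathbb{X}_A+\mathbb{Y}_A$ (of size $mn$) and $K(\lambda)=\lambda\mathbb{X}_D+\mathbb{Y}_D$ (of size $kr$), and off-diagonal blocks $-e_me_k^T\otimes B$ and $e_ke_m^T\otimes C$. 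A direct evaluation of the column-shifted sum gives $\mathbb{X}_A\boxplus\mathbb{Y}_A=e_m\otimes[A_m\ \cdots\ A_0]$, so $(L(\lambda),e_m)\in\mathcal{L}_1(A)$; since $\mathbb{X}_A,\mathbb{Y}_A$ are block Hankel, hence block symmetric, $L(\lambda)$ lies in $\mathcal{L}_2(A)$ with the same ansatz as well, i.e.\ $(L(\lambda),e_m)\in\mathcal{DL}(A)$, and likewise $(K(\lambda),e_k)\in\mathcal{DL}(D)$. By the characterization of $\mathbb{DL}(G)$ obtained above, $\mathbb{L}(\lambda)$ is therefore the double ansatz pencil of $G(\lambda)$ with right ansatz vector $(e_m^T\ e_k^T)^T$ and $\mathbb{L}^{\mathcal{B}}=\mathbb{L}$; the Hermitian (resp.\ symmetric) assertion then drops out of this block symmetry together with the coefficient relations $A_i^{*}=A_i$, $D_i^{*}=D_i$ and the corresponding relation between $B$ and $C$ from the structure table (and its transpose version).

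For the criterion, in the direction ``$A_m$ nonsingular $\Rightarrow$ Rosenbrock linearization'' I would first show $L(\lambda)$ has full $Z$-rank. Reducing $L(\lambda)$ by the block reversal permutation $R$ (so that $(R\otimes I_n)L(\lambda)\in\mathcal{L}_1(A)$ has ansatz vector $e_1$) and applying Theorem~\ref{mackey01}(a), the block-Hankel form of $\mathbb{X}_A$ forces the $Z$-block of $L(\lambda)$ to be block upper triangular with $\pm A_m$ on its block diagonal; hence $Z$ is nonsingular exactly when $A_m$ is. The same computation shows $K(\lambda)$ has full $Z$-rank: its $Z$-block carries $\pm D_k$ on the diagonal, and $D_k$ is nonsingular because the lower-right leading coefficient $\mathbb{X}_D$ of a Rosenbrock pencil must be nonsingular. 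Now $\mathbb{L}(\lambda)\in\mathbb{L}_1(G)$ with $(L,e_m)\in\mathcal{L}_1(A)$, $(K,e_k)\in\mathcal{L}_1(D)$, $e_m\neq0$, $e_k\neq0$, and both $L,K$ of full $Z$-rank, so Theorem~\ref{lin01} applies directly and yields unimodular $U,V,\widetilde U,\widetilde V$ with $\diag(U,\widetilde U)\,\mathbb{L}(\lambda)\,\diag(V,\widetilde V)=\diag(I_{(m-1)n},\mathcal{S}(\lambda),I_{(k-1)r})$; thus $\mathbb{L}(\lambda)$ is a Rosenbrock linearization of $\mathcal{S}(\lambda)$, hence of $G(\lambda)$.

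For the converse I would read off the top-left $mn\times mn$ block of the defining identity $\diag(U,\widetilde U)\,\mathbb{L}(\lambda)\,\diag(V,\widetilde V)=\diag(I_{(m-1)n},\mathcal{S}(\lambda),I_{(k-1)r})$: because the outer factors are block diagonal, that block equals $U L(\lambda) V$, and it must equal $\diag(I_{(m-1)n},A(\lambda))$, so $L(\lambda)$ is unimodularly equivalent to $\diag(I_{(m-1)n},A(\lambda))$, i.e.\ $L(\lambda)$ is a linearization of $A(\lambda)$ and in particular a regular pencil. On the other hand, if $A_m$ were singular, then $\mathbb{X}_A$—block anti-triangular with $A_m$ on its anti-diagonal, so $\det\mathbb{X}_A=\pm(\det A_m)^m=0$—would be singular, and by the reduced form of Theorem~\ref{mackey01}(a) the $Z$-block of $L(\lambda)$ would be singular; a nonzero left null vector of $Z$ then produces a nonzero constant left null vector of the reduced pencil valid for all $\lambda$, so $L(\lambda)$ would be a \emph{singular} pencil, contradicting that it is a linearization of the regular matrix $A(\lambda)$ (here the standing hypothesis that $A(\lambda)$ is regular is used). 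Hence $A_m$ is nonsingular, which finishes the ``only if'' direction.

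The step I expect to be the main obstacle is the precise analysis of the $Z$-block of the $\mathcal{DL}$ pencil with ansatz $e_m$—that is, establishing ``$L$ has full $Z$-rank $\iff A_m$ nonsingular'' and ``$\mathbb{X}_A$ singular $\Rightarrow$ $L$ is a singular pencil''—since this requires unwinding the block-Hankel shape of $\mathbb{X}_A$ and $\mathbb{Y}_A$ through the reduction of Theorem~\ref{mackey01}. It is also the only place where the nonsingularity of $D_k$ enters: without it one could only conclude ``$A_m$ and $D_k$ nonsingular'', so that hypothesis is implicitly carried by the requirement that $\mathbb{L}(\lambda)$ be a Rosenbrock pencil (whose lower-right leading coefficient $\mathbb{X}_D$, equivalently $D_k$, must be invertible).
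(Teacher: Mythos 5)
Your proposal is correct in substance, and on the linearization criterion it goes by a genuinely different (and more complete) route than the paper. For the first half (block symmetry, membership in $\mathbb{DL}(G)$ with ansatz $(e_m,e_k)$, and the Hermitian/symmetric claim) you argue essentially as the paper does, only more explicitly via the shifted sums and the block-Hankel shape. For the criterion, the paper's own proof only treats the ``only if'' direction: from $\diag(U,\widetilde U)\,\mathbb{L}(\lambda)\,\diag(V,\widetilde V)=\diag(I_{(m-1)n},\mathcal{S}(\lambda),I_{(k-1)r})$ it reads off that $L(\lambda)$ and $K(\lambda)$ are linearizations of $A(\lambda)$ and $D(\lambda)$, and then cites \cite[Theorem~6.1]{tds} (no $\mathcal{DL}$ pencil linearizes a singular polynomial) to conclude only that $A(\lambda)$ and $D(\lambda)$ are regular; it neither derives the nonsingularity of $A_m$ nor verifies the ``if'' direction. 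You instead compute the $Z$-block of the $\mathcal{DL}$ pencil with ansatz $e_m$ explicitly (after the block-reversal reduction it is block triangular with $\pm A_m$ on the diagonal), use Theorem~\ref{lin01} to get the ``if'' direction from full $Z$-rank, and in the ``only if'' direction turn a left null vector of a singular $Z$ into a constant left null vector of $L(\lambda)$, contradicting that a linearization of the regular $A(\lambda)$ must be a regular pencil. Your route costs the extra bookkeeping with Theorem~\ref{mackey01}, but it actually establishes the stated equivalence, which the paper's citation-based sketch does not.

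One caveat: in your ``if'' direction the justification that $D_k$ is nonsingular is circular, since you cannot invoke ``the leading coefficient of the $K$-block of a Rosenbrock pencil is nonsingular'' when being a Rosenbrock linearization is exactly what is to be proved. As you note yourself, what your argument really proves is that $\mathbb{L}(\lambda)$ is a Rosenbrock linearization if and only if both $A_m$ and $D_k$ are nonsingular; the theorem as stated implicitly assumes $D_k$ invertible (otherwise $\mathbb{X}_D$, the leading coefficient of $K(\lambda)$, is singular and $\mathbb{L}(\lambda)$ cannot meet the definition of a Rosenbrock linearization at all). Since the paper's proof does not address this point either, this is an imprecision of the statement rather than a defect specific to your argument, but in a final write-up you should state the $D_k$ hypothesis explicitly rather than derive it circularly.
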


\begin{proof}
Consider the pencil $$\mathbb{L}(\lam) = \left[ \begin{array}{c|c}
           L(\lambda) & -e_me_{k}^{T} \otimes B \\
                              \hline
          e_k e_m^{T} \otimes C & K(\lambda)
                            \end{array} \right], $$ where  $(L(\lambda), e_m) \in\mathcal{DL}(P),$ $ (K(\lambda), e_k) \in\mathcal{DL}(D)$ are the block-symmetric pencils with ansatz vector $e_m$ and $e_k$, respectively. Then note that $\mathbb{L}(\lambda)$ is the block-symmetric pencil with ansatz vector  $(e_m, e_k)$ and $\mathbb{L}(\lambda) \in \mathbb{DL}(G)$ . Also, it is obvious that 
$\mathbb{L}(\lambda)$ is Hermitian (resp., Symmetric) as per $\mathcal{S}(\lambda)$ is Hermitian (resp., Symmetric).

Suppose that  $\mathbb{L}(\lam)$  is a Rosenbrock linearization of $G(\lambda).$  Then there exist $mn\times mn$ unimodular matrix polynomials $U(\lambda)$ and $V(\lambda)$ and $kr\times kr$ unimodular  matrix polynomials $\tilde{U}(\lam)$ and $\tilde{V}(\lam)$ such that
			$$\left[  \begin{array}{c|c} U(\lambda) &  \\ \hline
				&   \tilde{U}(\lam) \end{array} \right]\mathbb{L}(\lambda)\left[  \begin{array}{c|c} V(\lambda) &  \\  \hline   &  \tilde{V}(\lam) \end{array}    \right] = \left[ \begin{array}{c|c} I_{(m-1)n} &   \\    \hline &  \mathcal{S} (\lambda)
\end{array}  \right]$$
for all $\lambda \in \mathbb{C}$.   Hence it follows that $L(\lam)$ is a linearization of $A(\lam) $ and $K(\lam)$ is a linearization of $D(\lam).$ Since $L(\lam) \in \mathcal{DL}(A), K(\lam) \in \mathcal{DL}(D)$ and none of the pencils in $\mathcal{DL}(A), \mathcal{DL}(D)$ is a linearization when $A(\lam), D(\lam)$ is singular \cite[Theorem~6.1]{tds}, the polynomial $A(\lam), D(\lambda)$ must be regular.    
%Now, since $\textbf{p}(x; e_m)$ has a root at $\infty$ and $ L(\lam) \in \mathcal{DL}(P)$ with ansatz vector $e_m$,  by Theorem~\ref{dlpthm} we conclude that $P(\lam)$ does not have an eigenvalue at $\infty.$ This implies that $A_m$ is nonsingular.

%Conversely, suppose that $A_m$ is nonsingular. Then $A(\lam)$ is regular.  Since $ L(\lam) \in \mathcal{DL}(A)$ with ansatz vector $e_m,$  by Theorem~\ref{dlpthm}, $L(\lam)$ is a linearization of $A(\lam).$  Consequently, by Corollary~\ref{lcor}, $\mathbb{L}(\lam)$ is a Rosenbrock linearization of $\mathcal{S}(\lambda).$ Hence by Theorem~\ref{L1_with_v}, $\mathbb{L}(\lam)$ is a Rosenbrock  linearization of $G(\lam).$

%Next, observe that if $A_m$ is singular then  $\mathbb{L}(\lam)$ is a singular pencil. Hence if $\mathbb{L}(\lam)$ is regular then $A_m$ is nonsingular. On the other hand, if $A_m$ is nonsingular then $L(\lam)$ is regular. Since $C A(\lam )^{-1}B $ is strictly proper, it is easily seen that $\det(\mathbb{L}(\lam))\neq 0$ for some $\lam \in \C.$ This shows that $\mathbb{L}(\lam)$ is regular if and only if $A_m$ is nonsingular. 
\end{proof}

\section{Linearizations via non-monomial polynomial bases} 

Although it is customary to consider polynomials in standard monomial basis $\{1,\lambda, \ldots,\lambda^j, \ldots \},$ there are applications in which it is useful to consider other polynomial bases.  Let us consider $\{\phi_0(\lambda), \phi_1(\lambda), \ldots ,\phi_{m-1}(\lambda) \}$ be a basis for the space of scalar polynomials of degree at most $m-1.$ Define

{\footnotesize
$$
\Lambda := \left[\begin{array}{ccccc}
         \lambda^{m-1} & \lambda^{m-2} & \cdots & \lambda & 1 \end{array}\right]^{T} \quad
\mbox{and}\quad 
\Lambda_{\phi}:=\left[\begin{array}{cccc} 
      \phi_0(\lambda) & \phi_1(\lambda) & \cdots & \phi_{m-1}(\lambda \end{array}\right]^{T}.$$
Let $\Phi$ be the unique nonsingular constant matrix such that $\Phi \Lambda = \Lambda_{\phi}.$}

It is shown in \cite{tds} that the vector spaces $\mathcal{L}_1(P)$ and $\mathcal{L}_2(P)$ can be generalized by replacing the monomial basis $\Lambda$  by the  basis $\Lambda_{\phi}$ as follows. Define

\begin{equation*}\label{tils11}
\begin{array}{l}
\widetilde{\mathcal{L}}_1(P) = \left\{\widetilde{L}(\lambda): \,\, \widetilde{L}(\lambda)(\Lambda_{\phi} \otimes I_{n}) = v\otimes P(\lambda), \,\, v \in \mathbb{C}^{m}\right\},\\
\widetilde{\mathcal{L}}_2(P) = \left\{\widetilde{L}(\lambda) : \,\, (\Lambda_{\phi}^{T} \otimes I_{n})\widetilde{L}(\lambda) = w^{T}\otimes P(\lambda),  \,\, w \in \mathbb{C}^{m}\right\},
\end{array}
\end{equation*}
where $\widetilde{L}(\lambda)$ is an $mn \times mn$ matrix pencil. Here $v$ is the right ansatz vector for $\widetilde{L}(\lambda) \in \widetilde{\mathcal{L}}_1(P)$ and $w$ is the left ansatz vector for $\widetilde{L}(\lambda) \in \widetilde{\mathcal{L}}_2(P)$.  Note that if $\widetilde{L} (\lambda) \in \widetilde{\mathcal{L}}_1(P) $ then $L(\lambda):=\widetilde{L} (\lambda)  (\Phi \otimes I_n) \in \mathcal{L}_1(P). $ In fact, it is easily  seen that the map 

$$ \widetilde{\mathcal{L}}_1(P)\longrightarrow \mathcal{L}_1(P), \quad \widetilde{L} (\lambda) \longmapsto \widetilde{L} (\lambda)  (\Phi \otimes I_n)$$ 
is a linear isomorphism~\cite{tds}.

This shows that the pencils in $\mathcal{L}_1(P)$ and $\widetilde{\mathcal{L}}_1(P)$ are strictly equivalent. Hence a pencil $L(\lambda)\in \mathcal{L}_1(P)$ is a linearization of $P(\lambda)$ if and only if the corresponding pencil $\widetilde{L}(\lambda) \in  \widetilde{\mathcal{L}}_1(P)$ is a linearization of $P(\lambda).$ 

Consider the system matrix $\mathcal{S}(\lambda)$ and 
the transfer function as defined in (\ref{sysmatrix01}) and (\ref{tfunction01}). Let

{\footnotesize
\begin{align*}
\Lambda_{m-1} &:= \left[\begin{array}{ccccc}
         \lambda^{m-1} & \lambda^{m-2} & \cdots & \lambda & 1 \end{array}\right]^{T} \quad
\mbox{and}\quad 
\Lambda_{\phi}&:=\left[\begin{array}{cccc} 
      \phi_0(\lambda) & \phi_1(\lambda) & \cdots & \phi_{m-1}(\lambda \end{array}\right]^{T}\\
\Lambda_{k-1} &:= \left[\begin{array}{ccccc}
         \lambda^{k-1} & \lambda^{k-2} & \cdots & \lambda & 1 \end{array}\right]^{T} \quad
\mbox{and}\quad 
\Lambda_{\psi}&:=\left[\begin{array}{cccc} 
      \phi_0(\lambda) & \phi_1(\lambda) & \cdots & \phi_{m-1}(\lambda \end{array}\right]^{T}.
\end{align*}}
Let $\Phi$ be the unique nonsingular constant matrix such that $\Phi\Lambda_{m-1}=\Lambda_{\phi}$ and $\Psi$ be the unique nonsingular matrix such that $\Psi\Lambda_{k-1}=\Lambda_{\psi}$. We know that if $\widetilde{L}(\lambda)\in\widetilde{\mathcal{L}}_{1}(A)$ then $L(\lambda)=\widetilde{L}(\lambda)\left(\Phi\otimes I_{n}\right) \in \mathcal{L}_{1}(A)$ and if $\widetilde{K}(\lambda)\in\widetilde{\mathcal{L}}_{1}(A)$ then $K(\lambda)=\widetilde{K}(\lambda)\left(\Psi\otimes I_{r}\right) \in \mathcal{L}_{1}(D)$. We define $\widetilde{\mathbb{L}}_1(G)$ and $\widetilde{\mathbb{L}}_2(G)$ for the system matrix $\mathcal{S}(\lambda)$ and the transfer function $G(\lambda)$ as follows:

{\small
\beano
 \widetilde{\mathbb{L}}_{1}(G) &:=&
    \bigg\{\left[\begin{array}{c|c}
       \widetilde{L}(\lambda) & -v e_{k}^{T}\Psi^{-1} \otimes B  \\\hline
       we_{m}^{T} \Phi^{-1} \otimes C & \widetilde{K}(\lambda) \\
    \end{array}\right]  :
 (\widetilde{L}(\lambda) , v) \in \widetilde{\mathcal{L}}_1(A),  (\widetilde{K}(\lambda) , w) \in \widetilde{\mathcal{L}}_1(D)  \bigg\}, \\
\widetilde{\mathbb{L}}_{2}(G) &:= & 
    \bigg\{\left[\begin{array}{c|c}
       \widetilde{L}(\lambda) & -\Psi^{-T}e_k s^T \otimes B\\\hline
       \Phi^{-T}e_{m}z^{T} \otimes C & \widetilde{K}(\lambda)
    \end{array}\right]:   
  ( \widetilde{L}(\lambda), s)\in \widetilde{\mathcal{L}}_2(A), (\widetilde{K}(\lambda), z) \in \widetilde{\mathcal{L}}_2(D) \bigg\}.
 \eeano}

\begin{theorem}
Let 
{\small $\widetilde{\mathbb{T}}(\lambda)
    =\left[\begin{array}{c|c}
        \widetilde{L}(\lambda) & -v e_{k}^{T}\Psi^{-1} \otimes B  \\\hline
        we_{m}^{T} \Phi^{-1} \otimes C & \widetilde{K}(\lambda) \\
     \end{array}\right] 
    \in \widetilde{\mathbb{L}}_{1}(G)$},
where 
{\small$(\widetilde{L}(\lambda), v) \in \widetilde{\mathcal{L}}_1(A)$} and 
{\small$(\widetilde{K}(\lambda), w) \in \widetilde{\mathcal{L}}_1(D)$} with  $v, w\neq 0.$ Then 
{\small $\mathbb{T}(\lambda)
       =\widetilde{\mathbb{T}}(\lambda)
           \left[\begin{array}{c|c}
              \Phi \otimes I_n &  \\\hline
              &  \Psi \otimes I_r 
        \end{array} \right] 
        \in \mathbb{L}_{1}(G)$}.
In fact, the map   
{\small$\mathbb{J}: \widetilde{\mathbb{L}}_1(G)\longrightarrow \mathbb{L}_1(G),~\widetilde{\mathbb{T}}(\lambda) \longmapsto \widetilde{\mathbb{T}}(\lambda) \left[\begin{array}{c|c}
           \Phi \otimes I_n&  \\
           \hline
            &  \Psi \otimes I_r \end{array} \right] $}
is a linear isomorphism.
\end{theorem}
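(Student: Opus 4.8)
The plan is to verify directly that the map $\mathbb{J}$ is well-defined, linear, and bijective. First I would check that $\mathbb{J}$ lands in $\mathbb{L}_1(G)$: given $\widetilde{\mathbb{T}}(\lambda)\in\widetilde{\mathbb{L}}_1(G)$ with blocks $(\widetilde{L}(\lambda),v)\in\widetilde{\mathcal{L}}_1(A)$ and $(\widetilde{K}(\lambda),w)\in\widetilde{\mathcal{L}}_1(D)$, multiplying on the right by $\diag(\Phi\otimes I_n,\,\Psi\otimes I_r)$ transforms the $(1,1)$ block $\widetilde{L}(\lambda)$ into $L(\lambda):=\widetilde{L}(\lambda)(\Phi\otimes I_n)\in\mathcal{L}_1(A)$ and the $(2,2)$ block $\widetilde{K}(\lambda)$ into $K(\lambda):=\widetilde{K}(\lambda)(\Psi\otimes I_r)\in\mathcal{L}_1(D)$ by the isomorphism $\widetilde{\mathcal{L}}_1(\cdot)\to\mathcal{L}_1(\cdot)$ recalled above (from \cite{tds}). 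The off-diagonal blocks transform as $(-ve_k^T\Psi^{-1}\otimes B)(\Psi\otimes I_r) = -ve_k^T\otimes B$ and $(we_m^T\Phi^{-1}\otimes C)(\Phi\otimes I_n) = we_m^T\otimes C$, using the mixed-product rule for the Kronecker product, $(PQ)\otimes(RS)=(P\otimes R)(Q\otimes S)$. Hence $\mathbb{T}(\lambda)$ has exactly the block form $\left[\begin{smallmatrix} L(\lambda) & -ve_k^T\otimes B \\ we_m^T\otimes C & K(\lambda)\end{smallmatrix}\right]$ with $(L(\lambda),v)\in\mathcal{L}_1(A)$, $(K(\lambda),w)\in\mathcal{L}_1(D)$, which is precisely the defining form of $\mathbb{L}_1(G)$.

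Next I would observe that $\mathbb{J}$ is linear: it is right multiplication by the fixed matrix $\diag(\Phi\otimes I_n,\,\Psi\otimes I_r)$, and both the domain $\widetilde{\mathbb{L}}_1(G)$ and the codomain $\mathbb{L}_1(G)$ are vector spaces (the latter by the earlier Proposition), so this is immediate. For injectivity, note that $\Phi$ and $\Psi$ are nonsingular, hence so is $\diag(\Phi\otimes I_n,\,\Psi\otimes I_r)$; if $\mathbb{J}(\widetilde{\mathbb{T}})=0$ then $\widetilde{\mathbb{T}}=0$. For surjectivity, given $\mathbb{T}(\lambda)=\left[\begin{smallmatrix} L(\lambda) & -ve_k^T\otimes B \\ we_m^T\otimes C & K(\lambda)\end{smallmatrix}\right]\in\mathbb{L}_1(G)$, set $\widetilde{\mathbb{T}}(\lambda):=\mathbb{T}(\lambda)\,\diag(\Phi^{-1}\otimes I_n,\,\Psi^{-1}\otimes I_r)$; then the $(1,1)$ block becomes $L(\lambda)(\Phi^{-1}\otimes I_n)\in\widetilde{\mathcal{L}}_1(A)$ and the $(2,2)$ block $K(\lambda)(\Psi^{-1}\otimes I_r)\in\widetilde{\mathcal{L}}_1(D)$, again by the inverse of the isomorphism from \cite{tds}, while the off-diagonal blocks become $-ve_k^T\Psi^{-1}\otimes B$ and $we_m^T\Phi^{-1}\otimes C$ by the same Kronecker identity, so $\widetilde{\mathbb{T}}\in\widetilde{\mathbb{L}}_1(G)$ and $\mathbb{J}(\widetilde{\mathbb{T}})=\mathbb{T}$.

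The only point requiring care is the bookkeeping on the off-diagonal blocks — one must confirm that $e_k^T\Psi^{-1}$ paired with the right factor $\Psi$ collapses correctly, i.e. $(e_k^T\Psi^{-1})(\Psi)=e_k^T$, and that $B$ is untouched because $I_r(I_r)=I_r$; the analogous statement holds for the $(2,1)$ block with $\Phi$. This is exactly why the definition of $\widetilde{\mathbb{L}}_1(G)$ carries the factors $\Psi^{-1}$ and $\Phi^{-1}$ in the off-diagonal Kronecker products: they are designed precisely so that $\mathbb{J}$ maps the modified-basis off-diagonal blocks onto the monomial-basis ones. I expect no genuine obstacle here; the argument is a direct transport of the scalar-polynomial result of \cite{tds} to the $2\times 2$ block setting, and the main (minor) hazard is simply getting every Kronecker mixed-product identity and every placement of $\Phi^{\pm1},\Psi^{\pm1}$ right.
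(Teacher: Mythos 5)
Your proposal is correct and follows essentially the same route as the paper's proof: verify via the Kronecker mixed-product rule that right multiplication by $\mathrm{diag}(\Phi\otimes I_n,\,\Psi\otimes I_r)$ sends the diagonal blocks into $\mathcal{L}_1(A)$ and $\mathcal{L}_1(D)$ and collapses the off-diagonal factors $\Psi^{-1},\Phi^{-1}$, then note linearity and injectivity from nonsingularity of $\Phi,\Psi$, and exhibit the explicit preimage $\mathbb{T}(\lambda)\,\mathrm{diag}(\Phi^{-1}\otimes I_n,\,\Psi^{-1}\otimes I_r)$ for surjectivity. Your write-up is in fact slightly more explicit than the paper's on the off-diagonal bookkeeping.
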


\begin{proof} 
Note that $ L(\lam) = \widetilde{L} (\lambda)  (\Phi \otimes I_n) \in \mathcal{L}_1(A)$ with right ansatz vector $v$ and $ K(\lam) = \widetilde{K}(\lambda)(\Psi \otimes I_r) \in \mathcal{L}_1(D)$ with right ansatz vector $w$. Then,
 \begin{align*}
 \widetilde{\mathbb{T}}(\lambda)
     \left[\begin{array}{c|c}
           \Phi \otimes I_n &  \\
           \hline
            &  \Psi \otimes I_r 
     \end{array} \right] 
  = \left[\begin{array}{c|c}
        \widetilde{L}(\lambda) (\Phi \otimes I_n) & -v e_{k}^{T} \otimes B  \\\hline
        we_{m}^{T}  \otimes C & \widetilde{K}(\lambda) (\Phi \otimes I_r) \\
     \end{array}\right]  \in \mathbb{L}_{1}(G).
\end{align*}
Obviously $\mathbb{J} : \widetilde{\mathbb{L}}_1(G)\longrightarrow \mathbb{L}_1(G)$ is linear and injective.
Since 
$\mathbb{T}(\lambda)
   \left[\begin{array}{c|c}
           \Phi^{-1} \otimes I_n&  \\
           \hline
            &  \Psi^{-1} \otimes I_r \end{array} \right] \in \widetilde{\mathbb{L}}_1(G)$  
and $\mathbb{J}$ maps it to $\mathbb{T}(\lambda)$ for all $\mathbb{T}(\lambda) \in \mathbb{L}_1(G),$ we conclude that $\mathbb{J}$ is onto. This completes the proof.
\end{proof}

Analogously, the map   
$\mathbb{J}: \widetilde{\mathbb{L}}_2(G)\rightarrow \mathbb{L}_2(G),~\widetilde{\mathbb{T}}(\lambda) \mapsto \left[\begin{array}{c|c}
           \Phi^T \otimes I_n&  \\
           \hline
            & \Psi^T \otimes I_r \end{array} \right] \widetilde{\mathbb{T}}(\lambda)  $
is a linear isomorphism.

Observe that 
$ \widetilde{\mathbb{T}}(\lambda) \in \widetilde{\mathbb{L}}_1(G)$ is a Rosenbrock  linearization of $G(\lambda)$ if and only if  $\mathbb{T}(\lam):=\widetilde{\mathbb{T}}(\lambda)  
    \left[\begin{array}{c|c}
           \Phi \otimes I_n&  \\
           \hline
            & \Psi \otimes I_r 
    \end{array} \right] \in \mathbb{L}_1(G)$
is a Rosenbrock linearization of $G(\lambda).$ Similarly,
$ \widetilde{\mathbb{H}}(\lambda) \in \widetilde{\mathbb{L}}_2(G)$ is a Rosenbrock linearization of $G(\lambda)$ if and only if  $\mathbb{H}(\lam) :=  
    \left[\begin{array}{c|c}
             \Phi^T \otimes I_n&  \\
             \hline
             & \Psi^T \otimes I_r 
    \end{array} \right] 
    \widetilde{\mathbb{H}}(\lambda)   \in \mathbb{L}_2(G)$ 
is a Rosenbrock linearization of $G(\lambda).$  


\begin{thebibliography}{10}


\bibitem{rafinami1} {\sc R. Alam and N. Behera}, {\em Linearizations for rational matrix functions and Rosenbrock system polynomials}, {SIAM J. Matrix Anal. Appl., 37 (2016), pp. 354-380.}

\bibitem{rafinami2} {\sc R. Alam and N. Behera}, {\em Recovery of eigenvectors of rational matrix functions from Fiedler-like linearizations}, {Linear Algebra Appl., 510(2016), 373--394.}


\bibitem{rafinami3} {\sc R. Alam and N. Behera}, {\em Generalized Fiedler pencils for Rational Matrix functions }, {SIAM J. MATRIX ANAL. APPL., 39(2018),  587--610.}


%\bibitem{admz} {\sc A. Amparan, F. Dopico, S. Marcaida, and I. Zaballa,} {\em  Strong Linearizations of Rational Matrices,}{ SIAM J. Matrix Anal. Appl., 39 (2018), pp. 1670--1700.}


\bibitem{AmpDMZ18} {\sc Amparan, A. and Dopico, F.M. and Marcaida, S. and Zaballa, I.}, {\em  Strong linearizations of rational matrices}, {SIAM J. Matrix Anal. Appl., 39 (2018), pp. 1670-1700}. 



\bibitem{AV04}{\sc E.N. Antoulas and S. Vologiannidis}, {\em A new family of companion forms of polynomial matrices,}{Electron. J. Linear Algebra, 11(2004), 78--87.}



%\bibitem{behera}{\sc N. Behera,} {\em Fiedler linearizations for LTI state-space systems and for rational eigenvalue problems,} { PhD Thesis, IIT Guwahati, 2014.}


\bibitem{behera20}{\sc N. Behera,} {\em Generalized Fiedler pencils with repetition for rational matrix functions,} { Filomat, 34, (11) (2020), 3529--3552.}


\bibitem{beh22}{\sc N. Behera,}{Fiedler linearizations for higher order state-space systems,} { JIMS, Accepted (2022).}


\bibitem{BehB22} {\sc N. Behera and A. Bist}, {\em Fiedler linearizations of multivariable statespace systems and its associated system matrix}{arXiv:2207.01324},{2022}. 


\bibitem{BBM23} {\sc Namita Behera, Avisek Bist, and Volker Mehrmann}, {\em Fiedler linearizations of Rectangular Rational Matrix Functions}{Bulletin of the Iranian Mathematical Society, 50, (2024), pp. 1-32}. 




%\bibitem{BDD}{\sc M. I. Bueno,   F. De Ter{\'a}n and  F. M. Dopico}, {\em Recovery of eigenvectors and minimal bases of matrix polynomials from generalized Fiedler linearizations,} { SIAM J. Matrix Anal. Appl., 32 (2011), pp. 463-483.}


\bibitem{DA19}{\sc Ranjan Kumar Das and Rafikul Alam}{\em Affine spaces of strong linearizations for rational matrices and the recovery of eigenvectors and minimal bases,}{ Linear Algebra and its Applications, 569, (2019), pp. 335-368}



\bibitem{TDM}{\sc F. De Ter{\'a}n, F. M. Dopico, and D. S. Mackey}, {\em Fiedler companion linearizations and the recovery of minimal indices,} { SIAM J. Matrix Anal. Appl., 31(2009/10), 2181--2204.}




\bibitem{tds} {\sc F. De Teran,  F. M. Dopico, and D. S. Mackey,} {\em Linearizations of singular matrix polynomials and the recovery of minimal indices,} { Electron. J. Linear Algebra,  18 (2009), pp. 371-402.}


\bibitem{DMQD23} {\sc F. M. Dopico, S. Marcaida, M. C. Quintana and P. V. Dooren}, {\em  Block full rank linearizations of rational matrices}, {Linear  Multilinear Algebra, 71 (2023), pp. 391-421. }



%\bibitem{DMQD22} {\sc F. M. Dopico, S. Marcaida, M. C. Quintana and P. V. Dooren}, {\em  Linearizations of matrix polynomials viewed as Rosenbrock's system matrices}, {arXiv:2211.09056, preprint, 2022.}


%\bibitem{DopMQV20} {\sc F.M. Dopico and S. Marcaida and M.C. Quintana and P. Van Dooren}, {\em Local linearizations of rational matrices with application to rational approximations of nonlinear eigenvalue problems}, { Linear Algebra and its Applications, 604 (2020), pp. 441-475. }



%\bibitem{forney} {\sc G. D. Forney, Jr.} {\em Minimal bases of rational vector spaces, with applications to multivariable linear systems,} { SIAM J.  Control, 13 (1975), pp. 493-520.}


\bibitem{gohberg82} {\sc I. Gohberg,  P. Lancaster and L. Rodman,}
{\em Matrix polynomials,}{Academic Press Inc., New York  London, 1982.}




\bibitem{hmmt}{\sc N. J. Higham, D. S. Mackey,  N.  Mackey and  F. Tisseur,} {\em  Symmetric linearizations for matrix polynomials}, { SIAM J.  Matrix Anal.  Appl., 29 (2006), pp. 143-159.}

%\bibitem{kailath} {\sc T. Kailath,} {\em Linear Systems,} {Prentice-Hall Inc., Englewood Cliffs, N.J., 1980.}



\bibitem{mmmm06}{\sc D. S. Mackey, N. Mackey, C. Mehl, and V. Mehrmann}, {\em Vector spaces of linearizations for matrix polynomials,}{ SIAM J. Matrix Anal.
Appl., 28 (2006), 971--1004}


%\bibitem{mmmm2}{\sc D. S. Mackey, N. Mackey, C. Mehl and V. Mehrmann}, {\em Structured polynomial eigenvalue problems: good vibrations from good linearizations,} {SIAM J. Matrix Anal. Appl., 28 (2006), pp. 1029-1051.}



%\bibitem{volkervoss}{\sc V. Mehrmann and H. Voss,} {\em Nonlinear eigenvalue problems: a challenge for modern eigenvalue methods,}  {GAMM Mitt. Ges. Angew. Math. Mech.,  27 (2004), pp. 121-152.}


\bibitem{PerQ22} {\sc P\'{e}rez , J. and Quintana, M.C.}, {\em Linearizations of rational matrices from general representations}, {Linear Algebra and its Applications, 647 (2022), pp. 89-126}. 


%\bibitem{planchard}{\sc J. Planchard,} {\em Eigenfrequencies of a tube bundle placed in a confined fluid,} {Comput. Methods Appl. Mech. Engrg., 30 (1982), pp. 75-93.}


\bibitem{rosenbrock70} {\sc H. H. Rosenbrock,} {\em State-space and Multivariable Theory,}
{ John Wiley \& Sons, Inc., New York, 1970.}



%\bibitem{bai11}{\sc Y. Su and Z. Bai,} {\em Solving rational eigenvalue problems via linearization,} {SIAM J. Matrix Anal. Appl., 32 (2011), pp. 201-216.}



\bibitem{vardulakis}{\sc A. I. G. Vardulakis,} {\em Linear Multivariable Control,} { John Wiley \& Sons Ltd., 1991.}



%\bibitem{van1}{\sc P. Van Dooren,} {\em The generalized eigenstructure problem in linear system theory,}{ IEEE Trans. Automat. Control., 26 (1981), pp. 111-129.}



%\bibitem{van2}{\sc P. Van Dooren and  P. Dewilde,} {\em The  eigenstructure of an arbitrary polynomial matrix: Computational aspects,}{ Linear Algebra Appl., 50 (1983), pp. 545-579.}

%\bibitem{Verg79} {\sc G.~Verghese, P.~Van~Dooren and T.~Kailath}, {\em Properties of the system   matrix of a generalized state-space system}, {Internat. J. Control, 30 (1979), pp. 235-243.}

%\bibitem{AV2}{\sc  S. Vologiannidis and E. N. Antoniou,}  {\em A permutated factors approach for the linearization of  polynomial matrices},
 {Math. Control Signals Syst., 22 (2011), pp. 317-342.}



%\bibitem{voss1}{\sc H. Voss, } {\em A rational spectral problem in fluid-solid vibration,} { Electron. Trans. Numer. Anal.,  16 (2003), pp. 93-105.}


%\bibitem{voss2}{\sc H. Voss,} {\em Iterative projection methods for computing relevant energy states of a quantum dot,} {J. Comput. Phys.,  217 (2006), pp. 824-833.}






\end{thebibliography}
\end{document}